\theoremstyle{plain}
\newtheorem{theorem}{Theorem}[section]
\newtheorem{lemma}[theorem]{Lemma}
\newtheorem{proposition}[theorem]{Proposition}
\newtheorem{corollary}[theorem]{Corollary}
\newtheorem{remark}[theorem]{Remark}
\newtheorem{definition}[theorem]{Definition}
\theoremstyle{definition}
\theoremstyle{remark}
\numberwithin{equation}{section}
\newcommand{\eps}{\varepsilon}
\newcommand{\diam}{\mathrm{diam}\,}
\newcommand{\R}{\mathbb{R}}
\newcommand{\N}{\mathbb{N}}
\newcommand{\Z}{\mathbb{Z}}
\newcommand{\El}{\mathcal{E}_{el}}
\newcommand{\Ec}{\mathcal{E}_{core}}
\newcommand{\F}{\mathcal{F}}
\newcommand{\dist}{\mathrm{dist}}
\newcommand{\supp}{\mathrm{supp}\,}
\newcommand{\Curl}{\mathrm{Curl}\,}
\newcommand{\Div}{\mathrm{Div}}
\newcommand{\weakly}{\rightharpoonup}           
\newcommand{\weakstar}{\stackrel{*}{\weakly}}   
\newcommand{\loc}{\mathrm{loc}}
\newcommand{\curl}{\mathrm{Curl}}
\newcommand{\Ha}{\mathcal{H}}
\newcommand{\LL}{\mathbin{\vrule height 1.6ex depth 0pt width
0.13ex\vrule height 0.13ex depth 0pt width 1.3ex}}
\newcommand{\mean}{-\!\!\!\!\!\!\int}
\newcommand{\harm}{\textup{harm}}
\newcommand{\res}{\mathop{\hbox{\vrule height 7pt width .5pt depth 0pt
\vrule height .5pt width 6pt depth 0pt}}\nolimits}
\title[Grain boundary energy] 
{On the Read-Shockley energy for grain boundaries in poly-crystals}
\author{Martino Fortuna}
\email{martino.fortuna@uniroma1.it}
\author{Adriana Garroni}
\email{adriana.garroni@uniroma1.it}
\author{Emanuele Spadaro}
\email{emanuele.spadaro@uniroma1.it}
\begin{document}

\begin{abstract}
	In the 50's Read and Shockley  proposed a formula for the energy of small angle grain boundaries in polycrystals based on linearised elasticity and an ansazt on the distribution of incompatibilities of the lattice at the interface. In this paper we derive a sharp interface limiting functional starting from a nonlinear semidiscrete model for dislocations proposed by Lauteri--Luckhaus in \cite{LL2}. Building upon their analysis we obtain, via $\Gamma$-convergence, an interfacial energy depending on the rotations of the grains and the relative orientation of the interface which agrees for small angle grain boundaries with the Read and Shockley logarithmic scaling.
\end{abstract}

\maketitle

\vspace{1cm}

\begin{center}
\begin{minipage}{12cm}
\small{
\tableofcontents
}
\end{minipage}
\end{center}

\vspace{.5cm}

\section{Introduction}\label{intro}



In \cite{LL2} Lauteri and Luckhaus undertake the analysis of a very general geometric functional for the description of dislocations' configurations in metal plasticity.
In fact, metals are actually crystals and their mechanic properties are strongly effected by the underlying ordered disposition of atoms. Precisely, metals exhibit a local order in regions (called \emph{grains}) where the underlying structure is closed to a rotated reference lattice. These grains are separated by boundaries and the incompatibility of the crystal lattice along these boundaries induces an elastic distortion, which in certain  scales can be modeled with an energies concentrated on these boundaries. The understanding of the dependence of this \emph{surface tension} upon the relative orientations of the grains is crucial in order to clarify the mechanical properties of poly-crystalline materials.
It is understood that the incompatibilities of the crystal lattices at the interface are resolved by means of the presence of defects, such as dislocations, whose density depends on the discrepancy at the interface.

Dislocations are, roughly speaking, the effect of locally lattice invariant slips along crystallographic planes and at the equilibrium may be identified with the lines separating the regions undergoing different slips in such crystallographic planes. Outside a neighborhood of these one-dimensional objects, at a distance comparable with the lattice spacing (the so called \emph{core regions} of the dislocations), the material is a local distortion of a perfect lattice.
For this reason, the typical models for dislocations are somehow hybrid: they show a cohexistence of continuum variables for the bulk elastic deformation and a microscopic parameter $\eps>0$ for the size of the core region of the dislocations (proportional to the lattice spacing, essentially the range where the continuum approximation fails). These kind of models are often referred to as \emph{semi-discrete models} and their validity has been confirmed by the asymptotic analysis of fully discrete models  (see, e.g, \cite{Ponsiglione,Alicandro-DeLuca-Garroni-Ponsiglione, DeLuca-Garroni-Ponsiglione,Luckhaus-Mugnai}).

\medskip

The model proposed by Lauteri and Luckhaus integrates perfectly into this framework.
They consider the problem in a two-dimensional section of a crystal $\Omega\subset \R^2$  and the state space is described by couples $(A,S)$, where $A:\Omega\to \R^{2\times 2}$ is a matrix fields and $S\subset\Omega$ is a closed set.
Here $S$ is assumed to contain the defects of the materials, whereas $A$ outside of $S$ represents the local elastic deformation of the material, and hence $A$ is locally a gradient in $\Omega\setminus S$.
In the presence of defects, $A$ is not globally a gradient and its circulation on closed loops around the connected components of $S$ should be a vector of the underlying lattice, the so called \emph{Burgers vectors}.

The energy introduced by Lauteri and Luckhaus has two components: a core energy proportional to the area of the core region (the tubular neighborhood of $S$ at distance $\lambda \eps$, with $\lambda\geq 1$ a fixed constant and $\eps>0$ representing the lattice spacing)
$$
|B_{\lambda\eps}(S)|,
$$
and an elastic energy outside the core region of the form
$$
\int_{\Omega\setminus  B_{\lambda\eps}(S)} W(A)\, dx,
$$
with $W$ a suitable energy density.

Clearly, since $S$ contains the incompatibilities of the elastic deformation, we have that $A$ is $\Curl$-free away from $S$, and moreover $A$ and $S$ are further linked by the extra-condition related to the quantization of the circulations in terms of the Burgers vectors.

\medskip
The main objective of the present paper is the derivation of a sharp interface energy for grain boundaries in dimension two starting from the Lauteri--Luckhaus' model. Roughly speaking, the energy is defined for fields which are locally constant rotations.
Given two grains $E_i,E_j\subseteq \Omega$ and the corresponding constant matrix fields $R_i,R_j\in SO(2)$,  representing the rotations of the underlying crystal structure with respect to a reference lattice, we would like to measure the  line tension of the boundary between the grains as
$$
\Phi(R_i,R_j,\nu_{i,j})\Ha^1(\Gamma_{i,j}),
$$
where  $\Gamma_{i,j}=\partial E_i\cap \partial E_j$ and $\Phi$ is the energy per unit length.

\medskip

In the last decades dislocations have been treated in mathematical terms giving rise to a large literature that deals with microscopic, mesoscopic and macroscopic scales (see .... and the references therein).
The paper of Lauteri--Luckhaus \cite{LL2} is a milestone in this theory, because for the first time the energetic regime which leads to the formation of grain boundaries is investigated without ad hoc assumptions on the geometric distribution of defects.
In \cite{LL2} the authors consider the problem of interpolating between two small symmetric rotations of angle $\pm \alpha$ imposed on the lateral sides of a square and they prove three kinds of results:
\begin{itemize}
	\item[1.] an \emph{upper bound}, consisting in the construction of a configuration matching the right boundary conditions in agreement with the \emph{Read and Shockley ansatz} on the energy of the interface (see \cite{Read-Shockley,Howe}):
	$$
	\textup{energy per unit length} \sim \eps\; \alpha\; |\log\alpha|, \qquad 0<\alpha<<1;
	$$
	\item[2.] the convergence of suitable sequences of configurations with finite energy to \emph{micro-rotations} (see Definition \ref{def-micro-rotation} below), modelling the grains with constant oriented lattices;
	\item[3.] a \emph{lower bound} of the energy matching (up to a constant) the \emph{Read and Shockley ansatz} for grain boundary between small rotations.
\end{itemize}

One of the main contributions of \cite{LL2} is the introduction of the variable $S$, which adds flexibility to the model and allows to treat scalings at which geometric necessary dislocations emerge. In doing this, the authors introduce and develop a wealth of innovative ideas and tools, beyond the existing literature on related problems with topological singularities.
It is also worth noticing that the functional proposed by Lauteri and Luckhaus has an essential geometric structure: potentially it is a lower bound that can be used as a comparison energy for several other interesting models, as optimal partition problems, image segmentations etc\ldots.

\medskip

In this paper we build upon the analysis in \cite{LL2} and complete the picture for the asymptotics of the Lauteri--Luckhaus energies, proving a full $\Gamma$-convergence result to a limiting energy of the form
\begin{equation}\label{e.gamma limite}
	\int_{J_A} \Phi \big(A^+,A^-,\nu_A\big)\, d\Ha^1,
\end{equation}
for $A$ a $SBV$-field with values in $SO(2)$ and whose derivative has only jump part ($J_A$ is the jump set,  $\nu_A$ its normal vector, and $A^+,A^-$ the traces from both sides of the jump).
In particular, we extend the results in \cite{LL2} in four directions:
\begin{itemize}
	\item[1.] we prove a \emph{compactness} result in the natural topology for sequences of fields with bounded energies (whereas the compactness of Lauteri--Luckhaus was valid only for distinguished competitors);
	\item[2.] we \emph{extend the model} incorporating  the anisotropy of the underlying crystalline structure;
	\item[3.] we find an \emph{asymptotic formula} for the energy density $\Phi$, showing that the lower and upper bound in \cite{LL2} can be upgraded into an interfacial energy;
	\item[4.] we find an \emph{improved construction for the upper bound} which depends on the relative orientations of the grains (not necessarily symmetric with respect to the interface), matching the Read--Shockley logarithmic scaling. Moreover, we are able to treat the case of interfaces between two incompatible lattices as studied by Van der Merwe \cite{VdM}.
\end{itemize}

In order to show such results, we need to use the entire machinary introduced and developed by Lauteri and Luckhaus, most notably their \emph{logaritmic estimate} (see Theorem \ref{t.LL} below) and the set of ideas in the modification of the matrix field $A$ in order to achieve good bounds and compactness properties.
Concerning this last point, for our asymptotic analysis it is necessary to revisit some of the constructions done in \cite{LL2}, although we do not introduce any essentially new ideas.

The proof of the $\Gamma$-convergence, passes through an appropriate cell problem formula and the analysis of the thermodynamic limit for the rescaled functionals.  A crucial step in this strategy is to show that the thermodynamic limit actually gives the optimal energy per unit length of a straight interface. In doing this we prove that optimal sequences concentrates around the interface and exhibit a sort of equi-distribution of defects (in the spirit of \cite{Alberti-Choksi-Otto, Spadaro} in agreement with the ideas behind the explicit upper bound  constructions and the computations in \cite{Read-Shockley,VdM,DeLuca-Ponsiglione-Spadaro}).


\section{Setting and main result}

\subsection{Parameters and classes of competitors}
In the sequel $\Omega\subset\R^2$ is a  bounded open set which represents a (possibly defected) material body, and $\lambda,\tau>0$ are two fixed parameters of the energy functionals.
For every $\eps>0$ we define the class of admissible pairs $(A,S)\in\mathcal{C}_\eps(\Omega)$, where $A\in L^1(\Omega,\R^{2\times 2})$ and $S\subset \Omega$ is a relatively closed set satisfying the following conditions:
\begin{itemize}
	\item[(H1)] $S\supseteq \supp(\Curl A)$;
	\item[(H2)] 
	for every simple rectifiable closed curve
	$\gamma$ in $\Omega\setminus S$
	one has that
	\begin{align*}
		\int_{\gamma} A
		\in  \tau\eps \Z^2.
	\end{align*}
\end{itemize}

For a continuous field $A$ and a rectifiable curve $\gamma:[0,1]\to \Omega$ the line integral $\int_\gamma A$ is
\[
\int_\gamma A := \int_0^1 A(\gamma(t))\,\dot\gamma(t) \, dt,
\]
whereas for $A\in L^1(\Omega;\R^{2\times 2})$ one should understand it in the weak sense:
\begin{equation}\label{eq-def-circuitazione}
	\int_\gamma A: = -\textup{sgn}(\gamma)\int_\Omega A(x)\, \nabla^\perp \varphi(x) \, dx,
\end{equation}
with $\varphi\in C_c^\infty(\Omega)$ any function such that
\[
\varphi=1\quad \textup{on}\quad D\qquad \textup{and}\qquad \big({\rm supp }\,\varphi\big)\setminus D\subset \Omega\setminus S,
\]
where $D\subset \Omega$ is the open set such that $\gamma=\partial D$ and $\textup{sgn}(\gamma) = \pm 1$ according to the orientation of $\gamma$ (anticlockwise $+1$, clokwise $-1$).
Recall that
\[
\nabla^\perp \varphi := J\nabla \varphi,\qquad
J:= \begin{pmatrix}
	0 & 1 \\
	-1 & 0
\end{pmatrix}.
\]
This definition reduces to the classical circulation if $A$ is smooth, and it is well posed, since the right hand side of \eqref{eq-def-circuitazione} does not depend on the precise choice of $\varphi$: indeed, if we take a second admissible test $\psi\in C^\infty(\Omega)$ with $\psi=1$ on $D$, and
$$({\rm supp}\,\psi)\setminus D\subseteq \Omega\setminus S,$$
then $\varphi-\psi\in C_c^\infty(\Omega)$ and $${\rm supp}(\varphi-\psi)\subseteq \Omega\setminus \supp(\Curl A),$$ and hence $\int A\,\nabla^\perp(\varphi-\psi)= 0$.

	In this model $A$ represents a field that locally (outside the incompatibilities contained in the set $S$) is a gradient of a function that maps the material body $\Omega$ in a reference configuration which is given by a perfect crystal, here the square lattice $\eps\tau\Z^2$. This property is expressed by the circulation condition (H2). A different Bravais lattice (e.g., triangular lattice) can be also considered without changing our analysis.

	\begin{remark}\label{rem-invariance-rotation0}
	 In view of the above interpretation for $A$, we remark that a change of variable in the configuration $\Omega$ should not change the circulation. Therefore, changing the domain $\Omega$ with a rotation  $R\Omega$, we must change the field accordingly. Namelly,  $A_R: R\Omega\to\R^{2\times2}$ given by $A_R(y)=A(R^{-1}y)R^{-1}$ (which is clearly what happens if $A=\nabla u$ and $A_R=\nabla u_R$ with $u_R(y)=u(R^{-1}y)$). Condition (H2) does not change, indeed
	 	\begin{align*}
	 	\int_{\gamma} A
	 =\int_{R\gamma} A_R.
	 \end{align*}

\end{remark}

\subsection{The energy functional}
For any  $(A,S)\in \mathcal{C}_\eps(\Omega)$ and for any open subset $\Omega'\subset \Omega$, we define
\begin{itemize}
	\item the {\bf elastic energy}
	$$
	\El^\eps(A,S;\Omega'):= \int_{\Omega'\setminus B_{\lambda\eps}(S)} \dist^2(A(x), SO(2))\, dx,
	$$
	\item the {\bf core energy}
	$$
	\Ec^\eps(S;\Omega'):=|B_{\lambda\eps}(S)\cap \Omega'|\,,
	$$
\end{itemize}
where $|E|$ is the $2$-dimensional Lebesgue measure and $B_{\lambda\eps}(E)$ is the $\lambda\eps$-neighborhood in $\Omega$ of any subset $E\subset\Omega$,
$$
B_{\lambda\eps}(E):=\Big\{x\in \Omega : \ \dist(x, E)< \lambda\eps \Big\}.
$$
The energy functional introduced by Lauteri--Luckhaus in \cite{LL2} is given by the sum of the two terms:
\begin{equation}\label{energy}
	\F_\eps(A,S;\Omega'):=
	\El^\eps(A,S;\Omega')+\Ec^\eps(S;\Omega').
\end{equation}
The energy of a matrix valued field $A\in L^1(\Omega;\R^{2\times 2})$ is then defined by optimazing in the set $S\supset \supp(\Curl A)$ and rescaling in $\eps$ the Lauteri--Luckhaus energy:
$$
F_\eps(A;\Omega'):=
\frac{1}{\eps}\inf\Big\{\F_\eps(A,S;\Omega')\,:\,
(A,S)\in \mathcal{C}_\eps(\Omega)\Big\}.
$$
If $\Omega'=\Omega$, we drop the dependence on the set in the notation.
Our model deviates from the one used in \cite{LL2}, because  of condition (H2) for the circulation of the admissible  fields $A$, which makes the model accounting for the anisotropy due to the underlying crystalline structure.

\begin{remark}\label{r.S}
In what follows, for every $A\in L^1(\Omega;\R^{2\times 2})$ we will denote with $S_A$ any relatively closed set of $\Omega$ such that
\[
\frac{1}{\eps}\F_\eps(A,S_A)\leq F_\eps(A) +\eps.
\]
Clearly, $S_A$ is not uniquely determined.
\end{remark}

\begin{remark}\label{r-nucleo}
	The energy $\F_\eps$ does not depend on the values of the field $A$ in the $\lambda\eps$-neighbor{-}hood of the set $S$. Indeed, for every $u\in W_0^{1,1}(B_{\lambda \eps}(S))$, $(A+ \nabla u, S)\in \mathcal{C}_\eps(\Omega)$ and its energy remains unchanged
	\[
	\F_\eps(A+\nabla u, S) = \F_\eps(A, S).
	\]
\end{remark}

\begin{remark}\label{rem-invariance-rotation}
	The energy $F_\eps$ is rotationally invariant, while condition (H2) (in view of Remark \ref{rem-invariance-rotation0}) for the circulation of the admissible competitor is invariant under right multiplication by rotations.
\end{remark}

\subsection{Main results}
We aim at proving a $\Gamma$-convergence result for the functionals $F_\eps$, thus deriving an interfacial energy for grain boundaries which accounts for the presence of crystal defects that may resolve incompatibilities. In this limiting model the polycrystalline configurations will be represented by fields $A$ that takes values in $SO(2)$, called \emph{micro-rotations} (see the definition below), which describe the locally constant orientation of the underlying lattice.

\begin{definition}\label{def-micro-rotation}
	We say that  a matrix field $A\in SBV(\Omega,SO(2))$ is a \emph{micro-rotation}
	if
	$$
	DA^i= \left(A^{i,+}- A^{i,-}\right)\otimes \nu_A \Ha^{1}\LL J_A,\qquad i=1,2,
	$$
	where $J_A$ is the jump set of $A$, $A^1$ and $A^2$ the rows of $A$, and $\nu_A$ the normal vector to $J_A$ (pointing towards the value $A^+$).
\end{definition}

Observe that a micro-rotation is not necessarily a piecewise constant field on a \emph{finite} Caccioppoli partition.
Nevertheless it can be proved (see, e.g., \cite{Bellettini-Chambolle-Goldman}) that a micro-rotation can be approximated in energy by  polyhedral partitions (i.e., fields which are piecewise constant on finite Caccioppoli partitions of subdomains with polyehdral boundaries).

\medskip

The main results of the paper are the following.

\begin{theorem}\label{t.main}
	For any sequence $(A_\eps)_{\eps>0}\subset L^1(\Omega;\R^{2\times2})$ such that
	\[
	\limsup_{\eps\to 0^+} F_\eps(A_\eps)<+\infty,
	\]
	there exist a subsequence (not relabeled) and
	a micro-rotation $A$ such that
	\begin{equation}\label{eq-conv}
		A_\eps \chi_{\Omega\setminus B_{\lambda\eps}(S_{A_\eps})}\to A \qquad \textup{in }\quad L^1(\Omega).
	\end{equation}
	Moreover, the functionals $F_\eps$ $\Gamma$-converge with respect to the  topology induced by the convergence in \eqref{eq-conv} to
	\begin{equation}\label{eq-limit0}
		F_0(A)=\begin{cases}\displaystyle{
				\int_{J_A}\Phi(A^+,A^-,\nu_A) \,d\Ha^{1}},\qquad & \hbox{ if $A$ is a microtation},\\
			+\infty & otherwise,
		\end{cases}
	\end{equation}
	for a suitable continuous energy density $\Phi:SO(2)\times SO(2)\times \mathbb S^1\to [0,+\infty)$  satisfying for all $R^-,R^+\in SO(2)$, and  $n\in \mathbb{S}^1$ the bounds
	\begin{equation}\label{eq-log-bounds0}
		C_1|R^--R^+| |\log|R^--R^+||\leq \Phi(R^-,R^+,n)\leq C_2|R^--R^+|(|\log|R^--R^+||+1),
	\end{equation}
	with $C_1,C_2>0$ dimensional constants.
\end{theorem}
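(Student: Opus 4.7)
The plan is to follow the standard blow-up/cell-problem strategy for deriving a surface energy via $\Gamma$-convergence, using the logarithmic estimate of Lauteri--Luckhaus (Theorem \ref{t.LL}) as the key compactness and lower bound tool. First I would define
\[
\Phi(R^+,R^-,n):=\lim_{T\to\infty}\frac{1}{T}\inf\Big\{F_1(A;Q_T^n)\,:\,A=R^\pm \text{ on the $\pm n$-side of }\partial Q_T^n\Big\},
\]
where $Q_T^n$ is the square of side $T$ with one pair of faces of normal $n$, and the infimum is over admissible pairs in $\mathcal{C}_1(Q_T^n)$ satisfying the indicated Dirichlet datum in a weak sense. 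Existence and well-posedness of this limit follow from a Fekete-type sub-additivity argument: two competitors on adjacent rectangles can be glued along a $\lambda$-neighbourhood of the common face by adding a thin core contribution whose cost is linear in the length of the face. Rescaling and invariance under right multiplication by rotations (Remark \ref{rem-invariance-rotation}) yield the symmetries of $\Phi$.

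For the compactness assertion, from $\limsup_\eps F_\eps(A_\eps)<\infty$ and the definition of $S_{A_\eps}$ one has $|B_{\lambda\eps}(S_{A_\eps})|=O(\eps)$ and $\int_{\Omega\setminus B_{\lambda\eps}(S_{A_\eps})}\dist^2(A_\eps,SO(2))\,dx=O(\eps)$. I would then apply the Lauteri--Luckhaus modification/truncation to replace $A_\eps\chi_{\Omega\setminus B_{\lambda\eps}(S_{A_\eps})}$ by a field $\tilde A_\eps$ with values in $SO(2)$ which differs from the original in $L^1$ by a vanishing remainder and whose $SBV$-total variation is controlled by the energy via the quantisation condition (H2) at scale $\tau\eps$. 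Ambrosio's $SBV$-compactness, combined with the closure of $SO(2)$, produces a limit $A\in SBV(\Omega;SO(2))$. That $A$ is actually a micro-rotation, i.e.\ that $DA$ has no absolutely continuous nor Cantor part, comes from the logarithmic estimate of Theorem \ref{t.LL}: any genuine continuous rotational distortion on a region of positive area would, together with quantisation, force an energy blow-up incompatible with the uniform bound.

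The liminf inequality is carried out by localisation and blow-up. Introducing the positive Radon measures $\mu_\eps:=F_\eps(A_\eps;\cdot)$ and extracting a weak-$*$ limit $\mu$, the task is to show $\mu\geq \Phi(A^+,A^-,\nu_A)\Ha^1\res J_A$. For $\Ha^1$-a.e.\ $x_0\in J_A$ I would pick a sequence of cubes $Q_{T_\eps \eps}^{\nu_A(x_0)}(x_0)$ with $T_\eps\to\infty$ slowly, rescale by $\eps$, and use the Lauteri--Luckhaus adjustment to replace the boundary trace of the rescaled field by the constant values $A^\pm(x_0)$ on the two sides at a cost that is $o(T_\eps)$; the cell problem then yields $\mu(\{x_0\})/\Ha^1(J_A\cap B_r(x_0))\geq \Phi(A^+(x_0),A^-(x_0),\nu_A(x_0))$ by Besicovitch differentiation. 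The same blow-up scheme, applied to competitors for $\Phi$ with varying parameters, together with the lower bound in \eqref{eq-log-bounds0} (immediate from Theorem \ref{t.LL}) and the upper bound (item 4 in the introduction), gives continuity of $\Phi$ on $SO(2)\times SO(2)\times\mathbb{S}^1$.

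For the limsup I would first handle a polyhedral micro-rotation: on each open segment of $J_A$ pick a nearly optimal competitor for $\Phi(A^+,A^-,\nu_A)$ given by the cell problem, truncate it to a thin strip of width $\eps T_\eps$ around the segment so that outside the strip the field equals the constant rotation on that grain, and patch the strips at vertices of the polyhedral partition through small buffer squares whose core cost is $o(1)$. A diagonal argument then extends the recovery sequence to arbitrary micro-rotations by the polyhedral density result of \cite{Bellettini-Chambolle-Goldman}, provided the upper bound in \eqref{eq-log-bounds0} holds — which is exactly the improved asymmetric Read--Shockley/Van der Merwe construction emphasised as item 4 in the introduction. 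Producing this construction while satisfying the exact quantisation (H2) with Burgers vectors in $\tau\eps\Z^2$, for an arbitrary pair $(R^+,R^-)$ and arbitrary normal $n$, is the main technical obstacle, as one must tune the spacing and orientation of a discrete dislocation wall to match prescribed rotations not necessarily symmetric about the interface, and verify that the induced circulations are genuinely lattice-valued rather than merely close to lattice vectors.
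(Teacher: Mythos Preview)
Your overall architecture---thermodynamic cell formula, compactness, blow-up liminf, polyhedral-density limsup---matches the paper. However, the compactness step as you describe it has a real gap.

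You propose to modify $A_\eps\chi_{\Omega\setminus B_{\lambda\eps}(S_{A_\eps})}$ to an $SO(2)$-valued field whose ``$SBV$-total variation is controlled by the energy via the quantisation condition (H2)'' and then invoke Ambrosio's $SBV$-compactness. But (H2) by itself gives no such bound: quantisation constrains circulations around connected components of $S$, not the oscillation of $A$. The paper instead first modifies $A_\eps$ (Lemmas \ref{l.Linfty}--\ref{l.armonica}) to obtain a bounded field with $|\Curl A''_\eps|\leq C\eps^{-1}\chi_{S''_\eps}$, so that $|\Curl A''_\eps|(\Omega)\leq C$ uniformly. The decisive ingredient you are missing is then the \emph{rigidity estimate for incompatible fields} of M\"uller--Scardia--Zeppieri (Theorem \ref{rigidity2}): on every cube $Q$ there is $R_Q\in SO(2)$ with
\[
\|A''_\eps-R_Q\|_{L^2(Q)}\leq C\big(\|\dist(A''_\eps,SO(2))\|_{L^2(Q)}+|\Curl A''_\eps|(Q)\big).
\]
This is what converts the energy bound into a $BV$-type oscillation bound (via the abstract Lemma \ref{abstract1}/Corollary \ref{abstract2}) and yields strong $L^2$-compactness to a limit $A\in BV(\Omega;SO(2))$ with $|DA|\leq C|\Curl A|$. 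Only after this does the logarithmic estimate (Theorem \ref{t.LL}) enter, to show that $|\Curl A|$ is concentrated on a $\sigma$-finite $\Ha^1$-set, whence by the $BV$ structure theorem $DA$ is purely jump. Ambrosio's $SBV$-compactness is never used.

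A second, smaller point: in the liminf you place the ``replace boundary traces by $A^\pm(x_0)$'' step inside the blow-up. The paper organises this differently and the distinction matters. It defines two cell quantities, $\psi_0$ (minimal asymptotic energy with only $L^1$-convergence to $I_{R^-,R^+}$, no boundary condition) and $\psi_\infty$ (the thermodynamic limit with Dirichlet data). The blow-up gives $\geq\psi_0$ directly, with no boundary modification. The identification $\psi_0=\psi_\infty$ is a separate, nontrivial result (Theorem \ref{th-main-cell-problem}, Corollary \ref{cor-main-cell}) whose proof requires (i) an equidistribution/energy-concentration argument (Proposition \ref{prop-cell-formula-properties}) showing that optimal sequences for $\psi_0$ have vanishing energy away from the interface, and (ii) a ``clearing-out'' construction (Proposition \ref{th-changing-bc}, Lemma \ref{lemma-locally-elastic-connection-eps}) that replaces $A_\eps$ by the constant rotation on each half while keeping the circulation condition (H2)---cut-offs are forbidden by the curl constraint, so one must build an explicit elastic interpolation with quantised incompatibilities. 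Your sentence ``at a cost that is $o(T_\eps)$'' hides precisely this construction.
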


\begin{remark}
	The convergence in \eqref{eq-conv} cannot be upgraded to a $L^1$ convergence for the matrix field $A_\eps$. Indeed, as noticed in Remark \ref{r-nucleo}, the energy $F_\eps$ remains unchanged if we pass to $A'_\eps:=A_\eps + \nabla u_\eps$, with $u_\eps \in C^\infty_c(B_{\lambda\eps}(S))$, arbitrary (note that, since $\Curl A'_\eps = \Curl A_\eps$, the circulations of $A'_\eps$ remain the same).
\end{remark}

\begin{remark}
	We remark that the above result is also true under more general assumptions for the energy $\F_\eps$. In particular we can replace the elastic energy $\El^\eps$ defined above with
		$$
	\El^\eps(A,S;\Omega'):= \int_{\Omega'\setminus B_{\lambda\eps}(S)} W(A(x))\, dx,
	$$
where the energy density $W$ satisfies the following properties:
\begin{itemize}
	\item[i)]  $W(AR)=W(A)$ for all $A\in \R^{2\times 2}$ and for all $R\in SO(2)$;
	\item[ii)] There are constants $c_1,c_2>0$ such that
	$$
	c_1 \dist(A,SO(2))\leq W(A)\leq c_2 \dist(A,SO(2))\qquad \forall A\in \R^{2\times 2}.
	$$
\end{itemize}
	Clearly with this choice the limiting grain boundary energy  density $\Phi$ will depend on $W$.
	Note that condition i) expresses the invariance of the model under change of variable for the material body represented by the domain $\Omega$ (see also Remark \ref{rem-invariance-rotation}).
\end{remark}

\subsection{Scheme of the proof and structure of the paper}
The proof of Theorem \ref{t.main} exploits the many deep ideas introduced by Lauteri and Luckhaus in \cite{LL2}. The key point of our proof is the asymptotic analysis of a cell-problem formula, which allows to characterize the $\Gamma$-limit of the generalised Lauteri-Luckhaus energies.

The proof of Theorem \ref{t.main} is divided in different sections and is made of several steps.
In Section 3 and 4 we prove the compactness part of the statement. In particular, the first section is devoted to a detailed revision of the refinement strategy proposed  in \cite{LL2}, making sure that the resulting field $A$ remains close in $L^1$ to the original one, with local estimates in energy.
In Section 4 we prove the compactness, first in $BV$, by using the rigidity of incompatible fields by M\"uller-Scardia-Zeppieri \cite{MSZ} and then in $SBV$ deducing the structure of micro-rotation by the optimal lower bound  estimate by Lauteri-Luckhaus \cite{LL2}.

Section 5 is devoted to the cell-problem formula.
First by means of a classical monotonicity argument we show that existence of the thermodynamic limit of the optimal interfacial energy  with fixed boundary conditions.
Then we show, with a construction inspired to the one proposed by Lauteri and Luckhaus in the case of symmetric small angle grain boundary, an upper bound for this thermodynamic limit for any pair of rotations $R^-$ and $R^+$.

Section 6 is devoted to show that the thermodynamic limit agrees with the $\Gamma$-liminf of a straight interface. This requires to show that optimal sequences can be modified in order to achieve the boundary conditions in the cell problem formula.
Here the main difficulty is due to the curl-constraint. Indeed, the use of cut-off functions is not allowed. We overcome this obstruction showing that incompatibilities concentrate along the interface and interpolating the boundary conditions with an explicit construction that uses the rigidity estimates for incompatible fields.

Finally, in Section 7 we show the $\Gamma$-convergence result. The cell-problem formula is the basis for the $\Gamma$-liminf inequality, via a blowup argument, and for the $\Gamma$-limsup inequality, via a density argument.

In two appendices, we recall some simple covering arguments and the Whitney-extension theorem needed in the first part of the paper.

\section{Asymptotically equivalent matrix fields}\label{s.refinement}
We start with the modification procedure of compatible matrix fields $A$ made by Luckhaus-Lauteri \cite{LL2}. We do it in the following three lemmas, producing first bounded fields with controlled Curl, and then harmonic fields.
In what follows $C>0$ will denote constants that may change from line to line and can depend on the parameters $\tau, \lambda$ of the energy. Moreover, $\Omega\subset\R^2$ is a bounded connected open set and for every $\sigma>0$ we set $$\Omega_{\sigma}:= \big\{x\in \Omega : \dist(x,\partial\Omega)>\sigma\big\}.$$

\begin{lemma}\label{l.Linfty}
	There exists a constant $C>0$ such that for every open set $\Omega_0\subset\subset\Omega$, if $0<\eps<C^{-1}\dist(\Omega_0, \partial \Omega)$ and $(A,S)\in \mathcal{C}_\eps(\Omega)$, then there exists $(A', S')\in \mathcal{C}_\eps(\Omega_0)$ satisfying
	\begin{itemize}
		\item[i)] $\Curl A'=\Curl A$ in $\Omega_0$, 
		\item[ii)] $\|A'\|_{L^\infty(\Omega_0\setminus S')}\leq C$,
		\item[iii)] $\|A'-A\|_{L^2(\Omega_0\setminus B_{\lambda\eps}(S'))}^2+ \|A'-A\|_{L^1(\Omega_0)}+ \F_\eps(A', S';\Omega_0) \leq C \F_\eps(A,S)$.
	\end{itemize}
\end{lemma}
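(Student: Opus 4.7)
\emph{Proof plan.} I partition $\Omega_0$ by a regular grid $\mathcal G$ of cubes of side $\lambda\eps$ (with $\eps$ small enough that every cube meeting $\Omega_0$ lies in $\Omega$) and classify each cube $Q\in\mathcal G$ as: \emph{singular} (if $Q$ meets $\overline{B_{\lambda\eps}(S)}$), \emph{bad elastic} (if not singular but $\int_Q\dist^2(A,SO(2))\,dx>(\lambda\eps)^2$), or \emph{good}. The total area of singular cubes is controlled by $C\,\Ec^\eps(S)$ (since they fit in an $O(\lambda\eps)$-enlargement of $B_{\lambda\eps}(S)$), that of bad elastic cubes by $\El^\eps(A,S)$, so together they occupy a set of area at most $C\F_\eps(A,S)$.

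On each good cube $Q$ one has $Q\cap S=\emptyset$, so $\Curl A=0$ on $Q$ and the Friesecke--James--M\"uller rigidity estimate provides a rotation $R_Q\in SO(2)$ with
\[
\|A-R_Q\|_{L^2(Q)}^2\leq C\int_Q\dist^2(A,SO(2))\,dx\leq C(\lambda\eps)^2.
\]
Writing $A-R_Q=\nabla v_Q$ on $Q$, I apply to $v_Q$ an Acerbi--Fusco Lipschitz truncation at a large level $M$, tuned so as to preserve the boundary trace $v_Q|_{\partial Q}$: this yields $\tilde v_Q$ with $|\nabla\tilde v_Q|\leq M$ a.e.\ and $|N_Q|\leq CM^{-2}\int_Q\dist^2(A,SO(2))\,dx$, where $N_Q:=\{\tilde v_Q\neq v_Q\}\cap Q$. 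I set $A':=R_Q+\nabla\tilde v_Q$ on good cubes, $A':=A$ on bad and singular cubes, and
\[
S':=\bigl(S\cup\overline{\bigcup_{Q\,\text{bad or singular}}Q}\cup\overline{\bigcup_{Q\,\text{good}}N_Q}\bigr)\cap\Omega_0.
\]

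Because the truncation preserves the trace on $\partial Q$, $A'$ matches $A$ in a neighbourhood of every cube face; together with $A'-A=\nabla(\tilde v_Q-v_Q)$ in the interior of each good cube, this shows that $A'-A$ is a gradient supported in $S'$, whence $\Curl A'=\Curl A$ in $\Omega_0$ and the circulation condition (H2) is inherited, proving (i). On $\Omega_0\setminus S'$ one has $|A'|\leq|R_Q|+|\nabla\tilde v_Q|\leq\sqrt{2}+M$, proving (ii). For (iii): since $A'=A$ on $\Omega_0\setminus S'$, the term $\|A'-A\|_{L^2(\Omega_0\setminus B_{\lambda\eps}(S'))}$ vanishes; a Cauchy--Schwarz bound on each $N_Q$ gives $\|A'-A\|_{L^1(\Omega_0)}\leq CM^{-1}\El^\eps(A,S)$; and the core contribution $|B_{\lambda\eps}(S')\cap\Omega_0|$ is bounded by $C\Ec^\eps+C\El^\eps+CM^{-2}\El^\eps\leq C\F_\eps(A,S)$, while $\El^\eps(A',S';\Omega_0)\leq\El^\eps(A,S)$ because $A'=A$ on $\Omega_0\setminus B_{\lambda\eps}(S')$.

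The main technical obstacle is the boundary matching of the Lipschitz truncation on each good cube: a naive truncation via the Hardy--Littlewood maximal function would alter the trace $v_Q|_{\partial Q}$ and introduce spurious $\Curl$ measures concentrated on cube faces, ruining condition (i). I resolve this by performing the truncation on a concentric sub-cube and interpolating to $v_Q$ via a smooth cut-off in the annular region (a standard but delicate construction that respects the above quantitative bounds up to adjusting constants), or equivalently by a partition-of-unity argument refining the grid.
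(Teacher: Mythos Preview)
Your overall strategy—classify cubes, use rigidity on the good ones, and truncate—is reasonable in spirit, but the step you yourself flag as ``the main technical obstacle'' is a genuine gap, and your proposed resolution does not close it.

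The cut-off interpolation $\tilde v_Q=\varphi\,\hat v_Q+(1-\varphi)\,v_Q$ does force $A'=A$ near $\partial Q$ and hence preserves the curl across cube faces, but in the annular transition layer you get
\[
\nabla\tilde v_Q=\varphi\,\nabla\hat v_Q+(1-\varphi)\,\nabla v_Q+\nabla\varphi\,(\hat v_Q-v_Q),
\]
and the term $(1-\varphi)\nabla v_Q=(1-\varphi)(A-R_Q)$ is \emph{not} bounded. So the annular layer must be placed in $S'$. But every good cube carries such a layer, and there are $\sim|\Omega_0|/(\lambda\eps)^2$ good cubes; the total area of the layers (and a fortiori of their $\lambda\eps$-neighborhood) is of order $|\Omega_0|$, not $C\,\F_\eps(A,S)$. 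The same objection kills the inclusion of the truncation sets $N_Q$ in $S'$: each $N_Q$ can be nonempty even when $\int_Q\dist^2(A,SO(2))$ is tiny, so $|B_{\lambda\eps}(\bigcup N_Q)|$ is not controlled by $\sum|N_Q|$.

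What is missing is a mechanism that distinguishes the ``thick'' part of the bad set (pieces containing balls of radius $\gtrsim\eps$, whose $\eps$-neighborhood is comparable in measure by a Vitali argument and can safely be absorbed into $S'$) from its ``thin'' part (where the complement is $\eps$-close, so a Whitney extension of a local potential from the complement yields a bounded, globally curl-free modification). The paper implements exactly this: it works with the super-level set $E$ of the maximal function of $\dist^2(A,SO(2))$, puts only the union $E'$ of $\eps$-balls contained in $E$ (together with an enlargement of $S$) into $S'$, and on the thin remainder performs a single global Whitney-type extension of the potential. Because the extension is a gradient by construction, no interface curl is created, and because $E'$ is a union of $\eps$-balls, $|B_\eps(E')|\le C|E|\le C\,\El^\eps(A,S)$. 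A cube-by-cube Lipschitz truncation cannot reproduce this without essentially rebuilding that global argument.
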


\begin{proof}
	We first show that for every $\eta>0$ there exists a pair $(A_\eta,S_\eta)\in \mathcal{C}_\eps(\Omega_1)$ with $A_\eta\in C^\infty(\Omega_1;\R^{2\times 2})$, for some $\Omega_0\subset\subset\Omega_1 \subset\subset\Omega$, such that
		\begin{equation}\label{prop-regularization}
			\|A-A_\eta\|_{L^1(\Omega_1)}+\|A-A_\eta\|_{L^2(\Omega_1\setminus S_\eta)}^2+\left|\mathcal F_\eps(A,S;\Omega_1)-\mathcal F_\eps(A_\eta,S_\eta;\Omega_1) \right|\leq \eta.
		\end{equation}
Indeed we can regularise $A$ by convolution and set
		\[
A_\eta(x):= (A\chi_{\Omega_1})\star \rho_\delta(x),\qquad S_\eta:= \overline{B_{\delta}(S)}\cap\Omega_1,
\]
where $\rho_\delta$ is a standard radial symmetric convolution kernel with support in $B_\delta$.
By the continuity of the convolution it is easy to see that for  $\delta$ small enough \eqref{prop-regularization} is satisfied and by the linearity of the convolution $(A_\eta,S_\eta)\in \mathcal{C}_\eps(\Omega_1)$.
With little abuse of notation we therefore assume that $A$ is continuous.

	We consider the sets
	\begin{gather*}
		E:=\left\{x \in \Omega_{3\eps} : \;\sup_{r\in (0,3\eps)}
		\frac{1}{|B_r|}\int_{B_r(x)\setminus B_{\lambda \eps}(S) }\dist^2(A(y), SO(2)) \, dy> 2\right\},\\
		E' := \left\{ \bigcup_{x\in \Omega_{3\eps}} B_\rho (x) : B_\rho(x)\subset E, \; \rho \geq \eps \right\}\subset E,\\
		\tilde\Omega:= \Big\{x \in \Omega_0\cap E: \dist(x, E') >6\eps, \; \dist(x, S)> (\lambda +6) \eps\Big\}\subset E.
	\end{gather*}
	These three sets are open, because $E$ is the strict super-level set of a lower semi-continuous function.
	Moreover, by the maximal function estimate (see, e.g., \cite{Stein}) we have that
	\begin{align}\label{eq-misura-E}
		|E| &\leq
		C\int_{\Omega_{3\eps}\setminus B_{\lambda \eps}(S)}\dist^2(A(x), SO(2))\;dx \leq C \El^\eps(A,S).
	\end{align}
We set $W:= E\setminus\overline E'$ and redefine  $A$ in $U:=W\cap \tilde\Omega$ by a Whitney-type extension lemma, as a consequence of the following three facts:
\begin{itemize}
\item[(W1)] $B_\eps(x) \setminus W\supset B_{\eps}(x) \setminus E \neq \emptyset$ for every $x\in U$, because otherwise $B_{\eps}(x)\subset E$ would imply $x\in E'$, against $U\cap E'=\emptyset$,
\item[(W2)] $\Curl A =0$ in the sense of distributions in $B_{6\eps}(U)$, because $\dist(x,S)>6\eps$ for every $x\in U$,
\item[(W3)] for every $x\in B_{6\eps}(U)\setminus W$ we have that $x\notin E'$ and $x\notin E\setminus\overline E'$, so that $x\not \in E$ and $B_{3\eps}(x)\cap B_{\lambda\eps}(S)=\emptyset$, thus for every $r\in (0,3\eps)$ we have
\begin{align*}
\mean_{B_r(x)} |A(y)| \,dy &\leq
\frac{1}{|B_r|}\int_{B_r(x)} \dist(A(y),SO(2)) dy + \sqrt{2}\\
&\leq \left( \frac{1}{|B_r|}\int_{B_r(x)\setminus B_{\lambda\eps}(S) }\dist^2(A(y),SO(2)) dy \right)^{\frac12} +\sqrt{2}\leq 2\sqrt{2}.
\end{align*}
\end{itemize}
	From (W2) for every $x\in U$ we find a potential $u$ in $B_{6\eps}(x)$ such that $\nabla u = A$; from (W1) we know that the complementary set of $W$ intersects $B_{\eps}(x)$ and $u\vert_{B_{6\eps}(x) \setminus W}$ is Lipschitz continuous by (W3). Then, the Whitney-extension of $u\vert_{B_{6\eps}(x)\setminus W}$ to the whole $B_{\eps}(x)$ (suitably patched via a partion of unity) will provide a new field
	$$
	A':\Omega_0\to \R^{2\times 2}, \qquad \big\{A'\neq A\big\} \subset B_\eps(U)\subset E,
	$$ with the following properties (the details of the Whitney-type extension argument are given in Lemma \ref{lemma-extension}):
	\begin{gather}
		\Curl (A'-A) = 0 \qquad \textup{in } \Omega_0\qquad \textup{and}\qquad
		|A'(x)|\leq C \qquad \forall\; x\in B_{\eps}(U)\label{e.limitatezza prima modifica}.
	\end{gather}
	Conclusion i) is then verified.
	Moreover, note that
	\begin{align}\label{e.distanza L1 prima}
		\|A'-A\|_{L^2(\Omega_0)}^2 &=
		\int_{B_\eps(U)} |A'(x) - A(x)|^2 dx\notag\\
		&\leq C\int_{B_\eps(U)} |A(x)|^2\, dx + C\|A'\|_{L^\infty(B_\eps(U))} |B_\eps(U)|\notag\\
		& \leq C\int_{\Omega\setminus B_{\lambda\eps}(S)} \dist^2(A(x),SO(2))\, dx + C(\|A'\|_{L^\infty(B_\eps(U))}+1) \, |U|\notag\\
		& \stackrel{\eqref{eq-misura-E}}{\leq} C\El^\eps(A,S).
	\end{align}
	We then set
	\begin{equation}\label{eq-S-tilde}
		S':= \overline{B_{(\lambda +6)\eps}(S)\cup B_\eps(E')}\cap \Omega_0.
	\end{equation}
	Note that
	$(A',S')\in \mathcal{C}_\eps(\Omega_0)$, because from $\Curl A' = \Curl A$ we deduce that
	\[
	\int_\gamma A' = \int_\gamma A \in \tau\eps\;\Z^2 \qquad \forall \;\gamma\subset \Omega_0\setminus S'\subset \Omega_0\setminus S,
	\]
	with $\gamma$ closed simple rectifiable curve.
	Moreover, ii) holds true as well, because
	\begin{align*}
		x\in \Omega_0\setminus S' \Longrightarrow
		\begin{cases}
			\textup{either }\; x\in \Omega_0\setminus (E\cup B_{(\lambda +3)\eps}(S)) &\Longrightarrow \quad |A'(x)|=|A(x)| \leq 2\sqrt{2},\\
			\textup{or }\; x\in (E\setminus S')\cap\Omega_0\subset U &\Longrightarrow \quad |A'(x)|= | A(x)| \leq C,
		\end{cases}
	\end{align*}
	where in the first instance we have used that
	if $x\in \Omega_{3\eps}\setminus (E\cap B_{(\lambda +3)\eps}(S))$, then by definition of $E$
	\begin{equation*}
		\dist^2(A(x), SO(2))\leq 2
		\quad \Longrightarrow\quad
		|A(x)|\leq \dist(A(x), SO(2)) + |I|
		\leq
		2\sqrt{2}.
	\end{equation*}

	Finally, in order to estimate the energy, we start noticing that
	\begin{equation}\label{eq-core-Sprime}
		\begin{split}
			\Ec^\eps(S') &
			\leq C|B_{(\lambda+3) \eps}(S)|
			+C|B_{\eps}(E')| \leq C \Ec^\eps(S) + C|B_{\eps}(E')|\\
			&\leq C \Ec^\eps(S) + C\El^\eps(A,S),
		\end{split}
	\end{equation}
	where we used Lemma \ref{l.covering1}: namely, for every $\kappa>0$ there exists a constant $C(\kappa)>0$ such that
	\[
	|B_{\kappa\eps}(S)|\leq C(\kappa)|B_{\eps}(S)|;
	\]
	in addition we used the following argument: by Vitali covering theorem, there exists a finite family of disjoint balls $B_{\rho_i}(x_i)$, with $B_{\rho_i}(x_i)\subset E$ and $\rho_i\geq \eps$, such that
	\[
	E' \subset\bigcup_{i} B_{5\rho_i}(x_i),
	\]
	which by \eqref{eq-misura-E} leads to
	\begin{align*}
		|B_{\eps}(E')| &\leq \sum_{i} |B_{\eps +5\rho_i}(x_i)| \leq \sum_{i} |B_{6\rho_i}(x_i)|\leq 6^2\sum_{i} |B_{\rho_i}(x_i)|\leq 6^2 |E| \leq C\El^\eps(A,S).
	\end{align*}
	For the elastic energy, since $S\subset S'$, in view of \eqref{e.distanza L1 prima} we have that
	\begin{align*}
		\El^\eps(A',  S';\Omega_0)&\leq C\int_{\Omega\setminus B_{\lambda\eps (S')}}\dist(A(x), SO(2))^2\, dx + C\|A'-A\|_{L^2(\Omega_0)}^2\\
		&\stackrel{\eqref{e.distanza L1 prima}}{\leq} C\El^\eps(A, S).
	\end{align*}
	Putting these estimates together we conclude iii).
\end{proof}

\begin{lemma}\label{l.regolarizzazione rotore}
	There exists a dimensional constant $C>0$ such that for every open set $\Omega_1\subset\subset\Omega$, if $0<\eps<C^{-1}\dist(\Omega_1, \partial \Omega)$ and $(A,S)\in \mathcal{C}_\eps(\Omega)$, then there exists $(A'', S'')\in \mathcal{C}_\eps(\Omega_1)$ satisfying
	\begin{itemize}
		\item[i)] $\|A''\|_{L^\infty(\Omega_1)}\leq C$,
		\item[ii)] $\|A''-A\chi_{\Omega\setminus B_{\lambda\eps}(S)}\|_{L^2(\Omega_1)}^2 + \F_\eps(A'', S'';\Omega_1) \leq C \F_\eps(A,S)$,
		\item[iii)] $(\Curl A'')\vert_{\Omega_1}\in L^\infty(\Omega_1)$ with
		\[
		|\Curl A''(x)|\leq \frac{C}{\lambda\eps}\chi_{S''}(x)\qquad\forall\;x\in \Omega_1.
		\]
	\end{itemize}
\end{lemma}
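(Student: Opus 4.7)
The plan is to apply Lemma \ref{l.Linfty} on an intermediate domain to reduce to a field $A'$ bounded outside $S'$, and then to replace $A'$ inside a Vitali cover of $S'$ by balls of radius $\sim\lambda\eps$ with an explicit field of constant curl on each ball, tangentially matching $A'$ on the boundary of each ball so that the gluing produces no singular curl contribution.

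Concretely, fix $\Omega_1\subset\subset\Omega_0\subset\subset\Omega$ and apply Lemma \ref{l.Linfty} to obtain $(A',S')\in\mathcal{C}_\eps(\Omega_0)$ with $\|A'\|_{L^\infty(\Omega_0\setminus S')}\leq C$, $\Curl A'=\Curl A$ in $\Omega_0$, and $\F_\eps(A',S';\Omega_0)\leq C\F_\eps(A,S)$. By Vitali's theorem select a finite family of pairwise disjoint balls $B_j:=B_{r_j}(x_j)$ with $x_j\in S'$ and $r_j\in[\lambda\eps,5\lambda\eps]$ chosen (via a Fubini argument in the range of admissible radii) so that the traces of $A'$ on $\partial B_j$ are bounded by $C$, and such that the fivefold enlargements $B_{5r_j}(x_j)$ still cover $S'$. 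Put $S'':=\overline{\bigcup_j B_j}\cap\Omega_1$. The circulation $\int_{\partial B_j}A'=\tau\eps k_j$ defines an integer vector $k_j\in\Z^2$ with $|k_j|\leq C$, since $\|A'\|_{L^\infty(\partial B_j)}\leq C$ and $|\partial B_j|=2\pi r_j\leq C\lambda\eps$. On each $B_j$ we set
\[
\bar A_j(x):=\frac{\tau\eps}{2\pi r_j^2}\,k_j\otimes(x-x_j)^\perp,\qquad A''_j:=\bar A_j+\nabla u_j,
\]
where $u_j$ is the harmonic function on $B_j$ determined by the tangential condition $\partial_\tau u_j|_{\partial B_j}=(A'-\bar A_j)\cdot\tau$ (solvable because $\int_{\partial B_j}(A'-\bar A_j)\cdot\tau\,ds=\tau\eps k_j-\int_{B_j}\Curl\bar A_j=0$). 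Direct computation yields $\|\bar A_j\|_{L^\infty(B_j)}\leq C$ and $|\Curl\bar A_j|\leq C/(\lambda\eps)$, while scale-invariant elliptic estimates on the smooth ball give $\|\nabla u_j\|_{L^\infty(B_j)}\leq C$. Define $A'':=A''_j$ on $B_j$ and $A'':=A'$ on $\Omega_1\setminus\bigcup_j B_j$.

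Tangential matching on each $\partial B_j$ guarantees that $\Curl A''$ carries no surface-concentrated part, so $\Curl A''\in L^\infty(\Omega_1)$ is supported in $S''$ and pointwise bounded by $C/(\lambda\eps)$, giving iii). Property i) follows from the three $L^\infty$-bounds above together with $\|A'\|_{L^\infty(\Omega_0\setminus S')}\leq C$. Any closed curve $\gamma\subset\Omega_1\setminus S''$ lies in $\Omega_0\setminus S'$, where $A''=A'$, so (H2) for $(A'',S'')$ reduces to (H2) for $(A',S')$, placing $(A'',S'')\in\mathcal{C}_\eps(\Omega_1)$. Property ii) follows by splitting both $\|A''-A\chi_{\Omega\setminus B_{\lambda\eps}(S)}\|_{L^2}^2$ and $\F_\eps(A'',S'';\Omega_1)$ into the region $\bigcup_j B_j$, where the $L^\infty$-bounds combined with $\sum_j|B_j|\leq C|B_{\lambda\eps}(S)|$ from Lemma \ref{l.covering1} control the contribution, and the complement, where $A''=A'$ and Lemma \ref{l.Linfty}\,iii) applies. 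The main delicate point is the combinatorial estimate $|k_j|\leq C$: it is precisely the $L^\infty$-control on $A'$ outside $S'$ produced in Lemma \ref{l.Linfty} that makes the quantized circulations uniformly bounded, and thereby permits the constant-curl replacement $\bar A_j$ to carry them with $O(1)$ amplitude and $O(1/(\lambda\eps))$ curl density; without this bound the spreading would produce an unbounded curl, and the tangential matching would fail to be Lipschitz-controllable.
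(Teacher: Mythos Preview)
Your argument has a genuine gap in the covering step. Vitali's theorem gives you pairwise disjoint balls $B_j = B_{r_j}(x_j)$ whose fivefold enlargements cover $S'$, but the balls $B_j$ themselves do \emph{not} cover $S'$ in general (they cannot, unless $S'$ happens to break into well-separated pieces of diameter comparable to $\lambda\eps$; in the construction of Lemma~\ref{l.Linfty} one has $S'=\overline{B_{(\lambda+6)\eps}(S)\cup B_\eps(E')}$, which typically has connected components of arbitrary diameter). Consequently, on $\Omega_1\setminus\bigcup_j B_j$ you still have $A''=A'$, and $\Curl A'' = \Curl A'$ is a distribution supported on $S'$ which is a priori singular there; this part of the curl is not contained in your $S''=\overline{\bigcup_j B_j}\cap\Omega_1$ and is certainly not in $L^\infty$, so iii) fails. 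The same defect breaks the verification of (H2): a closed curve $\gamma\subset\Omega_1\setminus S''$ may well pass through $S'\setminus\bigcup_j B_j$, so the inclusion $\Omega_1\setminus S''\subset\Omega_0\setminus S'$ that you invoke is false.

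A secondary issue is the Fubini selection of $r_j$: Lemma~\ref{l.Linfty} bounds $A'$ only on $\Omega_0\setminus S'$, and since $S'$ is a fat set (it contains a $(\lambda+6)\eps$-neighbourhood of $S$), for many centres $x_j\in S'$ there is no radius $r_j\in[\lambda\eps,5\lambda\eps]$ with $\partial B_{r_j}(x_j)\cap S'=\emptyset$; without an $L^\infty$ tangential trace the elliptic bound $\|\nabla u_j\|_{L^\infty}\le C$ is unjustified. The paper avoids both problems by a different mechanism: it first passes to the globally bounded field $\hat A:=A'\chi_{\Omega_0\setminus B_{\lambda\eps}(S')}+I\chi_{B_{\lambda\eps}(S')}$ and then sets $A'':=(1-\zeta)\hat A+\zeta\,\hat A\star\varphi_{\lambda\eps}$ with a cutoff $\zeta$ supported on $B_{2\lambda\eps}(S')$. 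This modifies the field on a full tubular neighbourhood of $S'$ (no disjoint decomposition needed), and $\Curl A''$ can be computed explicitly as a sum of two terms, each bounded pointwise by $C(\lambda\eps)^{-1}\|\hat A\|_{L^\infty}\chi_{\supp\zeta}$.
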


\begin{proof}
	Let $\Omega_0$ be any open set such that $\Omega_1\subset\subset \Omega_0\subset\subset \Omega$ and let $(A',S')\in \mathcal{C}_\eps(\Omega_0)$ be the pair of Lemma \ref{l.Linfty}.
	We set $\hat A:=A' \chi_{\Omega_0\setminus B_{\lambda \eps}(S')} + I \chi_{B_{\lambda \eps}(S')\cap \Omega_0}$ and
	\begin{equation}\label{f-lemma2-1}
		A'':=(1-\zeta) \hat A+ \zeta \hat A \star \varphi_{\lambda\eps} \quad \textup{in }\Omega_1,
		\qquad  S'':=\overline{B_{2\lambda \eps}(S')}\cap \Omega_1,
	\end{equation}
	where $\varphi_{\lambda\eps}$ is a mollifier with compact support on $B_{\lambda\eps}$ and $\zeta$ is a cut-off function with
	\[
	\zeta=1 \quad \textup{in } B_{\lambda\eps}(S'\cap \Omega_1),
	\quad \zeta=0\quad\textup{in}
	\quad \Omega_0\setminus B_{2\lambda\eps}(S'\cap \Omega_1)\quad\textup{and} \quad|\nabla \zeta|\leq \frac{C}{\lambda\eps},
	\]
	the last condition being admissible as soon as $\eps<\dist(\Omega_1, \partial \Omega_0)$.
	We start noticing that i) holds, because $\hat A$ and, hence, $A''$ are bounded as a consequence of Lemma \ref{l.Linfty} ii).

	Next observe that
	\begin{align*}
		\|A'' - \hat A\|_{L^2(\Omega_1)}^2 & = \|\zeta (\hat A\star \varphi_{\lambda\eps} - \hat A)\|_{L^2(\Omega_1)}^2\leq C\|\hat A\|^2_{L^\infty}\,|B_{2\lambda \eps}(S')\cap \Omega_0|\leq C\mathcal{F}_\eps (A',S';\Omega_0).
	\end{align*}
	Therefore, we can conclude that
	\begin{align}\label{e.stima L2}
		\|A''-A'\chi_{\Omega_1\setminus B_{\lambda\eps}(S')}\|_{L^2(\Omega_1)} &\leq
		\|A''-\hat A\|_{L^2(\Omega_1)} +\|\hat A-A'\|_{L^2(\Omega_1\setminus B_{\lambda\eps}(S'))}+\|\hat A\|_{L^2(\Omega_1\cap B_{\lambda\eps}(S'))}\notag\\
		&\leq C\mathcal{F}_\eps^{\frac12} (A',S';\Omega_0) +
		\sqrt{2}\,|B_{\lambda\eps}(S')\cap \Omega_1|^{\frac12}\leq C\mathcal{F}_\eps^{\frac12} (A',S';\Omega_0),
	\end{align}
	and
	\begin{align*}
		\|A''-A\chi_{\Omega_1\setminus B_{\lambda\eps}(S)}\|_{L^2(\Omega_1)} &\leq
		\|A''-A'\chi_{\Omega_1\setminus B_{\lambda\eps}(S')}\|_{L^2(\Omega_1)}+ \|A'-A\|_{L^2(\Omega_1\setminus B_{\lambda\eps}(S'))}\\
		&\qquad +\|A\|_{L^2(\Omega_1\cap (B_{\lambda\eps}(S')\setminus B_{\lambda\eps}(S)))}\\
		&= C\mathcal{F}_\eps^{\frac12}(A',S';\Omega_0) +\|A'-A\|_{L^2(\Omega_1)}+\\
		&\qquad +\left(C\int_{B_{\lambda\eps}(S')\setminus B_{\lambda\eps}(S)} \big(\dist^2(A, SO(2))+1\big) \,dx \right)^{\frac12}\\
		&\leq C\mathcal{F}_\eps^{\frac12}(A',S';\Omega_0).
	\end{align*}
	In particular, we infer ii), since
	\begin{align*}
		\El^\eps(A'',  S'';\Omega_1)&\leq C\int_{\Omega_1\setminus B_{\lambda\eps (S')}}\dist(A'(x), SO(2))^2\, dx + C\|A''-A'\chi_{\Omega_1\setminus B_{\lambda\eps}(S')}\|_{L^2(\Omega)}^2\\
		&\leq C\F_\eps(A', S';\Omega_1),
	\end{align*}
	and from Lemma \ref{l.covering1} also
	\begin{equation*}
		\Ec^\eps(S'') \leq C \Ec^\eps(S).
	\end{equation*}
	Finally, we can compute the Curl of $A''$ as follows:
	\begin{align*}
		\Curl A''&= \big(\hat A\star \varphi_{\lambda\eps}-\hat A\big) \nabla\zeta^\perp + \zeta  \hat A\star \nabla^\perp \varphi_{\lambda\eps}+(1-\zeta)\Curl  \hat A\notag
		\\
		&=(A' \star \varphi_{\lambda\eps}-A') \nabla\zeta^\perp + \zeta  \hat A\star \nabla^\perp \varphi_{\lambda\eps}+(1-\zeta)\Curl A'\\
		&=(A' \star \varphi_{\lambda\eps}-A') \nabla\zeta^\perp +\zeta  \hat A\star \nabla^\perp \varphi_{\lambda\eps},
	\end{align*}
	where we used that $A'$ is Curl-free outside $S'$.
	We infer in particular that
	\[
	\supp\big(\Curl A''\big)\subset \supp(\zeta)\cap \Omega_1 \subset \overline{B_{2\lambda \eps}(S')}\cap \Omega_1=S''.
	\]
	Moreover, since $\| \hat A\|_{L^\infty(\Omega_0)}\leq C$, we conclude that
	\[
	|\Curl A''(x)|\leq \frac{C}{\lambda\eps}\| \hat A\|_{L^\infty(\Omega_0)}\,\chi_{S''}(x)\leq \frac{C}{\lambda\eps}\chi_{S''}(x)\qquad \forall\;x\in\Omega_1.
	\]
\end{proof}

\begin{lemma}\label{l.armonica}
	There exists a dimensional constant $C>0$ such that for every open set $\Omega_1\subset\subset\Omega$, if $0<\eps<C^{-1}\dist(\Omega_1, \partial \Omega)$ and $(A,S)\in \mathcal{C}_\eps(\Omega)$, then there exists $(A_\harm, S_\harm)\in \mathcal{C}_\eps(\Omega_1)$ satisfying
	\begin{itemize}
		\item[i)] $\|A_\harm-A\chi_{\Omega\setminus B_{\lambda\eps}(S)}\|_{L^2(\Omega_1)}^2+
		\F_\eps(A_\harm,S_\harm;\Omega_1)\leq C \F_\eps(A,S)$;
		\item[ii)] $A_\harm$ is harmonic in $\Omega_1\setminus S_\harm$;
		\item[iii)] $(\Curl A_\harm)\vert_{\Omega_1}\in L^\infty(\Omega_1)$ with
		\[
		|\Curl A_\harm(x)|\leq \frac{C}{\lambda\eps}\chi_{S_\harm}(x)\qquad\forall\;x\in \Omega_1.
		\]
	\end{itemize}
\end{lemma}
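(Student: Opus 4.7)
The plan is to start from the bounded-curl field $(A'',S'')$ provided by Lemma \ref{l.regolarizzazione rotore} and correct $A''$ by a suitable gradient so as to enforce divergence-freeness away from the defect set, thereby producing a field that is also harmonic there. First I apply Lemma \ref{l.regolarizzazione rotore} on an intermediate open set $\Omega_1\subset\subset\Omega_1'\subset\subset\Omega$ to obtain $(A'',S'')\in\mathcal{C}_\eps(\Omega_1')$ with all three conclusions of that lemma. Then I regularise $S''$ into a closed set $S_\harm\subset\Omega_1$ containing $B_{\lambda\eps}(S'')\cap\Omega_1$ whose boundary is uniformly Lipschitz at scale $\eps$ (for instance, by covering $S''$ with balls of radius $\rho\eps$ centered on a suitable $\eps$-net and taking the closure of the union, with $\rho>0$ a fixed dimensional constant). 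This design guarantees that each connected component $U_\alpha$ of $U:=\Omega_1\setminus S_\harm$ has a boundary that is bi-Lipschitz controlled at unit scale after rescaling, a property crucial for the rigidity step below.

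For each row $i\in\{1,2\}$ I solve the Dirichlet problem
\[
\Delta v^i = \Div A''^i \quad\text{in } U,\qquad v^i=0\quad\text{on }\partial U,
\]
which decouples over the connected components of $U$. I then set $A_\harm:=A''-\nabla V$ on $\Omega_1$, where $V\in H^1_0(\Omega_1)$ denotes the extension of $v$ by zero to $S_\harm$. Since $V\in H^1_0(\Omega_1)$, the distributional identity $\Curl \nabla V=0$ holds on all of $\Omega_1$, so $\Curl A_\harm=\Curl A''$ and iii) follows from Lemma \ref{l.regolarizzazione rotore} iii) together with $\supp(\Curl A'')\subseteq S''\subseteq S_\harm$. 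On $U$ one has $\Curl A_\harm=0$ (as $S''\subset S_\harm$) and $\Div A_\harm = \Div A''-\Delta v = 0$ by construction of $v$, hence $A_\harm$ is componentwise harmonic in $U$, giving ii). The circulation condition (H2) for $(A_\harm,S_\harm)\in\mathcal{C}_\eps(\Omega_1)$ also holds, since for any closed simple curve $\gamma\subset U$ one has $\int_\gamma A_\harm = \int_\gamma A''\in\tau\eps\Z^2$, because $V$ is single-valued on each connected component of $U$.

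The main technical point is the $L^2$ bound in i). Testing the weak formulation on each $U_\alpha$ with $\varphi=v^i$ and exploiting the freedom to subtract an arbitrary constant vector (since $v^i\in H^1_0(U_\alpha)$) yields
\[
\|\nabla v^i\|_{L^2(U_\alpha)}\leq \|A''^i-R_\alpha^i\|_{L^2(U_\alpha)}\qquad\forall\,R_\alpha\in SO(2).
\]
Because $A''$ is curl-free on $U_\alpha$ and close to $SO(2)$, the Friesecke--James--Müller rigidity estimate (applied on simply-connected pieces together with a covering argument in the multiply-connected case) provides $R_\alpha\in SO(2)$ with $\|A''-R_\alpha\|_{L^2(U_\alpha)}^2\leq C\int_{U_\alpha}\dist^2(A'',SO(2))\,dx$, where $C$ is a dimensional constant thanks to the scale invariance of the 2D rigidity constant and the uniform Lipschitz structure of $\partial U_\alpha$ built into the design of $S_\harm$. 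Summing over $\alpha$ and invoking Lemma \ref{l.regolarizzazione rotore} gives $\|\nabla V\|_{L^2(\Omega_1)}^2\leq C\,\F_\eps(A,S)$. The $L^2$ bound in i) then follows by the triangle inequality together with Lemma \ref{l.regolarizzazione rotore} ii), while the elastic and core terms in $\F_\eps(A_\harm,S_\harm;\Omega_1)$ are controlled via $\dist^2(A_\harm,SO(2))\leq 2\dist^2(A'',SO(2))+2|\nabla V|^2$ and the covering Lemma \ref{l.covering1} applied to $|B_{\lambda\eps}(S_\harm)|\leq C|B_{\lambda\eps}(S'')|$.
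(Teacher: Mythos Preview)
Your overall architecture---start from $(A'',S'')$, solve a Dirichlet problem for the divergence on the complement of the defect set, and subtract the resulting gradient---is exactly the paper's. The difference, and the place where your argument has a real gap, is the $L^2$ bound on the correction.

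You estimate $\|\nabla V\|_{L^2(U_\alpha)}$ by first replacing $A''$ with $A''-R_\alpha$ for some $R_\alpha\in SO(2)$, and then invoking Friesecke--James--M\"uller rigidity on each connected component $U_\alpha$ of $\Omega_1\setminus S_\harm$ to get $\|A''-R_\alpha\|_{L^2(U_\alpha)}^2\leq C\int_{U_\alpha}\dist^2(A'',SO(2))$ with a \emph{dimensional} constant $C$. This is not justified. The FJM constant depends on the global shape of the domain (e.g.\ a John condition or the aspect ratio), not merely on a local Lipschitz bound for the boundary. Even if $S_\harm$ is a finite union of well-separated $\rho\eps$-balls, the components $U_\alpha$ of the complement can have arbitrarily bad geometry: think of a long chain of balls whose complement contains two large regions joined by a narrow corridor, on which $A''$ could be close to two different rotations while $\dist(A'',SO(2))$ stays small. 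The ``covering argument in the multiply-connected case'' you allude to runs into the same obstruction and is not a routine step. Regularising $S''$ buys you local smoothness of $\partial U_\alpha$ but not a uniform rigidity constant.

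The paper avoids this difficulty altogether by a two-dimensional algebraic trick that gives a \emph{pointwise} bound, so no regularity of $\Omega_1\setminus S''$ is needed (and one may simply take $S_\harm=S''$). Since $\Curl A''=0$ on $\Omega_1\setminus S''$, one has $\Div(\mathrm{Cof}\,A'')=0$ there, so the Dirichlet problem reads $\Delta z=\Div A''=-\Div(\mathrm{Cof}\,A''-A'')$. Testing with $z$ yields
\[
\int_{\Omega_1\setminus S''}|\nabla z|^2\,dx\;\leq\;\int_{\Omega_1\setminus S''}|\mathrm{Cof}\,A''-A''|^2\,dx.
\]
Writing $A''=R+B$ with $R\in SO(2)$ and $|B|=\dist(A'',SO(2))$, and using $\mathrm{Cof}\,R=R$, one gets $|\mathrm{Cof}\,A''-A''|=|\mathrm{Cof}\,B-B|\leq 2|B|$, hence $|\mathrm{Cof}\,A''-A''|^2\leq 4\,\dist^2(A'',SO(2))$ pointwise. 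This immediately gives $\|\nabla z\|_{L^2}^2\leq 4\,\El^\eps(A'',S'')$ and closes the estimate in i) without any rigidity or boundary regularisation.
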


\begin{proof}
	Let $(A'', S'')\in \mathcal{C}_\eps(\Omega_1)$ be the pair of Lemma \ref{l.regolarizzazione rotore}.
	Recall that, since $\Curl A''=0$ in $\Omega_1\setminus S''$, then $\Div({\rm Cof}A'')=0$ in $\Omega_1\setminus S''$.
	We consider the solution $z:\Omega_1\setminus S''\to\R^2$ of the following problem:
	\begin{equation}\label{f-lemma-harm1}
		\begin{cases}
			\Delta z= -\Div({\rm Cof}A''-A'')= \Div A''\quad & \hbox{in } \Omega_1\setminus S'',\\
			z\in H_0^1(\Omega_1\setminus S'',\R^2),
		\end{cases}
	\end{equation}
	and we set $$A_\harm:= A'' -\nabla z,\qquad S_\harm=S''.$$
	Clearly, by the definition of $z$ we have that $\Div A_\harm=0$ in $\Omega_1\setminus S_\harm$ and $\Curl A_\harm=\Curl A'' =0$ in $\Omega_1\setminus S''$,
	thus implying ii) and iii) (in view of Lemma \ref{l.regolarizzazione rotore}).
	Moreover, we can estimate the $L^2$ distance between $A''$ and $A_\harm$:
	\begin{equation}\label{f-lemma-harm2}
		\begin{split}
			\int_{\Omega_1\setminus S_\harm} |A_\harm- A''|^2 dx &= \int_{\Omega_1\setminus S_\harm} |\nabla z|^2 dx \leq  \int_{\Omega_1\setminus S''} |{\rm Cof}A''-A''|^2 dx\\&\leq 4\int_{\Omega_1\setminus S''}\dist^2(A'', SO(2))\,dx,
		\end{split}
	\end{equation}
	where we used that
	$$
	|{\rm Cof}M-M|^2 \leq 4\dist^2(M, SO(2))\qquad \forall\;M\in R^{2\times 2}.
	$$
	Indeed, we can write $M=R+B$ with $R\in SO(2)$ and $|B|=\dist(M,SO(2))$ and, since ${\rm Cof}M= R + {\rm Cof}B$, we get
	$$
	|M-{\rm Cof}M|^2= |B-{\rm Cof}B|^2 \leq 4 |B|^2.
	$$
	Finally, for i) we notice that $|B_{\lambda\eps}(S_\harm)| = |B_{\lambda\eps}(S'')|$ and
	\begin{align*}
		\|\dist(A_\harm, SO(2))\|^2_{L^2(\Omega\setminus B_{\lambda\eps}(S_\harm))} & \leq 2\|\dist(A'', SO(2))\|^2_{L^2(\Omega\setminus B_{\lambda\eps}(S''))} +\\
		&\qquad +2\|A_\harm-A''\|^2_{L^2(\Omega\setminus B_{\lambda\eps}(S''))}\\
		&\leq C\mathcal{F}_\eps(A,S).\qedhere
	\end{align*}
\end{proof}

\section{Compactness}
In this section we prove the compactness part of the $\Gamma$-convergence result in Theorem \ref{t.main}. To this aim we recall the notation $S_A$ introduced in Remark \ref{r.S} for any matrix field $A\in L^1(\Omega;\R^{2\times 2})$.

\begin{theorem}\label{t.compactness}
For every sequence
$(A_\eps)_{\eps>0}$ such that \[\limsup_{\eps\to0^+} F_\eps(A_\eps) < +\infty,\] there
exists a subsequence $\eps_n\to 0^+$ such that $A_{{\eps_n}}\chi_{\Omega\setminus B_{\lambda\eps_n}(S_{A_{\eps_n}})}$ converge to $A$ in $L^2(\Omega)$. Moreover, $A$ is a micro-rotation, and for every $\Omega_0\subset\subset\Omega$ it holds
\begin{equation*}
\int_{J_A\cap\Omega_0}|A^+-A^-|\; \big|\log|A^+-A^-|\big|\; d\Ha^1 \leq C\liminf_{\eps\to 0^+} F_\eps(A_\eps),\end{equation*}
where $C=C(\Omega_0,\Omega)>0$.
\end{theorem}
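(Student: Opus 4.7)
The plan is to combine the refinement machinery of Section \ref{s.refinement} with the M\"uller-Scardia-Zeppieri rigidity for incompatible fields and the Lauteri-Luckhaus logarithmic lower bound: I first extract an $L^2$-convergent subsequence with $BV$ limit, then use the log lower bound to enforce the micro-rotation structure together with the jump estimate.

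First, on an exhaustion $\Omega_0\subset\subset\Omega_1\subset\subset\Omega$, Lemma \ref{l.regolarizzazione rotore} replaces $A_\eps$ with surrogates $(A''_\eps, S''_\eps)\in\mathcal{C}_\eps(\Omega_1)$ satisfying $\|A''_\eps\|_{L^\infty(\Omega_1)}\leq C$, with both $\F_\eps(A''_\eps, S''_\eps;\Omega_1)$ and $\|A''_\eps - A_\eps\chi_{\Omega\setminus B_{\lambda\eps}(S_{A_\eps})}\|^2_{L^2(\Omega_1)}$ bounded by $C\eps\,(1+F_\eps(A_\eps))$, and with $|\Curl A''_\eps|\leq \tfrac{C}{\lambda\eps}\chi_{S''_\eps}$. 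Since $|S''_\eps|\leq C\,\Ec^\eps(S_{A_\eps})\leq C\eps$, the total curl mass $\|\Curl A''_\eps\|_{\M(\Omega_1)}$ is uniformly bounded and $\int_{\Omega_1}\dist^2(A''_\eps,SO(2))\,dx\to 0$. Next, I cover $\Omega_0$ by a regular grid of cubes $\{Q_i\}$ of small side $\rho$ (independent of $\eps$) and apply the incompatible rigidity estimate of \cite{MSZ} on each $Q_i$, producing rotations $R_{\eps,i}\in SO(2)$ such that $\|A''_\eps - R_{\eps,i}\|^2_{L^2(Q_i)}\leq C\bigl(\|\dist(A''_\eps, SO(2))\|^2_{L^2(Q_i)}+|\Curl A''_\eps|(Q_i)\bigr).$ Setting $\tilde A_\eps:=\sum_i R_{\eps,i}\chi_{Q_i}$, the triangle inequality for adjacent cells yields $\|\tilde A_\eps\|_{BV(\Omega_0)}\leq C$ while $\|\tilde A_\eps - A''_\eps\|_{L^2(\Omega_0)}\to 0$. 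By $BV$ compactness and the uniform $L^\infty$ bound, a subsequence $\tilde A_{\eps_n}$ converges in $L^2(\Omega_0)$ to some $A\in BV(\Omega_0;SO(2))$, and the triangle inequality transfers the convergence to $A_{\eps_n}\chi_{\Omega\setminus B_{\lambda\eps_n}(S_{A_{\eps_n}})}$; a diagonal procedure over $\Omega_0\nearrow\Omega$ yields the desired limit.

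To promote $A$ to a micro-rotation and obtain the jump inequality I invoke the Lauteri-Luckhaus logarithmic lower bound (Theorem \ref{t.LL}) in a blow-up argument at $\Ha^1$-a.e.\ Lebesgue point $x\in J_A$: rescaling on squares $Q_r(x)$ aligned with $\nu_A(x)$, the rescaled sequence approximates a straight interface between $R^-(x)$ and $R^+(x)$, so that $\liminf_{\eps\to 0^+} F_\eps(A_\eps; Q_r(x))\geq c\,r\,|A^+(x)-A^-(x)|\,\bigl|\log|A^+(x)-A^-(x)|\bigr| + o(r).$ The super-additivity of $Q\mapsto\liminf_\eps F_\eps(A_\eps;Q)$ on disjoint open subsets, together with integration over $J_A\cap\Omega_0$, yields the stated lower bound. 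Moreover, since $t\mapsto t|\log t|$ is concave for small $t>0$, distributing a total mass $t$ over $N$ infinitesimal jumps produces a cost $\sim t\log N\to\infty$ as $N\to\infty$: any nontrivial Cantor or absolutely continuous part of $DA$ would then force $\liminf F_\eps(A_\eps)=+\infty$, which rules it out and shows that $A$ is a micro-rotation.

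The main obstacle is the covering step: the cube side $\rho$ must be chosen so that the jumps $|R_{\eps,i}-R_{\eps,j}|$ between adjacent cells are controlled uniformly by the MSZ right-hand side applied on $Q_i\cup Q_j$, even on cells crossed by the (thin but energetically charged) defect set $S''_\eps$; in particular one has to balance the $L^2$-square root in the rigidity estimate against the order-one total curl mass so that summing contributions from adjacent pairs does not destroy the uniform $BV$ bound on $\tilde A_\eps$.
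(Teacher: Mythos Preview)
Your $BV$-compactness step is essentially the paper's (Lemma \ref{l.proposition3} and Proposition \ref{p.compactness1}), and the ``obstacle'' you raise dissolves once you notice that $\|\dist(A''_\eps,SO(2))\|_{L^2(\Omega_1)}\leq C\eps^{1/2}\to 0$: in the rigidity estimate on each cube this term plays the role of the vanishing error $\gamma_j$ in Corollary \ref{abstract2}, and only $|\Curl A''_\eps|$ (which is order one, not small) feeds the $BV$ bound. No balancing is needed. (Also, your displayed MSZ inequality is miswritten: Theorem \ref{rigidity2} is linear on both sides, not squared on the left and linear in $|\Curl|$ on the right.)

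The real gap is in the $SBV$ step. Theorem \ref{t.LL} is \emph{not} a straight-interface lower bound of the shape $\liminf_\eps F_\eps(A_\eps;Q_r(x))\geq c\,r\,|A^+-A^-|\,|\log|A^+-A^-||$; it is the pseudolinear estimate \eqref{e:pseudolinear} on the \emph{limit} field, bounding $|\Curl A|(B_\rho(x))$ by $\omega\big(\mu(\overline{B_{2R}})/4R\big)\,\mu(\overline{B_{2R}})$. An interface lower bound of the kind you invoke is obtained only later in the paper (via the cell problem $\psi_0$), and its proof \emph{uses} the micro-rotation structure you are trying to establish---so your blow-up route is circular. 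More seriously, the concavity heuristic ``splitting mass $t$ into $N$ infinitesimal jumps costs $\sim t\log N$'' does not rule out the diffuse part of $DA$: at $|D^aA|$-a.e.\ point the blow-up of $A$ is a single constant rotation (no interface at all), and at Cantor points there is no canonical decomposition into ``jumps'' to which an interface bound could be applied. The paper's argument here is specific and not bypassed: pass to the harmonic refinement of Lemma \ref{l.armonica} so that hypotheses (i)--(iv) of Theorem \ref{t.LL} hold; use \eqref{e:pseudolinear} to show that $|\Curl A|$---and hence $|DA|$, via $|DA|\leq C|\Curl A|$---vanishes on the set $\{\Theta^*_1(\mu,\cdot)=0\}$; since $\{\Theta^*_1>0\}$ is $\Ha^1$-$\sigma$-finite, the $BV$ structure theorem forces $DA$ to be pure jump. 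The logarithmic jump inequality then comes from inverting the pointwise relation $|A^+-A^-|\leq C\,\omega(\Theta^*_1)\,\Theta^*_1$ on $J_A$, not from a blow-up.
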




\subsection{Rigidity and BV-compactness}
A first step towards the proof of Theorem \ref{t.compactness} is given by providing the compactness in $BV$.
To this aim we start by recalling some simple results for functions of bounded variations.
In the following we denote the closed (coordinate) cubes with
\[
Q(x_0,r) := \left\{x \in \R^n: \|x-x_0\|_\infty \leq r \right\},
\]
where the sup-norm is
\begin{gather*}
	\|x-x_0\|_\infty := \sup_{}\big\{|(x-x_0)\cdot e_i| : i\in\{1,\ldots, n\}\big\},
\end{gather*}
with $e_i$ the standard basis in $\R^n$.

\begin{lemma}\label{abstract1}
	Let $\Omega\subset\R^n$ be open, $u:\Omega\to\R^N$ in $L^1$ and let $\lambda$ be a finite positive Borel measure in $\Omega$ such that for all cubes $Q(x,r)\subset\Omega$
	there exists a value $u_{Q(x,r)}\in \R^N$ such that
	$$
	\int_{{Q(x,r)}} |u-u_{Q(x,r)}|\,dx \leq r\lambda({Q(x,r)}).
	$$
	Then $u\in BV(\Omega,\R^N)$ and $|Du|\leq C\lambda$ for a dimensional constant $C>0$.
\end{lemma}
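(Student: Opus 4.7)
The plan is to verify the conclusion by a classical mollification argument: estimate the $L^1$-norm of $\nabla u_\eps$ on any compactly contained subset in terms of $\lambda$, and then pass to the limit using lower semicontinuity of the total variation. Since both the hypothesis and the conclusion are componentwise, I would reduce immediately to the scalar case $N=1$.

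Fix a non-negative mollifier $\rho\in C_c^\infty(\R^n)$ supported in the unit cube $Q(0,1)$ with $\int\rho=1$, set $\rho_\eps(x):=\eps^{-n}\rho(x/\eps)$, and define $u_\eps:=u*\rho_\eps$ on $\Omega_\eps:=\{z\in\Omega:\dist(z,\partial\Omega)>\eps\}$. The first step is a pointwise bound on $\nabla u_\eps(x)$: using $\int\nabla\rho_\eps=0$ to subtract the constant $u_{Q(x,\eps)}$ provided by the hypothesis,
\[
|\nabla u_\eps(x)|\;\leq\;\|\nabla\rho_\eps\|_\infty\int_{Q(x,\eps)}|u-u_{Q(x,\eps)}|\,dy\;\leq\;C\,\eps^{-n-1}\cdot\eps\,\lambda(Q(x,\eps))\;=\;C\,\eps^{-n}\lambda(Q(x,\eps)).
\]

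The second step is to integrate this pointwise estimate on any open $U\Subset\Omega$ with $\eps<\dist(U,\partial\Omega)$ and apply Fubini, which yields
\[
\int_U|\nabla u_\eps|\,dx\;\leq\; C\eps^{-n}\int_\Omega |Q(y,\eps)\cap U|\,d\lambda(y)\;\leq\;C\,\lambda(U_\eps),
\]
where $U_\eps:=\{y\in\Omega:\dist(y,U)<\eps\}$. Since $u_\eps\to u$ in $L^1_{\mathrm{loc}}(\Omega)$, lower semicontinuity of the total variation with respect to $L^1$-convergence gives $|Du|(U)\leq C\lambda(\overline U)$, using the monotone convergence $\lambda(U_\eps)\downarrow\lambda(\overline U)$ as $\eps\to 0^+$ (which requires the finiteness of $\lambda$). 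Taking the supremum over $U\Subset V$ for any open $V\subseteq\Omega$ and combining the inner regularity of $|Du|$ with the outer regularity of $\lambda$ yields $|Du|(V)\leq C\lambda(V)$; by Borel regularity this extends to every Borel set, and in particular the case $V=\Omega$ gives $|Du|(\Omega)\leq C\lambda(\Omega)<\infty$, so that $u\in BV(\Omega;\R^N)$.

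The argument is essentially routine; the only points that require care are using a \emph{cube}-supported mollifier so that the hypothesis applies directly (no need to convert between cubes and balls), and handling the outer neighborhood $U_\eps$ cleanly in the limiting step so that the measure-theoretic inequality $|Du|\leq C\lambda$ comes out with the right-hand side on $V$ rather than on its closure. I do not anticipate a genuine obstacle beyond this bookkeeping.
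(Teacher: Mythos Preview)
Your mollification argument is correct and complete. The paper, however, takes a different route: it builds piecewise-constant approximations $u_r$ on a grid of cubes $Q_i$ of side $2r$, shows $u_r\to u$ in $L^1$ from the hypothesis, and then bounds the jump of $u_r$ across a common face $S_{ij}=Q_i\cap Q_j$ by comparing both $u_{Q_i}$ and $u_{Q_j}$ to $u_Q$ for a doubled cube $Q\supset Q_i\cup Q_j$; summing these face contributions gives $|Du_r|(\Omega')\leq C\lambda(\Omega'')$, and lower semicontinuity finishes.

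Your approach is arguably cleaner for this lemma in isolation: the cube-supported mollifier makes the subtraction of $u_{Q(x,\eps)}$ immediate, and Fubini replaces the paper's combinatorial bookkeeping over adjacent cubes. On the other hand, the paper's piecewise-constant approximations $u_r$ are reused in the subsequent Corollary~\ref{abstract2}, where the explicit grid construction (with values $u_{j,Q_i}\in K$) is what yields both the range constraint $u\in K$ a.e.\ and the upgrade from weak to strong $L^1_{\mathrm{loc}}$ convergence of the sequence $u_j$. Your mollifiers would not directly give either of those conclusions, so the paper's choice is tailored to what comes next rather than to proving this lemma most efficiently.
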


\begin{proof}
	For every open subsets $\Omega'\subset\subset\Omega''\subset\subset\Omega$, we consider $0<r<\frac{1}{10}\dist(\Omega', \partial \Omega''))$ and a grid of cubes $Q_i=Q(x_i,r)\subset \Omega''$ with pairwise disjoint interiors such that
	\[\Omega'\subset\bigcup_i Q_i .
	\]
	We define the piecewise constant function $u_r:\Omega'\to \R^N$
	$$
	u_r(x):=\sum_i u_{Q_i}\chi_{Q_i}(x) \qquad x\in \Omega'.
	$$
	By assumption we have
	\begin{equation}\label{L1}
		\int_{\Omega'} |u-u_r|\,dx\leq r \lambda(\Omega'').
	\end{equation}
	In particular $u_r$ converges to $u$ in $L^1(\Omega')$, because $\lambda(\Omega)<+\infty$.
	Now for every pair of cubes $Q_i$ and $Q_j$ with a common face, if we consider a cube $Q$ of side $2r$ containing both $Q_i$ and $Q_j$ (notice that by the choice of $r$ necessarily we have that $Q\subset \Omega''$), then
	$$
	|u_Q-u_{Q_i}|\leq \frac{1}{r^n}\left[\int_Q |u-u_Q|\,dx +\int_{Q_i} |u-u_{Q_i}|\,dx \right]\leq \frac{1}{r^{n-1}}\left[2\lambda(Q)+\lambda({Q_i}) \right].
	$$
	Therefore, if $S_{ij}=Q_i\cap Q_j$ is the face in common between $Q_i$ and $Q_j$, then
	$$
	|Du_r|(S_{ij})\leq r^{n-1}|u_{Q_j}-u_{Q_i}|\leq 4\lambda(Q)+\lambda({Q_i})+\lambda({Q_j}).
	$$
	From this we deduce that there exists a dimensional constant $C>0$ such that
	\begin{equation}\label{BVr}
		|Du_r|(\Omega')\leq C \lambda(\Omega'').
	\end{equation}
	Considering the $L^1$-convergence of $u_r$ to $u$ in $\Omega'$, we infer that $u\in BV(\Omega',\R^N)$, and by passing to the limit in \eqref{BVr} and using the lower-semicontinuity of the total variation we infer that
	$|Du|(\Omega')\leq C \lambda(\Omega'')$.
	By the arbitrariness of $\Omega'\subset\subset\Omega''$ we conclude that $u\in BV(\Omega,\R^N)$ with $|Du|\leq C\lambda$.
\end{proof}

A corollary of the previous lemma is the following perturbed version.

\begin{corollary}\label{abstract2}
	Let $\Omega\subset \R^n$ be an open and bounded set and $K\subset\subset\R^N$. Consider a sequence of functions
	$u_j:\Omega\to\R^N$ with $u_j \weakly u$ in $L^1(\Omega)$, a sequence of
	finite positive measures $\lambda_j$ with $\lambda_j \weakstar \lambda$ and a sequense of positive numbers $\gamma_j \downarrow 0$.
	If for all cubes $Q(x,r)\subset\Omega$ there exists a value $u_{j,Q(x,r)}\in K$ such that
	$$
	\int_{Q(x,r)} |u_j-u_{j,Q(x,r)}|\,dx \leq r\lambda_j(Q(x,r)) +\gamma_j,
	$$
	then $u_j\to u$ in $L^1_{\textup{loc}}(\Omega;\R^N)$, $u\in BV(\Omega;K)$ and $|Du|\leq C\lambda$ for a dimensional constant $C>0$.
\end{corollary}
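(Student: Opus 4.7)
The strategy is to repeat the cube-averaging argument of Lemma \ref{abstract1} while keeping track of the new error term $\gamma_j$, and to combine it with a diagonal/Urysohn extraction in order to exchange the two natural limits $j\to\infty$ and $r\to 0$.

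Fix $\Omega'\subset\subset\Omega''\subset\subset\Omega$ with $\lambda(\partial\Omega'')=0$ (possible by outer regularity) and a small $r>0$. Cover $\Omega'$ with a grid of cubes $\{Q_i=Q(x_i,r)\}_{i=1,\ldots,N(r)}$ contained in $\Omega''$, and set $u_{j,r}:=\sum_i u_{j,Q_i}\chi_{Q_i}$, where by hypothesis $u_{j,Q_i}\in K$. Summing the perturbed averaging inequality over $i$ and comparing values on adjacent cubes exactly as in Lemma \ref{abstract1} will yield
$$\|u_j-u_{j,r}\|_{L^1(\Omega')}\leq r\lambda_j(\Omega'')+N(r)\gamma_j,\qquad |Du_{j,r}|(\Omega')\leq C\lambda_j(\Omega'')+C(r)\gamma_j,$$
with $N(r),C(r)$ depending on $r$ only. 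Compactness of $K$ provides in addition the uniform bound $\|u_{j,r}\|_{L^\infty}\leq\sup_{y\in K}|y|$, independent of $j$ and $r$.

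For each fixed $r$ the $\gamma_j$-terms vanish as $j\to\infty$, so $\{u_{j,r}\}_j$ is bounded in $BV(\Omega')\cap L^\infty(\Omega')$. BV compactness, combined with a diagonal extraction over $r=1/k$, then produces a (not relabeled) subsequence $u_{j_n}$ along which $u_{j_n,1/k}\to v_{1/k}$ strongly in $L^1(\Omega')$ for every $k\in\N$. Each $v_{1/k}$ takes values in the closed set $K$ a.e.\ and satisfies $|Dv_{1/k}|(\Omega')\leq C\lambda(\overline{\Omega''})$, by lower semicontinuity together with the weak-$*$ convergence $\lambda_{j_n}(\Omega'')\to\lambda(\overline{\Omega''})$. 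The weak convergence $u_{j_n}\rightharpoonup u$ in $L^1$ and the lower semicontinuity of the $L^1$-norm under weak convergence then give
$$\|v_{1/k}-u\|_{L^1(\Omega')}\leq\liminf_n\|u_{j_n,1/k}-u_{j_n}\|_{L^1(\Omega')}\leq k^{-1}\lambda(\overline{\Omega''}),$$
so $v_{1/k}\to u$ in $L^1(\Omega')$ as $k\to\infty$. This identifies $u\in BV(\Omega';K)$ with $|Du|(\Omega')\leq C\lambda(\overline{\Omega''})$, and varying $\Omega''$ upgrades the estimate to $|Du|\leq C\lambda$ as measures.

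To promote the weak convergence $u_j\rightharpoonup u$ to strong $L^1_{\loc}$-convergence of the \emph{full} sequence, I would invoke the Urysohn subsequence principle: running the above extraction starting from an arbitrary subsequence, the triangle inequality
$$\|u_j-u\|_{L^1(\Omega')}\leq\|u_j-u_{j,1/k}\|_{L^1}+\|u_{j,1/k}-v_{1/k}\|_{L^1}+\|v_{1/k}-u\|_{L^1}$$
is bounded above by $2k^{-1}\lambda(\overline{\Omega''})$ after letting first $j\to\infty$ and then $k\to\infty$. The main technical obstacle is precisely this ordering of limits: $\gamma_j\to 0$ is exploitable only at a fixed grid scale, while the BV control of $u$ requires $r\to 0$, which is exactly what forces the diagonal extraction and the Urysohn step.
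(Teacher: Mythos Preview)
Your argument is correct and follows the same blueprint as the paper: fix a grid scale, let the $\gamma_j$ term vanish, use compactness of $K$ to extract limits of the piecewise-constant approximations, and upgrade weak to strong $L^1_{\loc}$-convergence via the triangle inequality (with Urysohn for the full sequence). The only organizational difference is that the paper first passes to the limit in the cube hypothesis---extracting, by a diagonal over \emph{rational} cubes, constants $u_Q=\lim_j u_{j,Q}\in K$ and deducing $\int_Q|u-u_Q|\,dx\le r\lambda(Q)$---and then applies Lemma~\ref{abstract1} directly to $u$; this avoids your intermediate BV bound on $u_{j,r}$ and the lower-semicontinuity step, but the content is the same.
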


\begin{proof}
	The proof is an immediate consequence of Lemma \ref{abstract1}. Indeed,  up to passing to a subsequence, for every $Q(x,r)\subset \Omega$ with center $x$ with rational coordinates and rational side-length $r$, there exists $u_{Q(x,r)}\in K$ such that $u_{j,Q(x,r)} \to u_{Q(x,r)}$ and
	\begin{align*}
		\int_{Q(x,r)} |u-u_{Q(x,r)}|\,dx &\leq \liminf_{j\to \infty}
		\int_{Q(x,r)} |u_j-u_{j,Q(x,r)}|\,dx\\
		&\leq \liminf_{j\to \infty} \left(r\lambda_j(Q(x,r)) +\gamma_j\right)
		\leq r\lambda(Q(x,r)).
	\end{align*}
	Therefore, we are in position to apply Lemma \ref{abstract1}. Moreover, for every $\Omega'\subset\subset \Omega$ and for any rational $r<{\rm dist}(\Omega',\partial \Omega)$, the piecewise constant approximations of $u$ (done on cubes with rational coordinates) given by $u_r := \sum_i u_{Q_i}\chi_{Q_i}$
	take values in $K$, we conclude also that $u$ itself takes its values almost everywhere in $K$. Finally, considering the corresponding approximations for $u_j$,  $u_{j,r} := \sum_i u_{j, Q_i}\chi_{Q_i}$, we have that
	\[
	\|u-u_r\|_{L^1(\Omega')} + \|u_j-u_{j,r}\|_{L^1(\Omega')} \leq C r+\gamma_j,
	\] and,
	since clearly $u_{j,r} \to u_r$ in $L^1(\Omega')$, we also infer the strong convergence of $u_j$ to $u$ in $L^1(\Omega',\R^N)$.
\end{proof}

For the proof of the $BV$-compactness we need to combine the previous corollary together with the rigidity estimate for incompatible fields proved by M\"uller, Scardia and Zeppieri \cite{MSZ} (see also  \cite{Garroni-Leoni-Ponsiglione} for the linear version of this estimate, and \cite{Conti-Garroni} for the general $n$-dimensional case).

\begin{theorem}[M\"uller, Scardia and Zeppieri \cite{MSZ}]\label{rigidity2}
	Let $\Omega$ be an open, bounded, simply connected
	and Lipschitz set in $\R^2$. Then, there exists a constant $C = C(\Omega) > 0$ with the following property: for every
	$A\in L^2(\Omega;\R^{2\times 2})$ with $\Curl A$ a vector valued measure, there exists a rotation $R\in SO(2)$ such that
	\begin{equation}\label{eq-rigidity}
		\|A-R\|_{L^2(\Omega)}\leq C\left(\|\dist(A,SO(2))\|_{L^2(\Omega)}+|\Curl A|(\Omega)  \right).
	\end{equation}
\end{theorem}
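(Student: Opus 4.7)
The statement extends the classical Friesecke--James--M\"uller rigidity from the $\Curl$-free case to matrix fields whose Curl is a finite Radon measure. The strategy I would pursue is to decompose $A$ as a compatible part plus an explicit corrector that absorbs the incompatibility, apply FJM rigidity to the compatible part, and control the corrector in terms of $|\Curl A|(\Omega)$.

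By mollification one may first reduce to the case where $A\in C^\infty(\Omega;\R^{2\times 2})$ and $\Curl A = f \in C^\infty$ with $\|f\|_{L^1(\Omega)}\leq |\Curl A|(\Omega)$; the general case follows by passing to the limit and using lower semicontinuity. Next, solve the two scalar Poisson problems $\Delta \phi^{(i)} = f^{(i)}$ in $\Omega$ with zero boundary data, $i=1,2$, and define the corrector $B$ row by row as $B^{(i)} := -J\nabla \phi^{(i)}$. Then $\Curl B = f = \Curl A$, so $A-B$ is $\Curl$-free on the simply connected domain $\Omega$, and hence $A-B = \nabla u$ for some potential $u\in W^{1,p}(\Omega;\R^2)$ with $p<2$.

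Applying classical FJM rigidity to $\nabla u$ would yield a rotation $R$ with $\|\nabla u - R\|_{L^2}\leq C\|\dist(\nabla u, SO(2))\|_{L^2}$, and combining with $\dist(\nabla u, SO(2))\leq \dist(A, SO(2))+|B|$ and the triangle inequality one arrives at $\|A-R\|_{L^2}\lesssim \|\dist(A,SO(2))\|_{L^2}+\|B\|_{L^2}$. The difficulty, and the main obstacle, is that two-dimensional Calder\'on--Zygmund theory with $L^1$ source only delivers $\|B\|_{L^{2,\infty}(\Omega)}\leq C|\Curl A|(\Omega)$: Dirac masses in $\Curl A$ produce correctors with $\sim 1/|x|$ behaviour, which sit exactly on the boundary of $L^2_{\loc}$. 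To bypass this I would use a good-$\lambda$ truncation: on $E_\lambda := \{|B|>\lambda\}$, whose measure is at most $C|\Curl A|(\Omega)^2/\lambda^2$ by Chebyshev, replace $B$ by a harmonic extension across each connected component of $E_\lambda$, obtaining an $L^\infty$ field $\tilde B$ whose Curl has total variation $\lesssim |\Curl A|(\Omega)$; then apply FJM to the $\Curl$-free field $A-\tilde B$ and optimise $\lambda$ of order $|\Curl A|(\Omega)\,|\Omega|^{-1/2}$ to obtain the stated bound.

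Equivalently, one can invoke the Lorentz-space refinement of FJM due to Conti--Dolzmann--M\"uller, which handles data with distance to $SO(2)$ in $L^{2,\infty}$ directly and yields the estimate without the truncation step. A final subtlety is the dependence of the constant on the shape of $\Omega$: one first establishes the estimate on a cube or a disk, where both the Calder\'on--Zygmund and FJM constants are universal, and then transfers it to a general bounded, simply connected Lipschitz domain via a bi-Lipschitz straightening argument, which is the source of the $C=C(\Omega)$ in the statement.
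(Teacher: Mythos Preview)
The paper does not prove this theorem: it is quoted from \cite{MSZ} and used as a black box in the compactness argument. Your outline is essentially the strategy carried out in \cite{MSZ}: write $A=\nabla u+B$ with $\Curl B=\Curl A$ via a Newtonian potential, observe that in two dimensions an $L^1$ (or measure) source only yields $B\in L^{2,\infty}$, and then close the estimate by invoking the Lorentz-space refinement of Friesecke--James--M\"uller rigidity. So your ``equivalently'' route is the published one.

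One point in the good-$\lambda$ variant you sketch is not quite right as stated: replacing $B$ by a harmonic extension on $E_\lambda=\{|B|>\lambda\}$ does not in general keep $|\Curl \tilde B|(\Omega)$ comparable to $|\Curl A|(\Omega)$ --- the extension creates new $\Curl$ concentrated on $\partial E_\lambda$, and you have no a priori control on $\Ha^1(\partial E_\lambda)$. What the truncation actually buys is $\|\tilde B\|_{L^\infty}\leq C\lambda$ together with $\|B-\tilde B\|_{L^2}^2\lesssim \lambda^2|E_\lambda|\lesssim |\Curl A|(\Omega)^2$ from the weak-$L^2$ bound; one then applies FJM to $\nabla u=A-B$ using that $\dist(\nabla u,SO(2))\leq \dist(A,SO(2))+|\tilde B|+|B-\tilde B|$, and the $L^\infty$ piece is handled by the $|E_\lambda|$ estimate. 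With this correction the argument goes through and is a standard way of proving the Lorentz refinement itself.
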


\begin{remark}
	Note that \eqref{eq-rigidity} is invariant under homotethic scalings; therefore, the constant $C(\Omega)$ depends only on the geometric shape of $\Omega$, not on its size: for examples, it is the same for all cubes.
\end{remark}

The following lemma is the core of the $BV$-compactness as shown in \cite{LL2}. For the readers convenience we give the details of the proof.

\begin{lemma}[{\cite[Proposition 3]{LL2}}]\label{l.proposition3}
	Let $U\subset \R^2$ be bounded and open, and let $A_j\in L^2(U;\R^{2\times 2})$ be a sequence such that
	$$
	\lim_{j\to +\infty} \|\dist(A_j,SO(2))\|_{L^2(U;\R^{2\times 2})}=0\qquad \hbox{and}\qquad \sup_j|\Curl (A_j)|(U)<\infty.
	$$
	Then, there exists a subsequence $A_{j'}$ strongly converging in $L^2$ to a field $A\in BV(U; SO(2))$
	and
	\begin{equation}\label{e.rotore controlla gradiente}
		|DA|\leq C |\Curl A|\,.
	\end{equation}
\end{lemma}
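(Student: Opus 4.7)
My plan is to combine the rigidity estimate for incompatible fields (Theorem \ref{rigidity2}) with the abstract BV-compactness of Corollary \ref{abstract2}, then upgrade the convergence to $L^2$, and finally exploit the $SO(2)$-valued structure of the limit to obtain the sharp measure bound.

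For the compactness step, I would apply Theorem \ref{rigidity2} on every closed cube $Q=Q(x,r)\subset U$ (with scale-invariant constant, hence uniform in $Q$) to obtain $R_{j,Q}\in SO(2)$ with
\[
\|A_j-R_{j,Q}\|_{L^2(Q)} \leq C\bigl(\|\dist(A_j,SO(2))\|_{L^2(Q)} + |\Curl A_j|(Q)\bigr).
\]
Cauchy--Schwarz and $|Q|^{1/2}=2r$ then give
\[
\int_Q |A_j-R_{j,Q}|\,dx \leq 2C r\,\|\dist(A_j,SO(2))\|_{L^2(U)} + r\cdot 2C|\Curl A_j|(Q),
\]
which is exactly the hypothesis of Corollary \ref{abstract2} with $K=SO(2)$, $u_{j,Q}:=R_{j,Q}$, $\lambda_j:=2C|\Curl A_j|$ and $\gamma_j:=2C\,\diam(U)\|\dist(A_j,SO(2))\|_{L^2(U)}\to 0$. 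The required weak $L^1$-convergence follows from the pointwise bound $|A_j|\le \sqrt{2}+\dist(A_j,SO(2))$, which keeps $\{A_j\}$ bounded in $L^2(U)$; up to a subsequence $A_j\weakly A$ in $L^2(U)$ and $|\Curl A_j|\weakstar \mu$ for some finite positive Radon measure $\mu$. Corollary \ref{abstract2} then yields $A_j\to A$ in $L^1_\loc(U)$, $A\in BV(U;SO(2))$, and $|DA|\le C'\mu$ for a dimensional constant. To upgrade to $L^2(U)$, I use that $|A_j|^2\le 4+2\dist^2(A_j,SO(2))$ with $\dist(A_j,SO(2))\to 0$ in $L^2(U)$, so $\{|A_j|^2\}$ is equiintegrable; combined with a.e.\ convergence (extracted along a further subsequence from the $L^1_\loc$ convergence), Vitali's theorem promotes the convergence to $L^2(U)$.

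For the final bound $|DA|\le C|\Curl A|$ the $SO(2)$-valued structure of the limit is decisive. On simply connected subdomains of $U$, $A$ admits a lifting $A=R(\theta)$ with $\theta\in BV_\loc$ and $R:\R\to SO(2)$ the canonical parametrization. The Vol'pert chain rule gives $DA=R'(\theta)\,D\theta$ on the absolutely continuous and Cantor parts, with $|R'(\theta)|=\sqrt{2}$, so $|DA|=\sqrt{2}|D\theta|$; a direct coordinate computation shows $|\Curl A|=|D\theta|$ on the same parts. On $J_A$ the identities $|R(\theta^+)-R(\theta^-)|^2 = 8\sin^2((\theta^+-\theta^-)/2)$ and $|\Curl A|^2 = 4\sin^2((\theta^+-\theta^-)/2)$ (the latter being independent of the jump normal $\nu$, since the cross terms from the two row contributions cancel) give $|DA|=\sqrt{2}|\Curl A|$ on the jump set. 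Summing the three contributions yields $|DA|\le \sqrt{2}|\Curl A|$ as Borel measures on $U$. This last step is also the main obstacle: the abstract compactness only delivers control by the weak$^*$-limit $\mu$, which a priori may be strictly larger than $|\Curl A|$, and closing this gap requires precisely the $SO(2)$-valued structure just described.
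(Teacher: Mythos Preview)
Your compactness argument is essentially the same as the paper's: rigidity on cubes feeds Corollary~\ref{abstract2}, and the upgrade to $L^2$ convergence (whether by Vitali or by norm convergence plus weak convergence) is routine.

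The genuine difference is in the last step, the bound $|DA|\le C|\Curl A|$. You correctly observe that the abstract compactness only yields $|DA|\le C\mu$ with $\mu$ the weak$^*$ limit of $|\Curl A_j|$, and then close the gap by an explicit $SO(2)$ computation: lift $A=R(\theta)$, use the BV chain rule on the diffuse part and a trigonometric identity on the jump part to obtain $|DA|=\sqrt{2}\,|\Curl A|$. This is correct (your jump computation, in particular the $\nu$-independence of $|\Curl A|$, checks out), and it even gives the sharp constant. The paper takes a much shorter route: once you know $A\in BV(U;SO(2))$, apply the rigidity estimate Theorem~\ref{rigidity2} directly to the limit $A$ on each cube $Q(x,r)$; since $\dist(A,SO(2))=0$ a.e., this gives $\|A-R\|_{L^2(Q)}\le C|\Curl A|(Q)$ for some $R\in SO(2)$, hence $\int_Q|A-R|\le C r\,|\Curl A|(Q)$, and Lemma~\ref{abstract1} applied to $u=A$, $\lambda=C|\Curl A|$ yields $|DA|\le C|\Curl A|$ in one line. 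Your approach buys the explicit constant $\sqrt{2}$ at the price of invoking the BV lifting for $S^1$-valued maps and the chain rule; the paper's approach is more robust (it only uses the already-established abstract lemmas and rigidity) but gives an unspecified dimensional constant.
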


\begin{proof}
	We want to apply Corollary \ref{abstract2} with $n=2$, $N=2\times 2$, $u_j=A_j$, $u=A$ a $L^1$-weak limit of a subsequence of $A_j$, and $K=SO(2)$.
	First notice that, since
	\begin{equation}\label{e.facile}
		|A_j| \leq \dist(A_j,SO(2)) + \sqrt{2},
	\end{equation}
	we have that
	$A_j$ is bounded in $L^{2}(U;\R^{2\times 2})$ and therefore, up to a subsequence, it converges to some $A$ weakly in $L^1$.
	From the rigidity estimate \eqref{eq-rigidity}, we obtain that for every cube $Q=Q(x,r)\subset\Omega$ there exists a rotation $R^j_Q\in SO(2)$ such that
	$$
	\|A_j-R^j_Q\|_{L^{2}(Q;\R^{2\times 2})}\leq C\left(\|\dist(A_j,SO(2))\|_{L^{2}(Q)}+|\Curl A_j|(Q)  \right).
	$$
	Then, using H\"older inequality,
	\begin{equation}\label{Ajrotation}
		\int_Q|A_j-R^j_Q|\, dx\leq r\, C\left(\|\dist(A_j,SO(2))\|_{L^{2}(Q)}+|\Curl A_j|(Q)  \right).
	\end{equation}
	This implies that we can apply Corollary \ref{abstract2}  with
	\[
	\gamma_j =C\|\dist(A_j,SO(2))\|_{L^{2}(U)},\qquad \lambda_j= C|\Curl A_j|,
	\]
	and $\lambda$ a weak$^*$ limit of a subsequence (not relabelled) of $\lambda_j$,
	thus infering that $$A\in BV(U;SO(2)).$$
	In order to prove that $A_j \to A$ in $L^2(U;\R^{2\times 2})$, we notice that by the triangular inequality \eqref{e.facile}
	we have strong convergence of the $L^2$-norms
	\begin{align*}
		2 |U|&=\int_{U} |A|^2dx \leq   \liminf_{j\to \infty} \int_{U} |A_j|^2dx\\
		& \leq \liminf_{j\to \infty} \int_{U} \left( \sqrt{2}+ \dist(A_j,SO(2))\right)^2dx = 2 |U|,
	\end{align*}
	because $\dist(A_j,SO(2))\to 0$ in $L^2(U;\R^{2\times 2})$.

	Finally, \eqref{e.rotore controlla gradiente} is a direct consequence of Lemma \ref{abstract1} applied with $u=A$, $\lambda = C|\Curl A|$: for each square $Q(x,r)$ we set $u_{Q(x,r)}=R\in SO(2)$ to be the rotation provided by the rigidity Theorem \ref{rigidity2}; then,
	\[
	\int_{Q(x,r)}|A(y)-R|\, dy\leq 2r\,\|A-R\|_{L^{2}(Q(x,r))}\leq C r \,|\Curl A|(Q(x,r)),
	\]
	and Lemma \ref{abstract1} gives $|DA|\leq C |\Curl A|$.
\end{proof}

Joining together the previous lemmas we get the following compactness result in $BV$ for sequence of fields $A_\eps$ with equi-bounded $F_\eps$ energies.

\begin{proposition}\label{p.compactness1}
For every sequence of matrix fields $(A_\eps)_{\eps>0}$ such that \[\limsup_{\eps\to0^+} F_\eps(A_\eps) < +\infty,\] there exists a subsequence $\eps_n\to 0^+$ such that $A_{\eps_n}\chi_{\Omega\setminus S_{A_\eps}}$ converge in $L^2(\Omega;\R^{2\times 2})$ to a field $A\in BV(\Omega;SO(2))$ and $|DA|\leq C|\curl A|$ for a dimensional constant $C>0$.
\end{proposition}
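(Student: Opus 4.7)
The strategy is to deduce the statement from the abstract $BV$-compactness result of Lemma \ref{l.proposition3}, applied to the refined field $A''_\eps$ provided by Lemma \ref{l.regolarizzazione rotore}, rather than to $A_\eps$ itself (which need not be bounded and for which we have no direct measure control on $\Curl A_\eps$). Fix an exhaustion $\Omega^{(k)} \subset\subset \Omega^{(k+1)} \subset\subset \Omega$ with $\bigcup_k \Omega^{(k)} = \Omega$. For each $k$ and every $\eps$ smaller than $C^{-1}\dist(\Omega^{(k)}, \partial\Omega)$, apply Lemma \ref{l.regolarizzazione rotore} to the pair $(A_\eps, S_{A_\eps})$ of Remark \ref{r.S}, obtaining $(A''_\eps, S''_\eps) \in \mathcal{C}_\eps(\Omega^{(k)})$ which satisfies $\|A''_\eps\|_{L^\infty(\Omega^{(k)})} \leq C$, the estimate
\[
\F_\eps(A''_\eps, S''_\eps; \Omega^{(k)}) + \|A''_\eps - A_\eps\chi_{\Omega\setminus B_{\lambda\eps}(S_{A_\eps})}\|_{L^2(\Omega^{(k)})}^2 \leq C\,\F_\eps(A_\eps, S_{A_\eps}) \leq C\eps\bigl(F_\eps(A_\eps) + \eps\bigr),
\]
and the pointwise Curl control $|\Curl A''_\eps| \leq \frac{C}{\lambda\eps}\chi_{S''_\eps}$.

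Next, I would verify the two hypotheses of Lemma \ref{l.proposition3} on $U = \Omega^{(k)}$. Splitting $\int_{\Omega^{(k)}} \dist^2(A''_\eps, SO(2))\,dx$ over $B_{\lambda\eps}(S''_\eps)$ and its complement, the $L^\infty$-bound on $A''_\eps$ dominates the integrand by a constant on the former (whose Lebesgue measure is the core energy, hence $O(\eps)$), while on the latter the elastic part of $\F_\eps$ gives a contribution of $O(\eps)$, yielding $\|\dist(A''_\eps, SO(2))\|_{L^2(\Omega^{(k)})}^2 = O(\eps) \to 0$. For the Curl, using $S''_\eps \subset B_{\lambda\eps}(S''_\eps)$,
\[
|\Curl A''_\eps|(\Omega^{(k)}) \leq \tfrac{C}{\lambda\eps}\,|S''_\eps \cap \Omega^{(k)}| \leq \tfrac{C}{\lambda\eps}\,\F_\eps(A_\eps, S_{A_\eps}) \leq \tfrac{C}{\lambda}\bigl(F_\eps(A_\eps) + \eps\bigr),
\]
which is uniformly bounded by hypothesis. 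Lemma \ref{l.proposition3} then provides, along a subsequence, $A''_\eps \to A^{(k)}$ in $L^2(\Omega^{(k)};\R^{2\times 2})$ with $A^{(k)} \in BV(\Omega^{(k)}; SO(2))$ and $|DA^{(k)}| \leq C|\Curl A^{(k)}|$.

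Finally, from the $L^2$-closeness displayed above, the truncated sequence $A_\eps \chi_{\Omega\setminus B_{\lambda\eps}(S_{A_\eps})}$ has the same $L^2$-limit $A^{(k)}$ on $\Omega^{(k)}$. A standard diagonal extraction over $k$ yields a single subsequence $\eps_n \to 0$ and a field $A$ with $A|_{\Omega^{(k)}} = A^{(k)}$ for every $k$, so that $A \in BV_\loc(\Omega; SO(2))$. Since $|A| \equiv \sqrt{2}$ almost everywhere, $A$ is globally bounded, and the lower semicontinuity of the total variation together with the uniform bound on $|\Curl A''_{\eps_n}|(\Omega^{(k)})$ gives $|\Curl A|(\Omega) < \infty$; the estimate $|DA| \leq C|\Curl A|$ then globalises, yielding $A \in BV(\Omega; SO(2))$. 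The most delicate point throughout is the control of $\dist^2(A''_\eps, SO(2))$ on the core $B_{\lambda\eps}(S''_\eps)$, where the elastic term gives no information: it is the uniform $L^\infty$-bound of Lemma \ref{l.regolarizzazione rotore}(i)---which $A_\eps$ itself does not enjoy---that makes this estimate work.
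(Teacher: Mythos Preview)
Your approach is essentially the same as the paper's: refine via Lemma~\ref{l.regolarizzazione rotore}, verify the hypotheses of Lemma~\ref{l.proposition3} on compactly contained subdomains, and run a diagonal argument. One small point you do not make explicit: the diagonal extraction only yields $L^2_{\loc}(\Omega)$ convergence of $A_{\eps_n}\chi_{\Omega\setminus B_{\lambda\eps}(S_{A_{\eps_n}})}$, whereas the Proposition claims convergence in $L^2(\Omega)$. The paper closes this gap by the convergence-of-norms argument: since $A$ takes values in $SO(2)$ one has $\|A\|_{L^2(\Omega)}^2 = 2|\Omega|$, and on the other hand $\|A_{\eps_n}\chi_{\Omega\setminus B_{\lambda\eps}(S_{A_{\eps_n}})}\|_{L^2(\Omega)}^2 \to 2|\Omega|$ because both $\|\dist(A_{\eps_n},SO(2))\|_{L^2(\Omega\setminus B_{\lambda\eps}(S_{A_{\eps_n}}))}$ and $|B_{\lambda\eps}(S_{A_{\eps_n}})|$ are $O(\eps)$; strong $L^2_{\loc}$ convergence together with convergence of the full $L^2(\Omega)$ norms then upgrades to strong $L^2(\Omega)$ convergence.
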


\begin{proof}
Applying Lemma \ref{l.regolarizzazione rotore} to $(A_\eps, S_{A_\eps})$, for every subdomain $\Omega_1\subset\subset\Omega$ there exist  pairs $(A''_\eps, S''_\eps)\in \mathcal{C}_\eps(\Omega_1)$ satisfying both conditions of Lemma \ref{l.proposition3} in $U=\Omega_1$, because
\begin{equation}
\begin{split}
\int_{\Omega_1}\dist^2(A''_\eps,SO(2))\,dx&\leq \int_{\Omega_1\setminus B_{\lambda\eps}(S''_\eps)}\dist^2(A''_\eps,SO(2))\,dx + C|B_{\lambda\eps}(S''_\eps)|\\
&\leq C\F_\eps(A_\eps,S_{A_\eps})\leq C\eps,
\end{split}
\end{equation}
and
$$
|\Curl (A''_\eps)|(\Omega_1)\leq \frac{C}{\lambda\eps} |B_{\lambda\eps}(S''_\eps)|\leq C.
$$
Therefore, there exists a subsequence $A''_{\eps_n}$ strongly converging in $L^2(\Omega_1;\R^{2\times 2})$ to a matrix field $A\in BV(\Omega_1;SO(2))$ with $|DA|\leq C|\Curl(A)|$ in $\Omega_1$.

From the arbitrariness of $\Omega_1$, by a diagonal argument there exists a subsequence (not relabeled) and a field $A\in BV_\loc(\Omega;SO(2))$ such that $A''_{\eps_n}$ converge $L^2_\loc(\Omega;\R^{2\times 2})$ to $A$.
In particular, from Lemma \ref{l.regolarizzazione rotore} we deduce that $A_{\eps_n}\chi_{\Omega\setminus B_{\lambda\eps}(S_{A_{\eps_n}})}$ converge $L^2_\loc(\Omega;\R^{2\times 2})$ to $A$.
Moreover, we conclude the strong convergence in $L^2(\Omega;\R^{2\times 2})$ because of the convergence of the norms:
\begin{equation*}
\begin{split}
\|A\|_{L^2(\Omega)} &= \left(2|\Omega|\right)^{\frac12} \leq \liminf_{n\to+\infty} \|A_{\eps_n}\chi_{\Omega\setminus B_{\lambda\eps}(S_{{\eps_n}})}\|_{L^2(\Omega)}\\&\leq \|\dist(A_{\eps_n}, SO(2))\|_{L^2(\Omega\setminus B_{\lambda\eps}(S_{{\eps_n}})}+\left(2|\Omega|\right)^{\frac12}=\left(2|\Omega|\right)^{\frac12}.
\end{split}
\end{equation*}
Clearly, $A$ must belong to $BV(\Omega;SO(2))$ and satisfies $|DA|\leq C|\Curl(A)|$ in $\Omega$.
\end{proof}

\subsection{Logarithmic estimate and SBV-compactness}

The proof of the compactness in $SBV$ uses the main result of Lauteri--Luckhaus stated below.

\begin{theorem}[{\cite[Theorem 3, Remark 2]{LL2}}]\label{t.LL}
For every $\Omega_0\subset\subset\Omega$ open, there exist constants $C>0$ and $\delta\in (0,1)$ with the following property:
Let $A\in L^1(\Omega;SO(2))$ and $\mu$ be a finite measure in $\Omega$ such that
\begin{itemize}
\item[(i)] there exist $\eps_n\to 0^+$ and $(\tilde A_n, \tilde S_n)\in \mathcal{C}_{\eps_n}(\Omega_0)$ with $\limsup_{\eps_n\downarrow0^+} \eps_{n}^{-1}\F_{\eps_n}(\tilde A_n, \tilde S_n;\Omega_0) < +\infty$ and $\tilde A_n \to A$ in $L^1(\Omega_0)$;
\item[(ii)] $\tilde A_{n}$ are harmonic  in $\Omega_0\setminus B_{\lambda\eps}(\tilde S_{n})$;
\item[(iii)] $\mu$ is the weak$^*$ limit of the measures
\[\mu_n:=  \left(\frac{1}{\eps_n}\dist^2(\tilde A_{n},SO(2)) \chi_{\Omega_0\setminus B_{\lambda\eps_n}(\tilde S_{{n}})}  + \frac{1}{\lambda\eps_n} \chi_{B_{\lambda\eps_n}(\tilde S_{{n}})}\right)\mathcal{L}^2\LL \Omega_0,\]
\item[(iv)]
\begin{equation*}\label{eq-remark2-LL}
|\Curl \tilde A_{n}|\leq \frac{C_0}{\eps_n}\chi_{\tilde S_{A_{n}}}\qquad \hbox{in } \Omega_0.
\end{equation*}
\end{itemize}
Then, for every $x\in \Omega_0$ and for every $R>0$ such that $B_{2R}(x)\subset \Omega_0$, there exists $\rho = \rho(x,R) \in [R, 2R]$ such that
\begin{equation}\label{e:pseudolinear}
|\Curl(A)(B_{\rho}(x))|\leq C_0 \,\omega\left(\frac{\mu(\overline {B_{2R}(x)})}{4R}\right)\mu(\overline{B_{2R}(x)}),
\end{equation}
where $\omega:(0,\infty)\to (0,\infty)$ is the continuous increasing function defined by
\begin{equation}
\omega(t)=\begin{cases}
|\log(\delta)|^{-1} &\ \ \hbox{if}\ \ t\geq \delta,\\
|\log(t)|^{-1} & \ \ \hbox{if}\ \ t< \delta.
\end{cases}
\end{equation}
\end{theorem}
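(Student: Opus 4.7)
The plan is to follow the multi-scale strategy that underlies the Read--Shockley scaling $\theta|\log\theta|$ for small-angle grain boundaries. The right-hand side of \eqref{e:pseudolinear} is exactly what one would expect if a ``net rotation defect'' of size $|\Curl(A)|(B_\rho(x))$ were accommodated optimally inside the annulus $B_{2R}(x)\setminus B_\rho(x)$: the elastic plus core cost stored there must scale at least linearly in the defect, with an additional logarithmic factor once the density is small. The ingredients are the Burgers quantization (H2), the rigidity of incompatible fields (Theorem \ref{rigidity2}), and the harmonic structure provided by hypotheses (ii) and (iv).

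The first step is a Fubini/coarea selection on the radial variable. Using the pointwise bound in (iv) on $|\Curl\tilde A_n|$ together with the finiteness of $\mu_n$, for every $n$ large one can select $\rho_n\in[R,2R]$ such that $\partial B_{\rho_n}(x)\cap B_{\lambda\eps_n}(\tilde S_n)=\emptyset$, the trace of $\tilde A_n$ on $\partial B_{\rho_n}(x)$ is well defined in $L^2$ with norm controlled by $\mu_n(\overline{B_{2R}(x)})/R$, and $\mu(\partial B_{\rho_n}(x))=0$. A standard diagonal argument then produces a single good radius $\rho\in[R,2R]$ along a (not relabelled) subsequence. On such a circle the Burgers quantization (H2) forces $\int_{\partial B_{\rho}(x)}\tilde A_n\in\tau\eps_n\Z^2$, and combining the strong $L^1$-convergence $\tilde A_n\to A$ with the trace control yields, in the limit, $\int_{\partial B_{\rho}(x)} A = \Curl(A)(B_\rho(x))$ up to an error of order $\eps_n$.

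The heart of the proof is the dyadic estimate for this circulation. Decompose the annulus $B_{2R}(x)\setminus B_{\rho}(x)$ into nested annuli $C_k = B_{r_{k+1}}(x)\setminus B_{r_k}(x)$ with geometrically increasing scales $r_k$ up to $r_K\sim R$. On each $C_k$, apply the incompatible-field rigidity of Theorem \ref{rigidity2} to the harmonic replacement provided by Lemma \ref{l.armonica}, obtaining a rotation $R_k^n\in SO(2)$ with $\|\tilde A_n-R_k^n\|_{L^2(C_k)}^2\lesssim \mu_n(C_k)\,r_k$. The telescoping sum $\sum_k|R_{k+1}^n-R_k^n|$ together with the internal trace control bounds $|\int_{\partial B_\rho(x)}\tilde A_n|$, and modulo $\tau\eps_n$ this forces $|\Curl(A)(B_\rho(x))|$ to be small. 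The logarithmic improvement arises by optimising the truncation of the multi-scale sum: when the averaged density $t=\mu_n(\overline B_{2R}(x))/(4R)$ is smaller than $\delta$, stopping the dyadic sum at the scale where the rigidity estimate ceases to beat the trivial bound $\mu_n(\overline B_{2R}(x))$ produces exactly the gain $1/|\log t|$ encoded in $\omega$. This is the point where the quantization of the Burgers vector is decisive: without it, one could merely accommodate the defect with a uniformly distributed smooth distortion, losing the logarithmic factor.

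The main obstacle is implementing the dyadic rigidity uniformly across scales in the presence of the defect sets $\tilde S_n$, whose geometry at each scale may be intricate; annuli cut by $\tilde S_n$ would, in principle, have unbounded rigidity constants. This is overcome by combining hypotheses (ii) and (iv) with the refinement procedure of Section \ref{s.refinement}: enlarging the core region to absorb the defects at each dyadic step yields modified annular domains of controlled shape on which Theorem \ref{rigidity2} applies with a uniform constant. A second, more delicate, issue is the simultaneous passage to the limit $n\to\infty$ and the choice of truncation scale; this is handled via the weak-$*$ convergence $\mu_n\weakstar\mu$ together with the fact that, on the set of good radii, both the trace convergence $\tilde A_n\to A$ on $\partial B_\rho(x)$ and the continuity of the Curl slice hold.
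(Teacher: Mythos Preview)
First, note that the paper does not prove this theorem: it is quoted verbatim from Lauteri--Luckhaus \cite{LL2} (Theorem~3 and Remark~2 there) and used as a black box in the proof of Proposition~\ref{prop-lower-bound}. So there is no ``paper's own proof'' to compare against; the relevant question is whether your sketch plausibly reconstructs the argument of \cite{LL2}.

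It does not, and the gap is in your dyadic step. You propose to decompose the annulus $B_{2R}(x)\setminus B_\rho(x)$ into nested annuli $C_k=B_{r_{k+1}}(x)\setminus B_{r_k}(x)$ with geometrically increasing radii. But $\rho\in[R,2R]$ by hypothesis, so the ratio of outer to inner radius of $B_{2R}(x)\setminus B_\rho(x)$ is at most $2$: there is at most one dyadic scale in that annulus, and a telescoping sum $\sum_k|R^n_{k+1}-R^n_k|$ over those $C_k$ cannot produce any logarithmic factor. Your ``truncation of the multi-scale sum'' therefore has nothing to truncate. The genuine multi-scale structure in the Lauteri--Luckhaus proof runs from the microscopic scale $\eps_n$ up to the macroscopic scale $R$, and is organised around the connected components of $\tilde S_n$ inside $B_\rho(x)$ via a ball-construction in the spirit of Jerrard and Sandier for Ginzburg--Landau vortices. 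Roughly: by (H2) each defect cluster carries circulation in $\tau\eps_n\Z^2\setminus\{0\}$; around a cluster with circulation $b$ the harmonic field (hypothesis (ii)) stores elastic energy $\gtrsim|b|^2$ in every dyadic annulus disjoint from the other clusters, hence $\gtrsim|b|^2\log(d/\eps_n)$ between the core scale and the nearest-neighbour scale $d$. Growing and merging the balls, summing these contributions, and comparing with $\mu_n(\overline{B_{2R}(x)})$ produces a bound of the form $\sum_j|b_j|\lesssim \mu_n(\overline{B_{2R}(x)})/|\log t|$ when the density $t=\mu_n(\overline{B_{2R}(x)})/(4R)$ is small; passing to the limit gives \eqref{e:pseudolinear}. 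Your outline never descends to scale $\eps_n$ and so never invokes the quantization (H2) at the only place where it bites, namely to force each cluster to cost at least $\eps_n^2$ per decade of scale. Without that, the best your rigidity-on-one-annulus argument can yield is the trivial bound $|\Curl(A)(B_\rho(x))|\lesssim\mu(\overline{B_{2R}(x)})$, with no $\omega$.
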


The missing step for the proof of Theorem \ref{t.compactness} is the following result.

\begin{proposition}\label{prop-lower-bound}
Let $(A_{\eps_n})_n\in L^1(\Omega;\R^{2\times 2})$ be a sequence such that
$$
\lim_{n\to +\infty} F_{\eps_n}(A_{\eps_n})<+\infty
\quad \text{and}\quad A_{\eps_n}\chi_{S_{A_{\eps_n}}} \stackrel{L^1}{\longrightarrow} A.
$$
Then, $A$ is a micro-rotation according to Definition \ref{def-micro-rotation} and for every $\Omega_0\subset\subset\Omega$ there exists a constant $C=C(\Omega_0,\Omega)>0$ such that
\begin{equation}
\int_{J_A\cap\Omega_0}|A^+-A^-| |\log(|A^+-A^-|)| d\Ha^1 \leq C\liminf_{n\to +\infty} F_{\eps_n}(A_\eps).
\end{equation}
\end{proposition}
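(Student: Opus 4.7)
The strategy is to transfer the Lauteri--Luckhaus logarithmic estimate of Theorem \ref{t.LL} from the sequence $A_{\eps_n}$ to a pointwise bound at $\Ha^1$-a.e.\ jump point of the limit $A$, and then to integrate it via a Besicovitch--Vitali covering of $J_A$. The first step is to fix $\Omega_0\subset\subset\Omega$ and replace $(A_{\eps_n},S_{A_{\eps_n}})$ on an intermediate subdomain $\Omega_0\subset\subset\Omega_1\subset\subset\Omega$ by the harmonic competitors $(\tilde A_n,\tilde S_n)\in \mathcal{C}_{\eps_n}(\Omega_0)$ furnished by Lemma \ref{l.armonica}. These satisfy the controlled-Curl condition (iv) and the harmonicity condition (ii) of Theorem \ref{t.LL}, and they still converge to $A$ in $L^2(\Omega_0)$ with rescaled energy bounded by $CF_{\eps_n}(A_{\eps_n})+O(\eps_n)$. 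The associated energy measures $\mu_n$ as in hypothesis (iii) then have uniformly bounded mass, so up to a subsequence $\mu_n\weakstar\mu$ with $\mu(\overline{\Omega_0})\leq C\liminf_n F_{\eps_n}(A_{\eps_n})$. By Proposition \ref{p.compactness1} we already know $A\in BV(\Omega_0;SO(2))$ with $|DA|\leq C|\Curl A|$.

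Theorem \ref{t.LL} then gives, for every $x\in\Omega_0$ and $R$ with $B_{2R}(x)\subset\Omega_0$, a radius $\rho\in[R,2R]$ with
\[
|\Curl A|(B_\rho(x))\leq C\,\omega(t(x,R))\,\mu(\overline{B_{2R}(x)}),\qquad t(x,R):=\frac{\mu(\overline{B_{2R}(x)})}{4R}.
\]
Set $f(x):=|A^+(x)-A^-(x)|$. For $\Ha^1$-a.e.\ $x\in J_A$ the standard $BV$ blow-up at a jump point gives $|DA|(B_r(x))/(2r)\to f(x)$, and combined with $|DA|\leq C|\Curl A|$ this yields $\liminf_{r\to 0^+}|\Curl A|(B_r(x))/(2r)\geq cf(x)$. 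Plugging into the log estimate with $\omega(t)=|\log t|^{-1}$, valid once $t<\delta$, and inverting elementarily the map $s\mapsto s/|\log s|$ near $0$, I obtain for all sufficiently small $R$ the key pointwise inequality
\[
\mu(\overline{B_{2R}(x)})\geq c'\,R\,f(x)\,|\log f(x)|.
\]
A Besicovitch--Vitali covering of the countably $\Ha^1$-rectifiable set $J_A\cap\Omega_0$ by pairwise disjoint closed balls $\{\overline{B_{2R_i}(x_i)}\}$ with $\sum_i 2R_i\leq (1+\eta)\Ha^1(J_A\cap\Omega_0)$ and $x_i$ good, summed over $i$, gives
\[
\int_{J_A\cap\Omega_0}\! f|\log f|\,d\Ha^1\leq C\sum_i\mu(\overline{B_{2R_i}(x_i)})\leq C\mu(\overline{\Omega_0})\leq C'\liminf_n F_{\eps_n}(A_{\eps_n}),
\]
which is the sought logarithmic lower bound.

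The remaining point, and the main obstacle, is to conclude that $A$ is actually a micro-rotation, namely $DA=D^jA$. The plan here is to run the same logarithmic selection at generic points of the diffuse parts $|DA|^a$ and $|DA|^c$: at such a point the ratio $|\Curl A|(B_r(x))/r$ must still satisfy a density lower bound dictated by $|DA|\leq C|\Curl A|$, and combined with the log estimate it forces $\mu$ to carry a positive density along a $|DA|^d$-positive set; a suitable Besicovitch--Vitali covering of the support of $|DA|^d$ would then extract infinite mass from the finite measure $\mu$, yielding a contradiction. The delicacy lies in obtaining a usable lower bound at Cantor-type points, where no atoms and irregular blow-ups are allowed, and in exploiting the fact that the logarithmic gain degenerates only when $t(x,R)$ is not too small; this requires a careful simultaneous choice of radii along the monotone-in-$R$ selection of $\rho(x,R)$ provided by Theorem \ref{t.LL}, which is what I expect to be the hardest part of the argument.
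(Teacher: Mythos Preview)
Your reduction via Lemma \ref{l.armonica} and your plan for the logarithmic integral bound on $J_A$ are essentially the paper's approach, and the Besicovitch/Vitali covering of $J_A$ can be made to work (the paper phrases it slightly differently, as a pointwise measure inequality $f|\log f|\,\Ha^1\LL J_A\leq C\Theta^*_1\,\Ha^1\LL J_A\leq C\mu$, and then simply integrates). One technical slip: Theorem \ref{t.LL} controls $|\Curl A(B_\rho(x))|$, the modulus of the vector $\Curl A(B_\rho(x))$, \emph{not} the total variation $|\Curl A|(B_\rho(x))$. At jump points this is harmless because the blow-up of $\Curl A$ has a definite direction, so the two agree in the limit; but you should write the estimate correctly and insert this remark.

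The genuine gap is your argument that $A$ is a micro-rotation. Your sketch (``force $\mu$ to carry positive density along a $|DA|^d$-positive set and extract infinite mass'') does not work: at points of the Cantor part one has no a priori lower bound on $|DA|(B_r(x))/r$, and the logarithmic estimate of Theorem \ref{t.LL} only yields that $|\Curl A(B_\rho)|$ is $o(R)$ wherever the upper $1$-density $\Theta^*_1(\mu,x)$ vanishes---this is a \emph{smallness}, not a \emph{largeness}, statement, so no infinite-mass contradiction follows. Moreover, the distinction between $|\Curl A(B)|$ and $|\Curl A|(B)$ is now essential and cannot be ignored as at jump points.

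The paper's route is cleaner and you should adopt it: define $\Sigma:=\{x:\Theta^*_1(\mu,x)>0\}$, observe that $\Sigma$ is $\Ha^1$ $\sigma$-finite by the standard density bound, and show directly that $|\Curl A|(\Omega_0\setminus\Sigma)=0$. For this one works on the sets
\[
E_{k,\delta}=\Big\{x:\ \Theta_1(\mu,x,2r)<\delta\ \text{and}\ |\Curl A(B_r(x))|\geq \tfrac12|\Curl A|(B_r(x))\ \ \forall\,r\leq \tfrac2k\Big\},
\]
the second condition holding $|\Curl A|$-a.e.\ by Radon--Nikodym; a covering argument (Lemma \ref{l.covering1}) combined with Theorem \ref{t.LL} then gives $|\Curl A|(E_{k,\delta})\leq C\,\omega(\delta)\,\mu(\Omega_0)$, and letting $k\to\infty$, $\delta\to0$ yields the claim. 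Since $|DA|\leq C|\Curl A|$ by Proposition \ref{p.compactness1}, $DA$ is concentrated on an $\Ha^1$ $\sigma$-finite set, and the $BV$ structure theorem forces $DA=D^jA$.
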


\begin{proof}
We divide the proof into three steps.

\emph{Step 1: estimating $\Curl(A)$.}
We fix $\Omega_0\subset\subset \Omega$. From Lemma \ref{l.armonica} there exists a sequence $A^\harm_n$ converging to $A$ in $L^1(\Omega_0)$, such that $F_{\eps_n}(A^\harm_n;\Omega_0)\leq C F_{\eps_n}(A_{\eps_n})$ and such that (i)-(iv) of Theorem \ref{t.LL} hold. We then deduce that
\begin{equation}\label{e:pseudolinear-2}
|\Curl(A)(B_{\rho}(x))|\leq C_0\,\omega\left(\frac{\mu(\overline {B_{2R}(x)})}{4R}\right)\mu(\overline{B_{2R}(x)}),
\end{equation}
where $\mu$ is any weak$^*$ limit of the energy density of a subsequence of $A^\harm_n$ (not relabeled).

\emph{Step 2: dimension of $\Curl(A)$.}
Let $\Theta^*_1$ be the upper $1$-density of  $\mu$, i.e.,
$$
\Theta^*_1(x):=\limsup_{r\to 0^+} \Theta_1(x,r),\qquad\text{where } \Theta_1(x,r):=\frac{\mu(\overline{B_r(x)})}{2r},\qquad x\in \Omega_0.
$$
The set
\[
\Sigma:=\{x\in \Omega_0:\Theta^*_1(x)>0\}
=\bigcup_{n\geq 1}\left\{x\in \Omega: \Theta^*_1(x)>\frac{1}{n}\right\}
\]
is $\Ha^1$ $\sigma$-finite, because by Besicovitch Theorem
\[
\Ha^1\left(\left\{x\in \Omega_0: \Theta^*_1(x)>\frac{1}{n}\right\}\right)\leq n\mu(\Omega)<+\infty.
\]
We prove that
$|\Curl(A)|(\Omega_0\setminus \Sigma) =0$.
To this purpose, for every $k\in \N\setminus\{0\}$ and $\delta>0$ consider the sets
\begin{multline*}
E_{k,\delta}= \left\{x\in \Omega_0:\Theta_1(x,2r)<\delta\quad\textup{and}\quad
|\Curl(A)(B_r(x))|\geq |\Curl(A)|(B_r(x))/2\quad\forall\,r\leq \frac2k \right\}.
\end{multline*}
By definition we have that $E_{k,\delta}\subset E_{k+1,\delta}$ for every $k$ and $E_{k,\delta}\subset E_{k,\delta'}$ for every $\delta <\delta'$.
Moreover,
\begin{equation}
\bigcap_{\delta>0}\bigcup_{k\geq 1} E_{k,\delta} \supset	\left\{x:\Theta^*_1(x)=0\;\textup{and} \  |\rho_A(x)|\geq 1/2 \right\}\cap \mathcal{N},
\end{equation}
where $|\Curl(A)|(\mathcal{N})=0$ and
\[
\rho_A (x)=\liminf_{r\to 0}\frac{\Curl(A)(B_r(x))}{|\Curl(A)|(B_r(x))}.
\]
In particular, since by Radon-Nikodym Theorem $|\rho_A|=1$ for $|\Curl(A)|$-a.e.~$x$, we have that
\begin{equation}\label{e.limite}
\begin{split}
|\Curl(A)|(\Omega\setminus \Sigma) &= |\Curl(A)|\big(\big\{x:\Theta^*_1(x)=0\;\textup{and} \  |\rho_A(x)|\geq 1/2 \big\}\big)\\
&\leq \lim_{\delta\to0} \lim_{k\to +\infty}|\Curl(A)|(E_{k,\delta}).
\end{split}
\end{equation}
We now apply Lemma \ref{l.covering1} to the measure $\nu= |\curl(A)|\LL \bigcup_{x\in E_{k,\delta}}B_{1/(2k)}(x)$: there exists a subfamily $\{B_{1/(2k)}(x_i)\}_{i\in I}$ such that $\{B_{2/k}(x_i)\}_{i\in I}$ are disjoint, and
\begin{align*}
|\Curl(A)|\left(\bigcup_{i\in I}B_{1/k}(x_i)\right)
&\geq\eps_0 |\Curl(A)|\left( \bigcup_{x\in E_{k,\delta}}B_{1/(2k)}(x)\right)\\
&\geq \eps_0 |\Curl(A)|\left(  E_{k,\delta}\right).
\end{align*}
Then, if $\rho(x_i,\frac{1}{k})\in [{1/k},{2/k}]$ is the radius provided in Theorem \ref{t.LL}, we infer that
\begin{align*}
\eps_0 |\Curl(A)|\left(  E_{k,\delta}\right) & \leq |\Curl(A)|\left(\bigcup_{i\in I}B_{1/k}(x_i)\right)\\
&= \sum_{i\in I} |\Curl(A)|(B_{1/k}(x_i))\\
&\leq \sum_{i\in I} |\Curl(A)|(B_{\rho(x_i,\frac{1}{k})}(x_i))\\
&\leq 2\sum_{i\in I} |\curl(A)(B_{\rho(x_i,\frac{1}{k})}(x_i))| \leq 2\sum_{i\in I}  \omega\Big(\Theta_1\left(x,\frac{2}{k}\right)\Big) \mu\Big(\overline{B_{\frac{2}{k}}(x_i)}\Big)\\
&\leq 2\omega(\delta) \mu(\R^n).
\end{align*}
From \eqref{e.limite} and the arbitrariness of $\delta$,  we then infer that $|\curl(A)|(\Omega\setminus \Sigma)=0$.

\emph{Step 3: Structure Theorem.}
Recall that  $A\in BV(\Omega;SO(2))$, $|DA|\leq C |\Curl(A)|$ by Proposition \ref{p.compactness1} and $|\Curl(A)|$ is supported on the $1$-dimensional set $\Sigma$. Thus, we deduce from the structure theorem for $BV$ functions that $DA$ has only the jump part and the jump set $J_A$ satisfies $\Ha^1(J_A\setminus \Sigma) = 0$. Therefore,
\[
\Curl A= (A^+-A^-)\nu_A^\perp \Ha^1\res J_A,
\]
and for $\Ha^1$-a.e. $x\in J_A$ we have that
\begin{align*}
|A^+(x)-A^-(x)| &= \lim_{r\to 0}\frac{|DA|(B_{r/2}(x))}{r} \\
&\leq C\limsup_{r \to 0}\frac{|\Curl(A)|(B_{\rho(x,r)}(x))}{2\rho(x,r)}\\
&=C \limsup_{r \to 0}\frac{|\Curl(A)(B_{\rho(x,r)}(x))|}{2\rho(x,r)}\\
&\leq C\limsup_{r \to 0}\omega(\Theta_1(x,2r))\frac{\mu(\overline{B_{2r}(x)})}{4 r}\\
& = C \omega(\Theta^*_1(x))\Theta^*_1(x),
\end{align*}
where we have used that both $|DA|$ and $|\curl(A)|$ are absolutely continuous with respect to $\Ha^1\res J_A$. Considering that the function
\[f(t)= \begin{cases}
-\frac{t}{\log t} & t\in (0,\delta),\\-\frac{t}{\log \delta} & t\in [\delta,+\infty),
\end{cases}\] is one-to-one, we can invert the relation above and infer
\[
f^{-1} (C^{-1}|A^+(x)-A^-(x)| ) \leq \Theta^*_1(x).
\]
Finally we remark that $f^{-1}(s) \geq C s |\log s|$ and therefore
\[
|A^+-A^-| |\log(|A^+-A^-|)| \Ha^1\LL J_A \leq C\Theta^*_1(x)\Ha^1\LL J_A  \leq C\mu.
\]
Integrating over $\Omega_0$ and using that
$$
\mu(\Omega_0)\leq \liminf_{n\to\infty} F_{\eps_n}(A^\harm_n;\Omega_0)\leq C\liminf_{\eps\to 0^+} F_\eps(A_\eps),
$$
we conclude the proof.
\end{proof}

\section{The cell problem formula}\label{sec-cell-problem}

In this section we define the energy per unit of length of a straight interface with normal vector $n\in \mathbb{S}^1$
separating two grains with constant rotations $R^-,R^+\in SO(2)$.
To this aim we need to study two sorts of cell problems, suited for the $\Gamma\hbox{-}\limsup$ and the  $\Gamma\hbox{-}\liminf$  inequalities, respectively: the former is a thermodynamic limit for minimizers with fixed boundary conditions; the latter accounts for the computation of the asymptotic optimal energy of
a single straight interface.

In view of Remark \ref{rem-invariance-rotation} it is enough to consider $n=e_1$, the general line tension will be obtained by a change of variable.

For the purpose of this section, we introduce the following notation:
 \begin{gather*}
 I_{R^-,R^+}(x):=R^-\chi_{\{x_1 <0\}}(x) +R^+\chi_{\{x_1\geq0\}}(x).
 \end{gather*}
Furthermore, we recall the following  scaling property:
consider a pair $(A,S)\in \mathcal{C}_\eps(Q_r)$, where $Q_r$ is the open square centered at $0$ of side $2r$, i.e.,
\[
Q_r=\Big(-r,r\Big)\times \Big(-r,r\Big),
\]
then
\begin{equation}\label{eq-scaling-property}
\begin{split}
&(\tilde A,\tilde S)\in \mathcal{C}_{\eps/\rho}(Q_{r/\rho}),\qquad\tilde A(y):=A(\rho y), \quad\tilde S:=\rho^{-1} S,\quad \rho>0,\\
&B_{\lambda \eps}(S)=\rho B_{\frac{\lambda\eps}{\rho}}(\tilde S) \quad\textup{and}\quad	\F_\eps(A,S, Q_r)=\rho^2 \F_{\eps/\rho}(\tilde A, \tilde S, Q_{ r/\rho}).
\end{split}
\end{equation}
In particular, if $r=1$ and $\rho=\eps$, we obtain that
\begin{equation}\label{eq-scaling-property-2}
	\eps^{-1}\F_\eps(A,S, Q_1)=\eps \F_{1}(\tilde A, \tilde S, Q_{ 1/\eps})
\end{equation}
and, as a consequence,
\begin{equation}\label{eq-scaling-property-3}
	F_\eps(A, Q_1)=\eps F_{1}(\tilde A, Q_{1/\eps}).
\end{equation}

\subsection{The thermodynamic limit}


We define the optimal transition energy at scale $L>0$ as follows
\begin{equation}\label{eq-cell-problem-eps}
	\begin{split}
\psi(R^-,R^+,L):=\inf\Big\{F_1(A,Q_L)\,:\ &A\in L^1_{loc}(\R^2;\R^{2\times 2})\,,\ \\
&A = I_{R^-,R^+} \hbox{ in } \R^2\setminus Q_{L'}, \hbox{for some } L'<L\Big\}.
	\end{split}
\end{equation}

The next proposition shows that there exists the thermodynamic limit of the optimal transition functions as the size of the square tends to infinity.

\begin{proposition}\label{prop-cell-L}
For every $R^+,R^-\in SO(2)$ 
the following limit exists
\begin{equation}\label{eq-cell-limit-2}
\psi_\infty(R^-,R^+):= \lim_{L\to +\infty}\frac{1}{2L}\psi(R^-,R^+,L).
\end{equation}
\end{proposition}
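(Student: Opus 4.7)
The plan is a standard subadditivity-plus-Fekete argument: I build a concatenation of near-optimal competitors showing that $L\mapsto\psi(R^-,R^+,L)$ is subadditive, and then invoke Fekete's lemma in its continuous form, using a trivial linear upper bound.

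For the upper bound, the pair $(I_{R^-,R^+},\{x_1=0\}\cap Q_L)\in \mathcal{C}_1(Q_L)$ is admissible: off the interface $I_{R^-,R^+}$ is a locally constant rotation, hence curl-free, and any loop in the complement of $\{x_1=0\}$ has zero circulation. The energy reduces to the core term, bounded by $4\lambda L$, so $\psi(R^-,R^+,L)\le 4\lambda L$ for every $L>0$, and $\psi(R^-,R^+,L)/(2L)$ stays uniformly bounded.

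For subadditivity, fix $L_1,L_2>0$ and $\delta>0$. Pick admissible competitors $A_i$ for $\psi(R^-,R^+,L_i)$ with $A_i=I_{R^-,R^+}$ outside $Q_{L_i'}$ for some $L_i'<L_i$ and $F_1(A_i,Q_{L_i})\le\psi(R^-,R^+,L_i)+\delta$, together with defect sets $S_i$ such that $\F_1(A_i,S_i;Q_{L_i})\le F_1(A_i,Q_{L_i})+\delta$. Setting $L:=L_1+L_2$, I define $A$ on $Q_L$ by placing a translate of $A_1$ inside the upper box $(-L_1,L_1)\times(L-2L_1,L)$, a translate of $A_2$ inside the lower box $(-L_2,L_2)\times(-L,-L+2L_2)$, and $A=I_{R^-,R^+}$ on the complement. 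The complement splits into four side-strips each contained in $\{|x_1|\ge\min(L_1,L_2)\}$, hence disjoint from the interface $\{x_1=0\}$, on which $A$ is a locally constant rotation. The two shifted boxes meet along $x_2=L-2L_1$, and on a neighborhood of this face and of their lateral faces both $A_i$ already equal $I_{R^-,R^+}$ by the boundary-data prescription, so $A$ has no new jumps. Taking $S$ to be the union of the translated defect sets, one has $S\supset\supp\Curl A$, and any closed loop in $Q_L\setminus S$ can be homotoped, within $Q_L\setminus S$, either to a loop inside one of the original $Q_{L_i}\setminus S_i$ (where (H2) holds by admissibility of $(A_i,S_i)$) or to a loop lying on one side of $\{x_1=0\}$ (zero circulation), thanks to the buffer annulus $Q_{L_i}\setminus Q_{L_i'}$ where $A_i=I_{R^-,R^+}$. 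Moreover $A=I_{R^-,R^+}$ outside $Q_{L''}$ with $L''=L-\min(L_1-L_1',L_2-L_2')<L$. Since the side-strips contribute nothing to either term of the energy and $B_\lambda(S)\supset B_\lambda(S_i)$ inside each cube, one gets
\[
\F_1(A,S;Q_L)\le \F_1(A_1,S_1;Q_{L_1})+\F_1(A_2,S_2;Q_{L_2})\le \psi(R^-,R^+,L_1)+\psi(R^-,R^+,L_2)+4\delta,
\]
and sending $\delta\to 0$ yields $\psi(R^-,R^+,L_1+L_2)\le\psi(R^-,R^+,L_1)+\psi(R^-,R^+,L_2)$.

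The continuous Fekete lemma now concludes: with $g(L):=\psi(R^-,R^+,L)$, writing $L=ns+r$ with $r\in[0,s)$, subadditivity and the linear bound give $g(L)\le n\,g(s)+g(r)\le n\,g(s)+4\lambda s$, so dividing by $L$ and letting $L\to\infty$ produces $\limsup_{L\to\infty}g(L)/L\le g(s)/s$ for every $s>0$, whence $\lim g(L)/L=\inf_{s>0}g(s)/s\in[0,4\lambda]$ exists; dividing by $2$ gives \eqref{eq-cell-limit-2}. The main obstacle I anticipate is the admissibility check (H2) for the glued field: circulations along loops in $Q_L\setminus S$ must still live in $\tau\Z^2$, and this is where the buffer regions $Q_{L_i}\setminus Q_{L_i'}$ are essential, as they supply the room to deform an arbitrary loop to either a loop inside a single original cube or a trivial loop, where admissibility is already known.
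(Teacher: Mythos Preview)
Your overall strategy—subadditivity of $L\mapsto\psi(R^-,R^+,L)$ plus Fekete—is sound and parallels the paper's argument, which also stacks translates of a near-optimal competitor vertically. The paper, however, does not claim exact subadditivity; it proves the comparison $F_1(A_{M,L},Q_M)\le \lfloor M/L\rfloor\,F_1(A_L,Q_L)+CL$ with an error term $CL$ \emph{independent of $M$}, then divides by $M$ and sends $M\to\infty$ to obtain $\limsup_M\psi(M)/(2M)\le\psi(L)/(2L)+\eta$ directly, bypassing Fekete.

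Your claimed inequality $\F_1(A,S;Q_L)\le\F_1(A_1,S_1;Q_{L_1})+\F_1(A_2,S_2;Q_{L_2})$ has a gap in the core term. When $S_i\subset Q_{L_i}$ is translated to $S_i'\subset Q_{L_i}'\subset Q_L$, the set $B_\lambda(S_i')$ can protrude outside $Q_{L_i}'$—into the side-strips or into the other cube—and that extra area is counted in $|B_\lambda(S)\cap Q_L|$ but was \emph{not} counted in $|B_\lambda(S_i)\cap Q_{L_i}|$. Your assertion that ``the side-strips contribute nothing to either term of the energy'' is false for the core term: $S$ avoids the side-strips, but $B_\lambda(S)$ need not. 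Without control on $S_i$ near $\partial Q_{L_i}$ this excess may be of order $\lambda(L_1+L_2)$, and then $\psi$ fails to be subadditive and Fekete does not apply.

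The repair is not difficult but must be made explicit. Since each $A_i$ equals $I_{R^-,R^+}$ on the buffer $Q_{L_i}\setminus Q_{L_i'}$, you can first extend $A_i$ by $I_{R^-,R^+}$ to the full-width strip $(-L,L)\times(-L_i,L_i)$ (eliminating side-strips and hence side-leakage), and then replace the near-optimal $S_i$ by $\big(S_i\cap\overline{Q_{L_i'}}\big)\cup\big(\{x_1=0\}\cap Q_{L_i}\big)$, checking that admissibility and the energy bound survive (on the buffer $\dist(A_i,SO(2))=0$ away from $\{x_1=0\}$, so trimming $S_i$ there costs nothing). After this the only leakage is through the single horizontal gluing face along $\{x_1=0\}$, of size $O(\lambda^2)$, giving $\psi(L_1+L_2)\le\psi(L_1)+\psi(L_2)+C$ with $C$ independent of $L_1,L_2$; Fekete applied to $\psi+C$ then yields the limit. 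This is precisely the role played by the $+CL$ correction in the paper's version of the estimate.
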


\begin{proof}
For every $L>0$ and $\eta>0$, we consider a matrix field $A_L:\R^2\to \R^{2\times 2}$ such that $A_L=I_{R^-,R^+}$ in $\R^2\setminus Q_{L'}$, for some $L'<L$, and
\begin{equation}\label{eq-cell-L}
\psi(R^-,R^+,L)\geq F_1(A_L, Q_L)-2\eta\,L.
\end{equation}
Let $\hat A_L$ denote the field obtained by the periodic extension in the vertical direction
\begin{equation}\label{eq-AL-estesa}
\hat A_L(x_1,x_2+2kL)=A_L(x_1,x_2)\qquad \forall (x_1,x_2)\in \R \times[-L,L], \quad \forall\:\, k\in\Z.
\end{equation}
For every $M>2L$, we set $M'=\lfloor M/L\rfloor L-L$ and $A_{M,L}=\hat A_L\chi_{Q_{M'}}+ I_{R^-,R^+}(1-\chi_{Q_{M'}})$. Then,
\begin{equation}
\begin{split}\label{eq-extension-estimate}
F_1(A_{M,L}, {Q_M})&=
F_1(A_{M,L}, (-L,L)\times (-M,M))\\
&\leq \lfloor M/L\rfloor F_1(A_L, Q_L) +CL\\
&\leq \lfloor M/L\rfloor (\psi(R^-,R^+,L)+L\eta ) +CL,
\end{split}
\end{equation}
where the contribution $CL$ accounts for the measure of the tubular neighborhood of the support of $\Curl A_{M,L}$ in $Q_M\setminus Q_{M'}$.
Thus we deduce that
\begin{equation}\label{e.stimozzo}
\begin{split}
\frac{1}{2M}F_1(A_{M,L}, {Q_M})\leq  &	\frac{1}{2M}(\lfloor M/L\rfloor (\psi(R^-,R^+,L)+2L\eta ) +CL)
\end{split}
\end{equation}
and hence
\begin{align}\label{eq-cell-limsup}
\limsup_{M\to +\infty}\frac1{ 2M}\psi(R^-,R^+,M) &\leq \limsup_{M\to +\infty}
\frac{1}{2M}F_1(A_{M,L},  Q_M)\notag\\
&\leq\frac1{2L} \psi(R^-,R^+,L) +\eta.
\end{align}
Taking the liminf as $L\to +\infty$, we deduce that
\begin{equation}\label{eq-confronto}
\limsup_{M\to +\infty}\frac1{2M}\psi(R^-R^+,M)\leq \liminf_{L\to +\infty}\frac1{2L}\psi(R^-,R^+,L) +\eta,
\end{equation}
and by the arbitrariness of $\eta$ we conclude the existence of the limit $\psi_\infty$.
\end{proof}




Next we provide an upper bound for $\psi_\infty(R^-,R^+)$ which is optimal in the scaling.
To this aim we revisit the construction done by Lauteri--Luckhaus in \cite{LL2}, and we extend to our anisotropic model and to any pair $(R^-,R^+)$ not necessarily symmetric with respect to $e_2$.

\subsection{Construction with the optimal Read and Shockley scaling}

We present a construction for strain fields capable of describing the phase transition between different lattice configurations along an interface.  The construction is quite flexible, and depending on the choice of the parameters can be adapted to different pairs of configurations: rotated to rotated, incompatible lattices, rotated to incompatible.

In this section we denote $Q_{a,b}:=[-a,a]\times [-b,b]$ the rectangle with sides $2a, 2b>0$. We also denote as $R_\beta$ the rotation of angle $\beta\in \R$ given by
\begin{equation}
	R_\beta=\left(\begin{matrix}
		\cos\beta & -\sin\beta\\
		\sin \beta & \cos\beta
	\end{matrix}\right).
\end{equation}
Given the parameters $\eta, \mu\in \R$ we define the shear field
\begin{equation}
	D_{\eta,\mu}=\left(\begin{matrix}
		1 & \mu\\
		0 & \eta
	\end{matrix}\right).
\end{equation}

\subsubsection{Compression to compression}

Consider the rectangles $Q:=Q_{l,h}$, $Q^\prime:= Q_{r,\rho}$ with $r,\rho<\operatorname{min}\{h,l\}$, and let $\eta,\tilde \eta, \mu,\tilde\mu\in \R$ be given parameters. We divide $Q\smallsetminus Q^\prime$ in $8$ regions $\Delta_i$, with $i=1,\dots,8$, as in Figure \ref{figuraB}.

\begin{figure}[htb]
	\fontsize{6}{4}{
		\def\svgwidth{400pt}{\footnotesize
\begingroup%
  \makeatletter%
  \providecommand\color[2][]{%
    \errmessage{(Inkscape) Color is used for the text in Inkscape, but the package 'color.sty' is not loaded}%
    \renewcommand\color[2][]{}%
  }%
  \providecommand\transparent[1]{%
    \errmessage{(Inkscape) Transparency is used (non-zero) for the text in Inkscape, but the package 'transparent.sty' is not loaded}%
    \renewcommand\transparent[1]{}%
  }%
  \providecommand\rotatebox[2]{#2}%
  \newcommand*\fsize{\dimexpr\f@size pt\relax}%
  \newcommand*\lineheight[1]{\fontsize{\fsize}{#1\fsize}\selectfont}%
  \ifx\svgwidth\undefined%
    \setlength{\unitlength}{663.30706787bp}%
    \ifx\svgscale\undefined%
      \relax%
    \else%
      \setlength{\unitlength}{\unitlength * \real{\svgscale}}%
    \fi%
  \else%
    \setlength{\unitlength}{\svgwidth}%
  \fi%
  \global\let\svgwidth\undefined%
  \global\let\svgscale\undefined%
  \makeatother%
  \begin{picture}(1,0.35470085)%
    \lineheight{1}%
    \setlength\tabcolsep{0pt}%
    \put(0,0){\includegraphics[width=\unitlength,page=1]{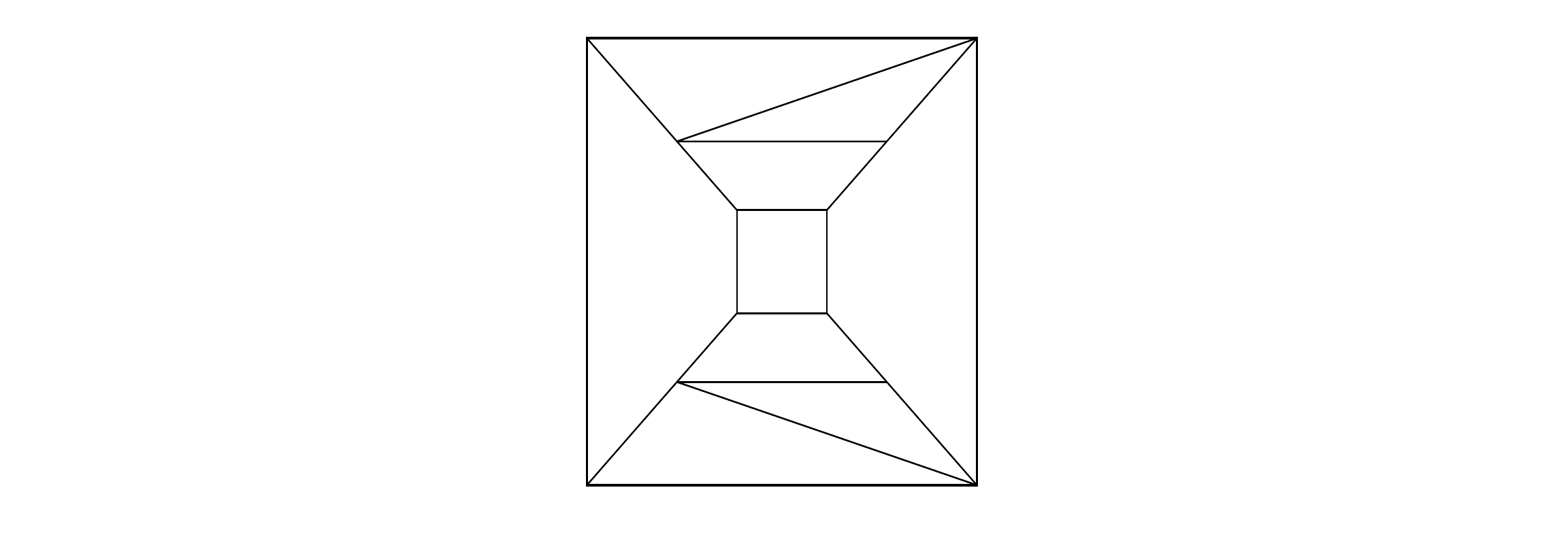}}%
    \put(0.43164383,0.28991378){\makebox(0,0)[lt]{\lineheight{0.40000001}\smash{\begin{tabular}[t]{l}$\Delta_1$\end{tabular}}}}%
    \put(0.54421136,0.17815609){\makebox(0,0)[lt]{\lineheight{0.40000001}\smash{\begin{tabular}[t]{l}$\Delta_8$\end{tabular}}}}%
    \put(0.5474507,0.08178532){\makebox(0,0)[lt]{\lineheight{0.40000001}\smash{\begin{tabular}[t]{l}$\Delta_5$\end{tabular}}}}%
    \put(0.48671286,0.12227725){\makebox(0,0)[lt]{\lineheight{0.40000001}\smash{\begin{tabular}[t]{l}$\Delta_4$\end{tabular}}}}%
    \put(0.44298159,0.06963775){\makebox(0,0)[lt]{\lineheight{0.40000001}\smash{\begin{tabular}[t]{l}$\Delta_6$\end{tabular}}}}%
    \put(0.52153588,0.27695637){\makebox(0,0)[lt]{\lineheight{0.40000001}\smash{\begin{tabular}[t]{l}$\Delta_2$\end{tabular}}}}%
    \put(0.4875227,0.23079559){\makebox(0,0)[lt]{\lineheight{0.40000001}\smash{\begin{tabular}[t]{l}$\Delta_3$\end{tabular}}}}%
    \put(0.40572901,0.18139544){\makebox(0,0)[lt]{\lineheight{0.40000001}\smash{\begin{tabular}[t]{l}$\Delta_7$\end{tabular}}}}%
  \end{picture}%
\endgroup%
}
		\caption[]{}
		\label{figuraB}
	}
\end{figure}
The aim is to define a field $D \in L^1_{loc}(\R^2;\R^{2\times 2})$ connecting the two shear fields  $D_{\eta,\mu}$ and $D_{\tilde\eta,\tilde\mu}$ via a construction mimicking the action of vertical dislocations and resolving the incompatibility with an optimal scaling. With that aim in mind we start defining $D$ in a single rectangle $Q$ and then proceed to extend it by periodicity to the whole $\R^2$. We point out that imposing the right boundary condition on $\partial Q$ is an important aspect, since it allows to glue the local construction without creating any new curl. A fundamental aspect of the construction is the fact that we are able to determine arbitrarily $\operatorname{curl} D$ as a function of the parameters $\eta,\tilde \eta, \mu,\tilde \mu, h$.

We want to define a field $\hat D$ as the gradient of an opportune deformation
\[
v:Q\setminus (Q'\cup\{x_2=0\})\to \R^2,
\]
that we now describe. First we define for any $\eta,\mu\in \R$ the piecewise affine function
\begin{equation}
	v_{\eta,\mu}(x_1,x_2)=
\begin{cases}
	\displaystyle{
D_{\eta,\mu}\left [x_1-\frac{\mu h}{\eta},x_2+\frac{h}{\eta}-h\right]}\qquad & \hbox{if } \{x_2>0\}\\
	\displaystyle{D_{\eta,\mu}\left[x_1+\frac{\mu h}{\eta},x_2+h-\frac{h}{\eta}\right] }\qquad & \hbox{if } \{x_2<0\}.
\end{cases}
\end{equation}
where, if $A\in \R^{2\times 2}$ and $x=(x_1,x_2)$, then we wrote the multiplication $Ax$ as $A[x_1,x_2]$.
Note that the function $v_{\eta,\mu}$ in the two half planes is the composition of the linear map associated to $D_{\eta,\mu}$ and a translation, and the latter is chosen so that  the points $(\pm \ell,\pm h)$ remain fixed. Moreover $v_{\eta,\mu}$ has constant jump along the line $\{x_2=0\}$ given by $2h(\mu,\eta-1)$.

We then define $v$:
\begin{itemize}
	\item[i)] In $\Delta_7$ we define $v(x_1,x_2)=v_{\eta,\mu}(x_1,x_2)$ and in $\Delta_8$ we define $v(x_1,x_2)=v_{\tilde\eta,\tilde\mu}(x_1,x_2)$;
	\item[ii)] In $\Delta_1, \Delta_2, \Delta_5$ and $\Delta_6$ we define $v$ as the linear interpolation on the vertices of the $\Delta_i$'s.
	\item[iii)] In $\Delta_3$ and $\Delta_4$ we interpolate along the horizontal direction obtaining:
	\begin{align*}
		v(x_1,x_2)=&\Big(x_1+\frac{hx_1}{2l}(\tilde\mu-\mu)-\frac{h^2x_1}{2lx_2}(\tilde\mu-\mu)+\frac{(\tilde\mu+\mu)}{2}(x_2-h)\Big)e_1+\\
		&\Big(\frac{hx_1}{2l}(\tilde\eta-\eta)-\frac{h^2x_1}{2lx_2}(\tilde\eta-\eta)+\frac{(\tilde\eta+\eta)}{2}(x_2-h)+h\Big)e_2,  \qquad x\in \Delta_3,
	\end{align*}
	\begin{align*}
		v(x_1,x_2)=&\Big(x_1-\frac{hx_1}{2l}(\tilde\mu-\mu)-\frac{h^2x_1}{2lx_2}(\tilde\mu-\mu)+\frac{(\tilde\mu+\mu)}{2}(x_2+h)\Big)e_1+\\
		&\Big(\frac{hx_1}{2l}(\eta-\tilde\eta)+\frac{h^2x_1}{2lx_2}(\eta-\tilde\eta)+(x_2+h)\frac{(\eta+\tilde\eta)}{2}-h\Big)e_2, \qquad x\in \Delta_4.
	\end{align*}
\end{itemize}
The field $\hat D$ is then defined as $\nabla v$, precisley:
\begin{equation}\label{def-B-anisotrop-constr}
	\begin{split}
		& \hat D(x)= \hat D_i(x):=	D_{\eta,\mu},\quad  x\in \Delta_i, \quad i\in \{1,6,7\}, \\
		&\hat D(x)= \hat D_8(x):=	D_{\tilde\eta,\tilde\mu},\quad  x\in \Delta_8,\\
		&  \hat D(x)=\hat D_2:=
		\left(\begin{matrix}
			1 +\frac{h}{2l}(\mu-\tilde\mu)& \frac{1}{2}(3\tilde\mu-\mu)\\
			\frac{h}{2l}(\eta-\tilde\eta) & \frac{1}{2}(3\tilde\eta-\eta)
		\end{matrix}\right),\quad  x\in\Delta_2,\\
		& \hat D(x)=\hat D_3:=
		\left(\begin{matrix}
			1+\frac{h}{2l}(\tilde\mu-\mu)-\frac{h^2}{2lx_2}(\tilde\mu-\mu) & \frac{1}{2}(\mu+\tilde\mu) +\frac{h^2x_1}{2lx_2^2}(\tilde\mu-\mu)\\
			\frac{h}{2l}(\tilde\eta-\eta)-
			\frac{h^2}{2lx_2}(\tilde\eta-\eta)&  \frac{1}{2}(\eta+\tilde\eta)+\frac{h^2x_1}{2lx_2^2}(\tilde\eta-\eta)
		\end{matrix}\right),\quad  x\in\Delta_3,\\
		& \hat D(x)=\hat D_4:=
		\left(\begin{matrix}
			1-\frac{h}{2l}(\tilde\mu-\mu)-\frac{h^2}{2lx_2}(\tilde\mu-\mu) & \frac{1}{2}(\mu+\tilde\mu) +\frac{h^2x_1}{2lx_2^2}(\tilde\mu-\mu)\\
			-\frac{h}{2l}(\tilde\eta-\eta)-
			\frac{h^2}{2lx_2}(\tilde\eta-\eta)&  \frac{1}{2}(\eta+\tilde\eta)+\frac{h^2x_1}{2lx_2^2}(\tilde\eta-\eta)
		\end{matrix}\right),\quad  x\in\Delta_4,\\
		& \hat D(x)=\hat D_5:= \left(\begin{matrix}
			1 +\frac{h}{2l}(\tilde\mu-\mu)& \frac{1}{2}(3\tilde\mu-\mu)\\
			\frac{h}{2l}(\tilde\eta-\eta) & \frac{1}{2}(3\tilde\eta-\eta)
		\end{matrix}\right),\quad  x\in\Delta_5,\\
		& \hat D(x)= I,\quad x\in Q'.\\
	\end{split}
\end{equation}
We observe that the following boundary conditions are satisfied
\begin{equation}\label{boundary-cond-D}
    \hat D_7=D_{\eta,\mu}, \qquad \hat D_1e_1=\hat D_6e_1=e_1, \qquad \hat D_8=D_{\tilde\eta,\tilde\mu}.
\end{equation}
Being the gradient of a function with constant jump on the segments $(-l,-r)\times \{0\} $ and $(r,l)\times \{0\}$, the support of $\operatorname{Curl}\hat D$ is contained in $Q^\prime$, and $\hat D$ has constant but non trivial circulation around $Q^\prime$ given by
\begin{equation}\label{curl-B}
	\int_{\partial Q'} \hat D=\int_{\partial Q} \hat D=2h(\mu-\tilde\mu,\eta-\tilde\eta).
\end{equation}
We now extend $\hat D$ to the infinite vertical stripe $(-l,l)\times \R$ by repeating its construction periodically in the vertical direction:
	\[
	\hat D(x_1, x_2+2kh) = \hat D(x_1,x_2), \qquad  (x_1,x_2)\in Q, \quad \forall\; k\in \mathbb{Z},
	\]
	this field still having compatible circulation since  $\hat D_1e_1=\hat D_6e_1$, hence no curl was created in gluing the construction vertically.
	
In order to extend the construction to the whole of $\R^2$ we can now simply set
 \begin{equation}
     D(x_1,x_2)=\begin{cases}
         D_{\eta,\mu}\,x, \qquad x_1< -l,\\
         \hat D(x_1,x_2), \qquad |x_1|\leq l,\\
          D_{\tilde\eta,\tilde\mu}\,x, \qquad x_1>l.
     \end{cases}
 \end{equation}
We observe that
\begin{equation}\label{supp-curl-D}
\operatorname{supp}\Curl (D) \subseteq \bigcup_{k\in \Z}[-r,r]\times[-\rho+2kh,\rho+2kh],
\end{equation}
since again, thanks to the boundary condition $D_7=D_{\eta,\mu}$ and $D_8=D_{\tilde\eta,\tilde\mu}$, no curl was created in extending $\hat D$ outside $(-l,l)\times \R$. Furthermore, since the circulation of $D$ around each rectangle $[-r,r]\times[-\rho+2kh,\rho+2kh]$ is given by \eqref{curl-B}, from \eqref{supp-curl-D} we infer that
\begin{equation}\label{circul-D}
    \int_{\gamma} D \in h(\mu-\tilde\mu,\eta-\tilde\eta) \Z,
\end{equation}
for every $1$-rectifiable closed loop $\gamma \subset \R^2\smallsetminus \operatorname{supp}\operatorname{curl}D$.

We now estimate the energy of $D$ in $Q$.
We start with the elastic energy in $\Delta_3$ and $\Delta_4$: for $i=3,4$ it holds that
\begin{equation}
		\begin{split}
			\dist^2(\hat  D_i,SO(2))\leq |\hat D_i-I|^2 &\leq C\frac{h^4}{l^2}[(\tilde\mu-\mu)^2+(\tilde\eta-\eta)^2](\frac{1}{x_2^2}+\frac{x_1^2}{x_2^4})\\
			&+C\frac{h^2}{l^2}[(\tilde\mu-\mu)^2+(\tilde\eta-\eta)^2]+(\eta+\tilde\eta-2),
		\end{split}
	\end{equation}
 hence
 \begin{equation}\label{bound-energia-D3}
		\begin{split}
	&\int_{\Delta_i}\dist^2(\hat D_i(x),SO(2))\leq \frac{h^4}{l^2}[(\tilde\mu-\mu)^2+(\tilde\eta-\eta)^2]\int_{\rho}^{h/2}dx_2\int_{-\frac{l}{h}x_2}^{\frac{l}{h}x_2}\Big(\frac{1}{x_2^2}+\frac{x_1^2}{x_2^4}\Big)dx_1\\
			&+C\frac{h^3}{l}[(\tilde\mu-\mu)^2+(\tilde\eta-\eta)^2]+lh(\eta+\tilde\eta-2)\\
   &\leq C\Big(\frac{h^3}{l}+hl\Big)[(\tilde\mu-\mu)^2+(\tilde\eta-\eta)^2]\Big|\log\Big(\frac{h}{2\rho}\Big)\Big|\\
   &+C\frac{h^3}{l}[(\tilde\mu-\mu)^2
   +(\tilde\eta-\eta)^2]+hl(\eta+\tilde\eta-2).
		\end{split}
	\end{equation}
For $\Delta_i$ with $i=2,5$ we have instead
\begin{equation}
     \dist^2(\hat D_i,SO(2))\leq |\hat D_i-I|^2=\frac{h^2}{4l^2}|\mu-\tilde\mu|^2+\frac{1}{4}|\mu-3\tilde\mu|^2+\frac{h^2}{4l^2}|\eta-\tilde\eta|^2+|1-\frac{1}{2}(3\tilde\eta-\eta)|^2,
\end{equation}
hence
\begin{equation}\label{bound-energia-D1}
    \int_{\Delta_i} \dist^2(\hat D_i,SO(2)) \leq  \frac{h^3}{4l}|\mu-\tilde\mu|^2+\frac{hl}{4}|\mu-3\tilde\mu|^2+\frac{h^3}{4l}|\eta-\tilde\eta|^2+hl|1-\frac{1}{2}(3\tilde\eta-\eta)|^2.
\end{equation}
Finally for $i=1,6,7$ we have
\begin{equation}\label{bound-energia-shear}
\int_{\Delta_i}\dist^2(D_{\eta,\mu},SO(2))\leq \int_{\Delta_i}|I-D_{\eta,\mu}|^2= hl\Big((1-\eta)^2+\mu^2)\Big),
\end{equation}
and a similar estimate also holds for $\Delta_8$.\\
To estimate the core energy of $\hat D$ we recall that support of $\operatorname{Curl}\hat D$ is contained in $Q^\prime$. Hence setting $S=Q^\prime$ we have
\begin{equation}\label{bound-core-D}
    |B_{\lambda\eps }(S)|=|B_{\lambda\eps }(S)|\leq C(r+\lambda\eps)(\rho+\lambda\eps).
\end{equation}

\subsubsection{Rotation to compression}\label{rotation to compression}

Consider as above the rectangles $Q:=Q_{l,h}$, $Q^\prime:= Q_{r,\rho}$ with $r,\rho<\operatorname{min}\{h,l\}$, and let $\eta, \mu,\beta\in \R$ be given parameters. We now divide $Q\smallsetminus Q^\prime$ in $4$ regions $\Delta'_i$, with $i=1,\dots,4$, as in Figure \ref{figuraA}.

\begin{figure}[htb]
	\fontsize{6}{4}{
		\def\svgwidth{400pt}{\footnotesize
\begingroup%
  \makeatletter%
  \providecommand\color[2][]{%
    \errmessage{(Inkscape) Color is used for the text in Inkscape, but the package 'color.sty' is not loaded}%
    \renewcommand\color[2][]{}%
  }%
  \providecommand\transparent[1]{%
    \errmessage{(Inkscape) Transparency is used (non-zero) for the text in Inkscape, but the package 'transparent.sty' is not loaded}%
    \renewcommand\transparent[1]{}%
  }%
  \providecommand\rotatebox[2]{#2}%
  \newcommand*\fsize{\dimexpr\f@size pt\relax}%
  \newcommand*\lineheight[1]{\fontsize{\fsize}{#1\fsize}\selectfont}%
  \ifx\svgwidth\undefined%
    \setlength{\unitlength}{663.30706787bp}%
    \ifx\svgscale\undefined%
      \relax%
    \else%
      \setlength{\unitlength}{\unitlength * \real{\svgscale}}%
    \fi%
  \else%
    \setlength{\unitlength}{\svgwidth}%
  \fi%
  \global\let\svgwidth\undefined%
  \global\let\svgscale\undefined%
  \makeatother%
  \begin{picture}(1,0.35470085)%
    \lineheight{1}%
    \setlength\tabcolsep{0pt}%
    \put(0,0){\includegraphics[width=\unitlength,page=1]{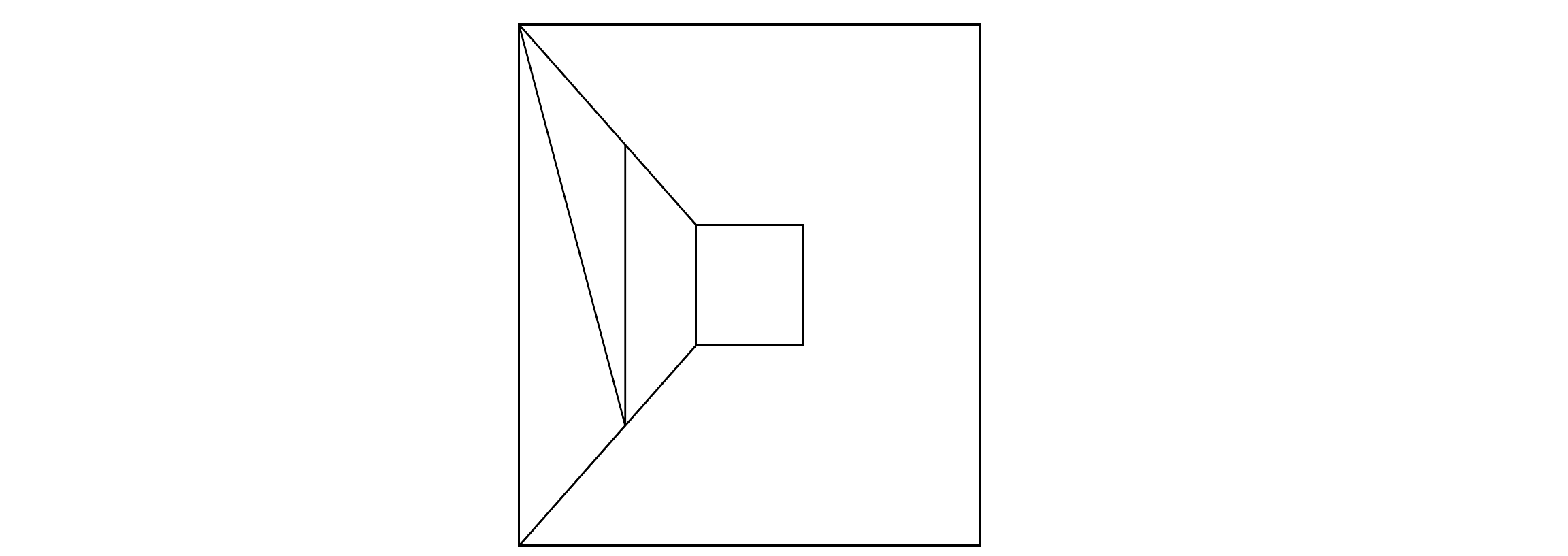}}%
    \put(0.34983765,0.12580474){\makebox(0,0)[lt]{\lineheight{0.40000001}\smash{\begin{tabular}[t]{l}$\Delta^\prime_1$\end{tabular}}}}%
    \put(0.35714285,0.25161644){\makebox(0,0)[lt]{\lineheight{0.40000001}\smash{\begin{tabular}[t]{l}$\Delta^\prime_2$\end{tabular}}}}%
    \put(0.40422081,0.17694111){\makebox(0,0)[lt]{\lineheight{0.40000001}\smash{\begin{tabular}[t]{l}$\Delta^\prime_3$\end{tabular}}}}%
    \put(0.53814936,0.1696359){\makebox(0,0)[lt]{\lineheight{0.40000001}\smash{\begin{tabular}[t]{l}$\Delta^\prime_4$\end{tabular}}}}%
  \end{picture}%
\endgroup%
}
		\caption[]{}
		\label{figuraA}
	}
\end{figure}
In the same spirit of the \textit{compression to compression} construction, the aim is now to define a matrix field $B \in L^1_{loc}(\R;\R^{2\times 2})$ connecting a rotation $R_\beta$ and a shear field $D_{\eta,\mu}$ via a construction mimicking horizontal dislocations. A fundamental aspect of the construction is the fact that we obtain $\operatorname{curl} B$ as a function of the parameters $\eta, \mu,\beta$ and $ h$.

We start defining $\hat B$ in a single $Q$. Consider the function
\[
v:Q\setminus (Q'\cup\{x_1=0,x_2>0\})\to \R^2,
\]
defined as follows:
\begin{itemize}
	\item[i)] In the region $\Delta_4$  we have
\begin{equation*}
	v(x_1,x_2)=	\begin{cases}
		D_{\eta,\mu}\,x\qquad & \hbox{if } x\in \Delta'_4\setminus \{x_1<0,x_2>0\}\\
		D_{\eta,\mu}[x_1+\xi_1,x_2+\xi_2]\qquad & \hbox{if } x\in \Delta'_4\cap \{x_1<0,x_2>0\}
	\end{cases}
\end{equation*}
where the vector $\xi=(\xi_1,\xi_2):=(-2h\frac{\mu}{\eta}\cos\beta-2h\sin\beta,\frac{2h}{\eta}\cos\beta-2h)$ is obtained by solving $D_{\eta,\mu}[-h+\xi_1,h+\xi_2]-D_{\eta,\mu}[-h,-h]=R_{\beta}2he_2$;
	\item[ii)] In $\Delta'_1$, $\Delta'_2$, and the function $v$ is the linear interpolation over the vertices of the $\Delta'_i$'s;
	\item[iii)] In $\Delta'_3$ we interpolate linearly along the vertical direction
	\begin{align*}
		v(x_1,x_2)=&\Big((\mu+\frac{l}{x_1}(\mu+\sin\beta))x_2+x_1-h(\mu+\sin \beta)\Big)e_1+\\
  &\Big((\eta +\frac{l}{x_1}(\eta-\cos\beta))x_2+h(\cos\beta-\eta)\Big)e_2,\qquad  x\in\Delta'_3.
	\end{align*}
\end{itemize}

We define the field $B=\nabla v$, which is then given by
\begin{equation}
	\begin{split}
		&\hat  B(x)=I,\quad x\in Q',\\
		&\hat B(x)=\hat B_1:=
		\left(\begin{matrix}
			1+\frac{h}{l}\mu+\frac{h}{l}\sin \beta & -\sin \beta\\
			\frac{h}{l}\eta -\frac{h}{l}\cos \beta & \cos\beta
		\end{matrix}\right),\quad  x\in \Delta'_1,\\
		&\hat  B(x)=\hat  B_2:=
		\left(\begin{matrix}
			-2\frac{h}{l}\sin\beta-2\frac{h}{l}\mu+1 & -2\sin\beta-\mu\\
			2\frac{h}{l}\cos\beta-2\frac{h}{l}\eta & 2\cos\beta-\eta
		\end{matrix}\right),\quad x\in \Delta'_2,\\
		&\hat  B(x)=\hat B_3:=
		\left(\begin{matrix}
			1-\frac{lx_2}{x_1^2}(\mu+\sin \beta) & \mu+\frac{l}{x_1}(\mu+\sin \beta)\\
			-\frac{lx_2}{x_1^2}(\eta-\cos\beta)  & \eta + \frac{l}{x_1}(\eta-\cos\beta)
		\end{matrix}\right),\quad  x\in\Delta'_3,\\
		&\hat  B(x)=\hat  B_4(x):=	D_{\eta,\mu},\quad   x\in \Delta'_4.\\
	\end{split}
\end{equation}
We observe that the following  boundary conditions are satisfied:
\begin{equation}\label{boundary-cond-B}
   \hat  B_1e_2=R_\beta e_2, \qquad \hat B_4=D_{\eta,\mu}.
\end{equation}
As before we compute the circulation of $\hat B$ around $Q^\prime$ following $\partial Q$ as a path.  With a rapid calculation we find that the clockwise circulation of $\hat B$ is given by
\begin{equation}\label{ident-circul-B}
	 \int_{\partial Q'} \hat B=\int_{\partial Q} \hat B=2h(-\mu-\sin\beta,\cos\beta-\eta).
\end{equation}

We now extend $\hat B$ to the infinite vertical stripe $(-l,l)\times \R$ by repeating its construction periodically in the vertical direction:
	\[
	\hat B(x_1, x_2+2kh) = \hat B(x_1,x_2), \qquad  (x_1,x_2)\in Q, \quad \forall\; k\in \mathbb{Z},
	\]
	this field still having compatible circulation because of the boundary conditions \eqref{boundary-cond-B}, hence no curl was created in gluing the construction vertically.\\
 To extend the construction to the whole of $\R^2$ we can now simply set
 \begin{equation}
     B(x_1,x_2)=\begin{cases}
         R_\beta \,x, \qquad x_1< -l,\\
         \hat B(x_1,x_2), \qquad |x_1|\leq l,\\
          D_{\eta,\mu}\,x, \qquad x_1>l.
     \end{cases}
 \end{equation}
We observe that
\begin{equation}\label{supp-curl-B}
\operatorname{supp}\Curl (B) \subseteq \bigcup_{k\in \Z}[-r,r]\times[-\rho+2kh,\rho+2kh],
\end{equation}
since again, thanks to the boundary condition $B_1e_2=R_\beta e_2$ and $B_4=D_{\eta,\mu}$, no curl was created in extending $\hat B$ outside $(-l,l)\times \R$.
Furthermore, since the circulation of $B$ around each rectangle $[-r,r]\times[-\rho+2kh,\rho+2kh]$ is given by \eqref{ident-circul-B}, from \eqref{supp-curl-B} we infer that
\begin{equation}\label{circul-B}
    \int_{\gamma} B \in  h(-\mu-\sin\beta,\cos\beta-\eta)\Z,
\end{equation}
for every $1$-rectifiable closed loop $\gamma \subset \R^2\smallsetminus \operatorname{supp}\operatorname{curl}B$.

We now estimate the energy of $B$ in $Q$.
Similarly to the \textit{compression to compression} construction, via a direct computation, we use the fact that $\dist^2(\hat B_i,SO(2))\leq |\hat B_i-Id|^2$ for $i=1,2,3$ to find the following estimates
\begin{equation}\label{bound-energia-B}
		\begin{split}
			&\int_{\Delta_1^\prime}\dist^2(\hat B_1,SO(2))\leq  \frac{h^3}{l}|\mu+\sin\beta|^2+hl|\sin \beta|^2+\frac{h^3}{l}|\eta
			-\cos\beta|^2+hl|1-\cos\beta|^2,\\
			& \int_{\Delta_2^\prime}\dist^2(\hat B_2,SO(2))\leq 4\frac{h^3}{l}|\mu+\sin\beta|^2+hl|\mu+2\sin\beta|^2+4\frac{h^3}{l}|\cos\beta-\eta|^2+hl|2\cos\beta-\eta-1|^2,\\
         &\int_{\Delta_3^\prime}
         \dist^2(\hat B_3,SO(2))\leq C\Big(\frac{h^3}{l}+hl\Big)[(\mu+\sin \beta)^2+(\eta-\cos\beta)^2]\Big|\log\Big(\frac{2r}{l}\Big)\Big|+2hl(\mu^2+(1-\eta)^2).
     \end{split}
 \end{equation}
 The elastic energy in $\Delta^\prime_4$ is in turn given in \eqref{bound-energia-shear}.\\
 To estimate the core energy of $\hat B$ we recall that the support of $\operatorname{Curl}(\hat B)$ is contained in $Q^\prime$. Hence setting $S=Q^\prime$ we have
\begin{equation}\label{bound-core-B}
    |B_{\lambda\eps }(S)|\leq C(r+\lambda\eps)(\rho+\lambda\eps).
\end{equation}

\subsubsection{Rotation to rotation}

We now combine the constructions above in order to obtain the interface between two different rotations $R^-$ and $R^+$.

\begin{proposition}\label{prop-upper-bound}
	For every $R^-,R^+\in SO(2)$ and for every $\eps>0$  there exist $l=l(R^-,R^+),$ $h=h(R^-,R^+)>0$ and a field $A_\eps\in L^1_{\textup{loc}}(\R^2;\R^{2\times2})$
	such that
	\begin{gather*}
		A_\eps(x)=I_{R^-,R^+}(x)\qquad \forall\;x\in \R^2\setminus \left({(-l/2,l/2)\times \R}\right)
	\end{gather*}
	and
	\begin{equation}\label{eq-upper-construction-L}
		F_\eps(A_\eps,Q_L) \leq C L|R^--R^+| \big(\big|\log\big|R^--R^+\big|\big|+1\big)\qquad \forall\; L>h,
	\end{equation}
	where $C>0$ is a dimensional constant. In particular, 
	\begin{equation}\label{upperbound}
		\psi_\infty(R^-,R^+)\leq C|R^--R^+|
		\big(\big|\log\big|R^--R^+\big|\big|+1\big)\qquad \forall\;R^-,R^+\in SO(2).
	\end{equation}
\end{proposition}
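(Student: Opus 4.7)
Set $\alpha := |R^+-R^-|$. The case $\alpha$ bounded below is handled by a trivial construction with $O(\eps)$-spaced dislocations, which gives $F_\eps(A_\eps,Q_L)\leq CL$, absorbing the $|\log\alpha|+1$ factor. So I focus on small $\alpha$ and plan to build the transition inside a vertical strip of half-width $l/2$ with $l$ of order $\eps/\alpha$, by stacking three sub-constructions side by side. On the leftmost sub-strip I apply the \emph{rotation-to-compression} block of Subsection 5.2.2, taking $R^-$ on the outer edge to an intermediate shear $D_-=D_{\eta_-,\mu_-}$ on the inner edge. In the middle sub-strip I use the \emph{compression-to-compression} block of Subsection 5.2.1 between $D_-$ and $D_+=D_{\eta_+,\mu_+}$. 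On the rightmost sub-strip I apply a mirrored rotation-to-compression block from $D_+$ to $R^+$. The vertical periodicity $2h$ with $h$ of order $\eps/\alpha$ is common to all three blocks, and outside the strip I extend by $R^\pm$. The lateral boundary conditions \eqref{boundary-cond-D} and \eqref{boundary-cond-B} guarantee that the three blocks glue without creating extra curl at the interfaces between sub-strips, and the periodic extension is compatible thanks to the same identities.

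\textbf{Admissibility and energy per cell.} The intermediate shears and the dislocation spacing $h$ must be tuned so that each of the three per-period Burgers vectors in \eqref{curl-B}, \eqref{ident-circul-B} (and its mirrored analogue) lies in $\tau\eps\Z^2$, their sum matching the ``total'' circulation $2h(R^+-R^-)e_2$ imposed by the outer data, while $|D_\pm-R^\pm|=O(\alpha)$ so that the local elastic strain stays $O(\alpha)$ uniformly. This triple can be selected by a Diophantine/pigeonhole argument within the admissible window $h\in[c\,\tau\eps/\alpha, C\,\tau\eps/\alpha]$ and is the main technical nuisance. Once this is done, the support of $\Curl A_\eps$ lies inside the union of the core boxes of size $r\times\rho$ (with $r,\rho$ of order $\eps$), giving (H1)-(H2). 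Plugging these parameters into the energy estimates \eqref{bound-energia-D3}, \eqref{bound-energia-D1}, \eqref{bound-energia-shear}, \eqref{bound-core-D} of the middle block and \eqref{bound-energia-B}, \eqref{bound-core-B} of the lateral blocks, the contribution of one period cell $Q_{l,h}$ is
\begin{equation*}
\F_\eps(A_\eps;Q_{l,h})\;\leq\;C\Bigl(\tfrac{h^3}{l}+hl\Bigr)\alpha^2\bigl(|\log(h/\rho)|+1\bigr)+C(r+\lambda\eps)(\rho+\lambda\eps)\;\leq\;C\eps^2\bigl(|\log\alpha|+1\bigr),
\end{equation*}
with the dominant term coming from the logarithmic integral in the horizontal interpolation regions.

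\textbf{Global estimate and deduction of \eqref{upperbound}.} The rectangle $Q_L$ contains of the order of $L/h=L\alpha/\eps$ period cells, hence $\F_\eps(A_\eps,Q_L)\leq CL\eps\alpha(|\log\alpha|+1)$; dividing by $\eps$ yields \eqref{eq-upper-construction-L}. For \eqref{upperbound} I specialise to $\eps=1$ and truncate the constructed field: set $L':=L-C$ and modify $A_1$ in the thin horizontal slab $\{L'<|x_2|<L\}\cap\{|x_1|<l/2\}$ by linearly interpolating the potential of $A_1$ with that of $I_{R^-,R^+}$, and by adjusting a finite number of dislocations to restore (H2); this modification is local in $x_2$ and costs $O(l)=O(1/\alpha)$ in energy, independently of $L$. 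The truncated field is admissible for the cell problem \eqref{eq-cell-problem-eps}, giving $\psi(R^-,R^+,L)\leq F_1(A_1,Q_L)+O(1)\leq CL\alpha(|\log\alpha|+1)+O(1)$; dividing by $2L$ and sending $L\to\infty$ produces \eqref{upperbound}. The crux of the argument, as indicated, is the simultaneous arithmetic choice of $h$ and $(\eta_\pm,\mu_\pm)$ so that all three Burgers vectors land exactly on $\tau\eps\Z^2$ without spoiling the $O(\alpha)$ closeness of $D_\pm$ to $R^\pm$; the rest is bookkeeping on the local estimates of Subsections 5.2.1 and 5.2.2.
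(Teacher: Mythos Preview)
Your three-block architecture (rotation$\to$compression, compression$\to$compression, mirrored compression$\to$rotation) is exactly the one the paper uses, and your energy bookkeeping is correct. The substantive divergence is in how the quantization (H2) is achieved, and this is precisely the point you flag as ``the main technical nuisance'' and then leave to a Diophantine/pigeonhole argument you do not carry out.

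The paper does not use any pigeonhole argument; it makes explicit algebraic choices. First, it writes $R^\pm=R_{\theta\pm\alpha}$ and works with the field $A_\theta$ that connects $R_{-\alpha}$ to $R_{+\alpha}$, recovering $A=R_\theta A_\theta$ at the end. Second, it allows \emph{different} vertical periods $h_B=h_C=\eps\tau/(\cos\theta\sin\alpha)$ and $h_D=\eps\tau/(\sin\theta\sin\alpha)$ for the lateral and central blocks; since each block equals the constant shear $D_{\eta,\mu}$ (resp.\ $D_{\tilde\eta,\tilde\mu}$) on its full lateral boundary, incommensurate periods glue without creating curl. With the explicit parameters in \eqref{prop-upper-param}, the circulations \eqref{curl-B}, \eqref{circul-B} evaluate \emph{exactly} to $\eps\tau R_{-\theta}e_2$ and $\eps\tau R_{-\theta}e_1$, so that after left-multiplication by $R_\theta$ every Burgers vector lands in $\eps\tau\Z^2$ on the nose. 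This is the key idea you are missing.

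Your assumption of a \emph{common} period $h$ for all three blocks is unnecessary and is what forces you into a simultaneous Diophantine problem (five real parameters against effectively four or more integer constraints) that you do not resolve. Your truncation step for \eqref{upperbound} is also more elaborate than needed: the paper simply cuts at a height that is a multiple of the period and includes the short horizontal segments in $S$, paying $O(h)$ once; no potential interpolation or dislocation adjustment is required.
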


\begin{proof}
	Since $F_\eps(I_{R^-,R^+},Q_L)= 2L\lambda$, we need to show \eqref{eq-upper-construction-L} only in the case $|R^--R^+|$ is sufficiently small.
	 Without loss of generality we can assume that $R^-=R_{\theta-\alpha}$ and $R^+=R_{\theta+\alpha}$ for some $\theta \in (0,\pi/2)$ (we are considering the square lattice that has a $\pi/2$ invariance) and small $\alpha\in \mathbb{R}$, so that $|R^--R^+|=2\sqrt{2}\sin\alpha$ and $\sin\alpha<1/8$.
	Set
	\begin{equation}\label{prop-upper-param}
		\begin{split}
			& \eta=\cos\alpha+\cos\theta\sin\theta\sin\alpha,\quad \tilde\eta=\cos\alpha -\cos\theta\sin\theta\sin\alpha,\quad
			\mu=\sin^2\theta\sin \alpha\quad
			\tilde\mu=-\mu,\\
			&h_D=\frac{\eps\tau}{\sin\theta\sin\alpha},\quad
			l_D=\frac{\eps\tau}{\sin\alpha},\quad
			\rho_D=\frac{\eps\tau}{\sin\theta},\quad
			r_D=\eps\tau,\\
			&h_B=h_C=\frac{\eps\tau}{\cos\theta\sin\alpha},\quad
			l_B=\frac{\eps\tau}{\sin\alpha},\quad r_B=r_C=\frac{\eps\tau}{\cos\theta},\quad\rho_B=\rho_C=\eps\tau.
		\end{split}
	\end{equation}
	We apply the \textit{compression to compression} construction with the above choice of the parameters $\eta, \tilde\eta, \mu,\tilde \mu, h_D,l_D, r_D,\rho_D$ to  obtain a matrix field $D_\eps \in L^1_{loc}(\R^2;\R^{2\times2})$ that we will call simply $D$ dropping the dependence in $\eps$ for ease of notation. We have
 \begin{equation}
     S_D:=\operatorname{supp}\operatorname{curl}D\subseteq \bigcup_{k\in \Z}[-r_D,r_D]\times[-\rho_D+2kh_D,\rho_D+2kh_D]
 \end{equation}
 and in view of \eqref{circul-D}   it holds \begin{equation}\label{ident-cur-ruotato-D}
	    \int_\gamma D\in\Z\eps\tau R_{-\theta}e_2,
	\end{equation}
for all closed loop $\gamma\subset \R^2\smallsetminus S_D$.\\
	Consider now the \textit{rotated to compression} construction with parameters $\beta=-\alpha$ and $h_B,l_B,$ $r_B,\rho_B ,\eta, \mu$ as in \eqref{prop-upper-param} to obtain a field $B\in L^1_{loc}(\R^2;\R^{2\times2})$.   We have
 \begin{equation}
S_B:=\operatorname{supp}\operatorname{curl}B\subseteq \bigcup_{k\in \Z}[-r_B,r_B]\times[-\rho_B+2kh_B,\rho_B+2kh_B]
 \end{equation}
 and in view of \eqref{circul-B} we have
	\begin{equation}\label{ident-curl-ruotato-B}
	    \int_\gamma B \in \eps\tau \Z R_{-\theta}e_1,
	\end{equation}
 for all closed loop $\gamma\subset \R^2\smallsetminus S_B$.\\
	The last element we need is a rotated version of the \textit{rotation to compression} construction. Precisely, consider the construction of Section \ref{rotation to compression} with   parameters $\beta=\alpha$ and $ h_C,l_C,r_C,\rho_C,\tilde\eta, \tilde\mu$ as in \eqref{prop-upper-param} to obtain a field $\tilde C\in L^1_{loc}(\R^2;\R^{2\times2})$.
	Now we set
	\begin{gather*}
		C(x_1,x_2)=\tilde C(-x_1,-x_2)  \qquad x\in
		\R^2,
	\end{gather*}
	see Figure \eqref{fig-dislocation-blocks}. Clearly, in view of \eqref{circul-B}, $C$ is such that its circulation satisfies
 \begin{equation}\label{ident-curl-ruotato-C}
	    \int_\gamma C\in \eps\tau \Z R_{-\theta}e_1,
	\end{equation}
 for all closed loop $\gamma\subset \R^2\smallsetminus S_C$ where $S_C=S_B$.\\
	\begin{figure}[t]
		\fontsize{8}{4}{
			\def\svgwidth{370pt}
			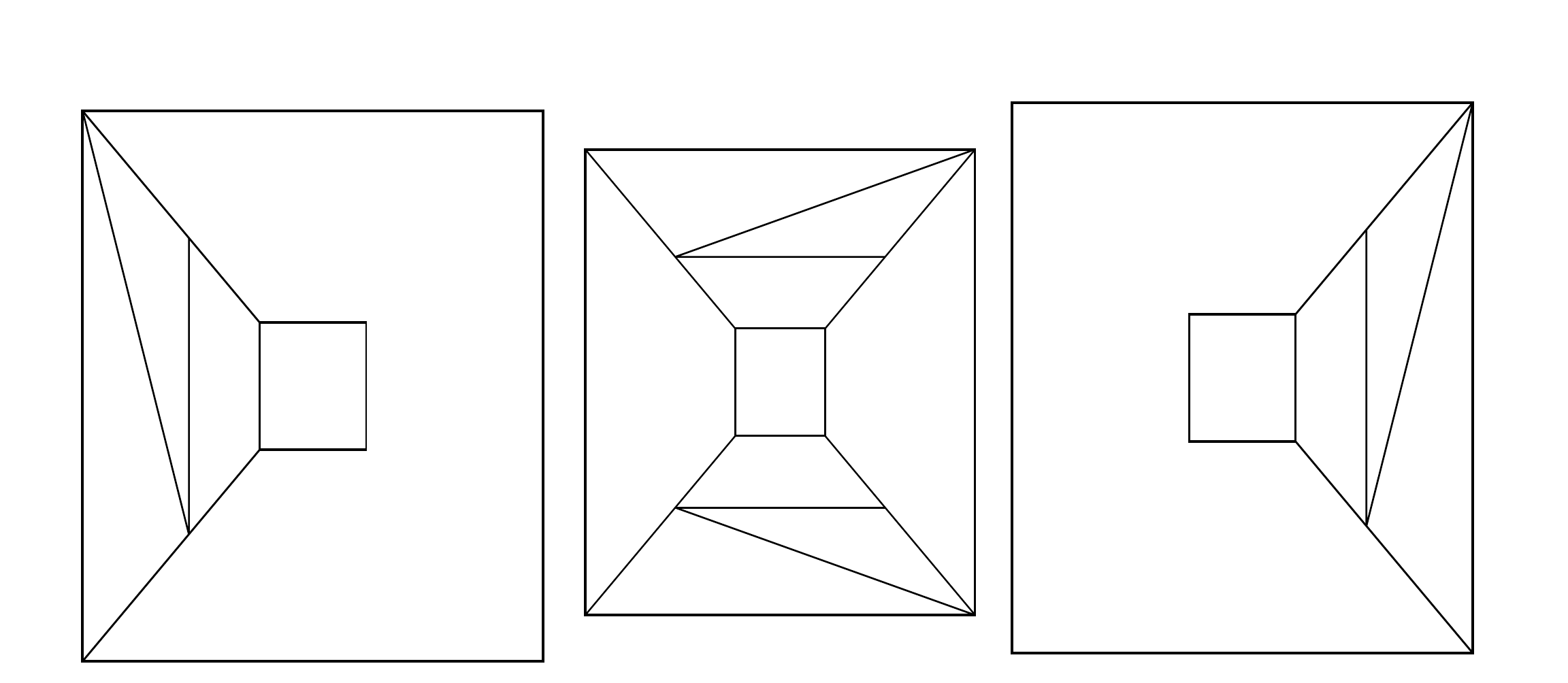
			\caption[]{The matrix fields $B$, $D$, and $C$, simulating horizontal and vertical dislocations.}
			\label{fig-dislocation-blocks}
		}
	\end{figure}
We can now define a field on the whole space by simply gluing together the three constructions, precisely we define the field $A_\theta\in L^1_{\textup{loc}}(\R^2;\R^{2\times 2})$ as follows:
	\begin{equation}
			A_\theta(x)=
			\begin{cases}
			B(x+l_D+l_B) \quad &x\in (-\infty, -l_D)\times \R,\\
			D(x) &x\in (-l_D,l_D)\times \R,\\
			C(x-l_D-l_C) &x\in (l_D,+\infty)\times \R.
			\end{cases}
	\end{equation}
 Observe that,
 because of the boundary conditions imposed to $B,D$ and $C$, and from \eqref{supp-curl-B} and \eqref{supp-curl-D}, no incompatibilities arise in gluing them together. Hence $\operatorname{supp}\operatorname{curl}A_\theta$ is given by the disjoint union of the (appropriately translated) supports of the curl of $B,C$ and $D$:
 \begin{equation}\label{curl-A}
     S_{A_\theta}:=\operatorname{supp}\operatorname{curl}A_\theta\subseteq \big(S_B-(l_D+l_B,0)\big )\cup S_D \cup \big(S_B+(l_D+l_B,0)\big ).
 \end{equation}
 Therefore, if we set $A:=R_\theta A_\theta$, then
 in view of \eqref{ident-cur-ruotato-D}, \eqref{ident-curl-ruotato-B} and    \eqref{ident-curl-ruotato-C}, we have that $A$ satisfies $(H2)$. Furthermore,
	since $R^+=R_{\theta+\alpha}$ and $R^-=R_{\theta-\alpha}$, we have that $A=I_{R^-,R^+}$ in $\R \smallsetminus (-2l_B-l_D,l_D+2l_C)\times \R$.\\
We now prove the upper bound
\eqref{eq-upper-construction-L}.
We set
 \begin{equation}
     h=4h_B+2h_D, \qquad l=4l_B+2l_D,
 \end{equation}
 we observe that $l\leq h$ and estimate the energy of $A$ in the set $ Q_L$, with $L>h$. 	Set $N_D=\lfloor \frac{L}{2h_D}\rfloor=\lfloor \frac{L\sin\alpha\sin\theta}{2\tau\eps}\rfloor$ and $N_B=N_C=\lfloor \frac{L}{2h_B}\rfloor=\lfloor \frac{L\sin\alpha\cos\theta}{2\tau\eps}\rfloor$.
	\begin{figure}[htb]
        \fontsize{8}{4}{
		\def\svgwidth{370pt}
\begingroup%
  \makeatletter%
  \providecommand\color[2][]{%
    \errmessage{(Inkscape) Color is used for the text in Inkscape, but the package 'color.sty' is not loaded}%
    \renewcommand\color[2][]{}%
  }%
  \providecommand\transparent[1]{%
    \errmessage{(Inkscape) Transparency is used (non-zero) for the text in Inkscape, but the package 'transparent.sty' is not loaded}%
    \renewcommand\transparent[1]{}%
  }%
  \providecommand\rotatebox[2]{#2}%
  \newcommand*\fsize{\dimexpr\f@size pt\relax}%
  \newcommand*\lineheight[1]{\fontsize{\fsize}{#1\fsize}\selectfont}%
  \ifx\svgwidth\undefined%
    \setlength{\unitlength}{663.30708661bp}%
    \ifx\svgscale\undefined%
      \relax%
    \else%
      \setlength{\unitlength}{\unitlength * \real{\svgscale}}%
    \fi%
  \else%
    \setlength{\unitlength}{\svgwidth}%
  \fi%
  \global\let\svgwidth\undefined%
  \global\let\svgscale\undefined%
  \makeatother%
  \begin{picture}(1,0.35470085)%
    \lineheight{1}%
    \setlength\tabcolsep{0pt}%
    \put(0,0){\includegraphics[width=\unitlength,page=1]{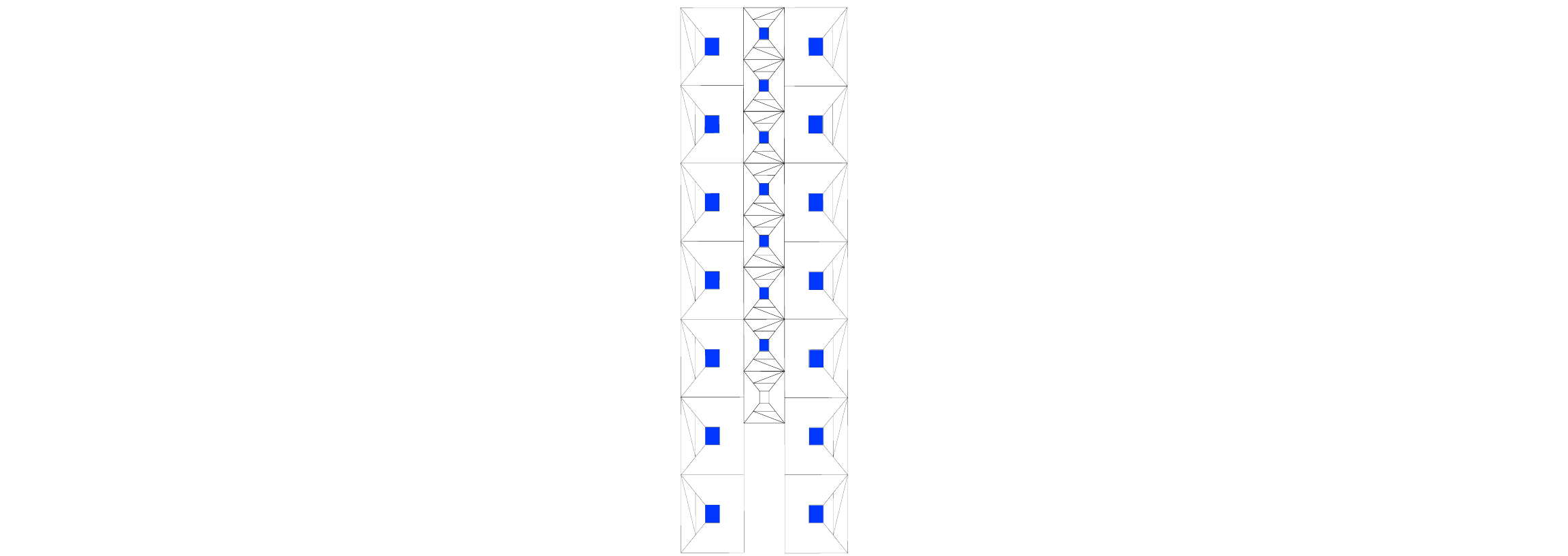}}%
    \put(0.20424435,0.17942178){\makebox(0,0)[lt]{\lineheight{0.40000001}\smash{\begin{tabular}[t]{l}$R_{\theta-\alpha}$\end{tabular}}}}%
    \put(0.68025509,0.17704703){\makebox(0,0)[lt]{\lineheight{0.40000001}\smash{\begin{tabular}[t]{l}$R_{\theta+\alpha}$\end{tabular}}}}%
    \put(0,0){\includegraphics[width=\unitlength,page=2]{anisotropic-construction-big-rectangle3.pdf}}%
  \end{picture}%
\endgroup%

		\caption[]{Construction of the field $A$ in the set  $ Q_L$. The set containing $\operatorname{Curl}A$ is represented in blue.}
             \label{figura4}
             }
	\end{figure}
 From \eqref{bound-core-B}, \eqref{bound-core-D} and \eqref{curl-A}
	we have that $S_A\cap Q_L$ is contained in the set $S$ which is the union of at most  $2N_B+N_D+6$ rectangles of area either $4r_B\rho_B$ or $4r_D\rho_D$, see Figure \ref{figura4}, hence from \eqref{bound-core-B} and \eqref{bound-core-D} we get
	\begin{equation}\label{f-upper-core}
		\begin{split}
			\Ec^\eps(S):=|B_{\lambda\eps}(S)|&\leq  2(N_B+2)\eps^2 (\lambda+\frac{2\tau}{\cos\theta})(\lambda+2\tau)\\
			&+(N_D+2)\eps^2(\lambda+\frac{2\tau}{\sin\theta})(\lambda+2\tau)\\
			&\leq CL\eps \sin\alpha,
		\end{split}
	\end{equation}
	the constant $C$ depending on $\lambda$ and $\tau$.
	Moreover, since the energy is invariant under left multiplication by rotations, we have that the elastic energy of $A_\theta$ is the same of $A$. Hence we found that
	\begin{equation}\label{f-elastic-energy-upper}
		\begin{split}
			\El^\eps( A_\theta,S)&\leq (N_B+2)\int_{Q_{l_B,h_B}\setminus B_{\lambda\eps}(Q_{r_B,\rho_B})} \dist^2(B, SO(2))\, dx\\
			&+(N_C+2)\int_{Q_{l_C,h_C}\setminus B_{\lambda\eps}(Q_{r_C,\rho_C}))} \dist^2(C, SO(2))\, dx\\
			&+(N_D+2)\int_{Q_{l_D,h_D}\setminus B_{\lambda\eps}(Q_{r_D,\rho_D}))} \dist^2(D, SO(2))\, dx.
		\end{split}
	\end{equation}
 We now estimate the elastic energy due to $B$ and $C$. We present only the estimates for $B$, the ones for $C$ being identical.
	Given the choice of the parameters in \eqref{prop-upper-param}, from \eqref{bound-energia-B} and \eqref{bound-energia-shear}
	we infer that
	\begin{equation}
		\begin{split}
			&(N_B+2)\int_{Q_{l_B,h_B}\setminus B_{\lambda\eps}(Q_{r_B,\rho_B})} \dist^2(B, SO(2))\, dx\\
			&\leq C(N_B+2)(h_B\eps\tau\sin\alpha\left(\big|\log(\sin\alpha)\big| +1  \right)+h_B\tau\eps\sin\alpha)\\
&\leq CL\eps\tau\sin\alpha\left(\big|\log(\sin\alpha)\big| +1  \right)+CL\tau\eps\sin\alpha.
		\end{split}
	\end{equation}
	In the same way, from \eqref{bound-energia-D3}, \eqref{bound-energia-D1} and \eqref{bound-energia-shear} we infer that the elastic energy due to $D$ can be estimated as follows
	\begin{equation}
	    \begin{split}
	        &(N_D+2)\int_{Q_{l_D,h_D}\setminus B_{\lambda\eps}(Q_{r_D,\rho_D}))} \dist^2(D, SO(2))\, dx\\
         &\leq CL\eps\tau\sin\alpha\left(\big|\log(\sin\alpha)\big| +1  \right)+CL\tau\eps\sin\alpha.
	    \end{split}
	\end{equation}
	Then, combining the estimates above we obtain
	\begin{equation*}
		\begin{split}
			F_\eps(A, Q_L)&\leq  	CL\sin\alpha\left(\big|\log(\sin\alpha)\big| +1  \right)+CL\sin\alpha\\
			&\leq CL\sin\alpha\left(\big|\log(\sin\alpha)\big| +1  \right),
		\end{split}
	\end{equation*}
	with the constant $C$ depending on $\tau$ and $\lambda$.
	Finally, to infer \eqref{upperbound} we set
	$$
	\tilde A(x)=
	\begin{cases}
		A(x)\quad & x\in Q_{L'},\\
		I_{R^-, R^+}(x) & x\in \R^2\setminus Q_{L'},
	\end{cases}
	$$
	with $L'= h(\lfloor \frac{L}{h}\rfloor-2)$, and notice that $(A,S)\in \mathcal{C}_\eps(Q_L)$ with $S=\supp \Curl A$ and
	\begin{equation}\label{eq-energia-tildeA}
		F_\eps(\tilde A,Q_L)
		\leq CL\sin\alpha \left(|\log(\sin\alpha)| +1  \right)+ C h,
	\end{equation}
	for a dimensional constant $C>0$, thus implying \eqref{upperbound} by taking the limit as $L\to +\infty$.
\end{proof}

\subsection{The optimal grain boundary energy}
Here we give an expression for the asymptotic  energy, namely the $\Gamma$-liminf, per unit  length of a straight interface between two different grains. Again thanks to the invariance under right rotation we only consider the case $n=e_1$.

Given two rotations $R^-,R^+\in SO(2)$ 
we define the energy density
\begin{equation}\label{eq-energy-density}
	\begin{split}
\psi_0(R^-,R^+,Q):=\inf\Big\{\liminf_{\eps\to 0}\frac{1}{\eps}\F_\eps(A_\eps,S_\eps,Q)\,: &(A_\eps,S_\eps)\in \mathcal{C}_\eps(Q)\,,\\ &A_\eps\chi_{B_{\lambda\eps}(S_\eps)}\to 	I_{R^-,R^+} \ \hbox{in } L^1(Q)\Big\}.
	\end{split}
\end{equation}
If $Q=Q_{\frac12}$ is the square of side $1$, then we use the simplified notation
$$
\psi_0(R^-,R^+):=\psi_0(R^-,R^+,Q_{\frac12}).
$$

The main fact leading to the identification of the optimal limiting energy with the thermodynamic limit is the proof that the optimal interfacial energy can be achieved by concentrating defects in a stripe near the interface.

\begin{theorem}\label{th-main-cell-problem}
For every $R^-,R^+\in SO(2)$ there exists a sequence $A_{\eps}\in L^1(Q_{\frac12};\R^{2\times 2})$ such that
\begin{equation}\label{eq-th-main-prop-cell-1}
A_\eps\to I_{R^-,R^+} \quad \textup{in $L^1(Q_{\frac12};\R^{2\times 2})$}\quad\textup{and}\quad\lim_{\eps\to 0^+} F_\eps(A_\eps, Q_{\frac12})
=\psi_0(R^-,R^+),
\end{equation}
and for every $\delta\in (0,\frac14)$
\begin{equation}\label{eq-main-prop-cell-pb}
A_\eps(x)=I_{R^-,R^+}(x)\qquad \forall\;x\in \R^2 \ :\ |x_1|>\delta,
\end{equation}
definitively as $\eps\to0^+$.
\end{theorem}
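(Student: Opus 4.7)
The strategy is to start from a near-optimal competitor $(\hat A_\eps, \hat S_\eps)$ for the cell-problem value $\psi_0(R^-,R^+)$ and modify it so that, outside the strip $\{|x_1|<\delta\}$, it coincides with $I_{R^-,R^+}$. As signalled in the introduction, the main obstruction is the preservation of the circulation constraint (H2), which forbids a direct cut-off of $A_\eps$ near $|x_1|=\delta$. The modification will therefore be carried out on a defect-free annular region, at the level of potentials, after Theorem \ref{rigidity2} brings the field close to a constant rotation on that region.

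\textbf{Step 1: Near-optimal, regularized sequence.} Fix $\delta\in(0,1/4)$ and pick $(\hat A_\eps,\hat S_\eps)\in \mathcal{C}_\eps(Q_{1/2})$ such that $\hat A_\eps\chi_{Q_{1/2}\setminus B_{\lambda\eps}(\hat S_\eps)}\to I_{R^-,R^+}$ in $L^1$ and $\eps^{-1}\F_\eps(\hat A_\eps,\hat S_\eps;Q_{1/2})\to\psi_0(R^-,R^+)$. Composing the refinements of Lemmas \ref{l.Linfty}, \ref{l.regolarizzazione rotore}, and \ref{l.armonica} on a slightly smaller cube, one may assume, at the cost of a bounded multiplicative factor, that $\hat A_\eps$ is bounded in $L^\infty$, harmonic on $Q_{1/2}\setminus \hat S_\eps$, and $|\Curl\hat A_\eps|\le C\eps^{-1}\chi_{\hat S_\eps}$.

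\textbf{Step 2: Concentration of defects at the interface.} The crucial intermediate fact is that, for a suitable choice of $(\hat A_\eps,\hat S_\eps)$, there exists $\tau_\eps\to 0$ with
\[
\hat S_\eps\subset \{|x_1|\le \tau_\eps\} \qquad\text{and}\qquad \eps^{-1}\F_\eps\bigl(\hat A_\eps,\hat S_\eps;\{|x_1|>\tau_\eps\}\cap Q_{1/2}\bigr)\to 0.
\]
The expected argument combines monotonicity/sub-additivity of $\psi$ (Proposition \ref{prop-cell-L}), the existence of the thermodynamic limit $\psi_\infty$, and a horizontal-translation rearrangement exploiting the invariance of the target $I_{R^-,R^+}$ under shifts parallel to the interface; any residual positive energy off $\{x_1=0\}$ could be collapsed onto the interface to produce a strictly better competitor, contradicting near-optimality.

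\textbf{Step 3: Rigidity on the defect-free strips.} By Step 2, for $\eps$ small, $B_{\lambda\eps}(\hat S_\eps)\subset\{|x_1|\le \delta/2\}$. Set $T_\eps^\pm:=\{x\in Q_{1/2}:\delta/2<\pm x_1<\delta\}$. These strips are defect-free, so $\Curl \hat A_\eps=0$ on $T_\eps^\pm$, and $\|\dist(\hat A_\eps,SO(2))\|_{L^2(T_\eps^\pm)}^2=o(\eps)$. Theorem \ref{rigidity2} then provides rotations $R^\pm_\eps\in SO(2)$, with $R^\pm_\eps\to R^\pm$, such that
\[
\|\hat A_\eps - R^\pm_\eps\|_{L^2(T_\eps^\pm)}^2 \le C\|\dist(\hat A_\eps,SO(2))\|_{L^2(T_\eps^\pm)}^2 = o(\eps).
\]

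\textbf{Step 4: Interpolation via potentials.} Since $\hat A_\eps$ is curl-free on the simply connected $T_\eps^\pm$, write $\hat A_\eps=\nabla u_\eps^\pm$ for some $u_\eps^\pm\in W^{1,2}(T_\eps^\pm;\R^2)$; by Poincaré's inequality, up to additive constants, $\|u_\eps^\pm - R^\pm_\eps x\|_{L^2(T_\eps^\pm)}^2 = o(\eps)$. Choose smooth cut-offs $\chi^\pm=\chi^\pm(x_1)$ with $\chi^\pm=0$ for $|x_1|\le 5\delta/8$, $\chi^\pm=1$ for $|x_1|\ge 7\delta/8$, $|(\chi^\pm)'|\le C/\delta$, and define
\[
A_\eps(x):=\begin{cases}
\hat A_\eps(x), & |x_1|\le \delta/2,\\
\nabla\bigl[(1-\chi^\pm(x_1))\,u_\eps^\pm(x)+\chi^\pm(x_1)\,R^\pm_\eps x\bigr], & x\in T_\eps^\pm,\\
R^\pm_\eps, & |x_1|\ge \delta.
\end{cases}
\]
On $T_\eps^\pm$, $A_\eps$ is a gradient, hence curl-free, and the three pieces match at $|x_1|=\delta/2,\delta$; no new circulation is produced and (H2) is preserved with the same defect set $\hat S_\eps\subset\{|x_1|\le \delta/2\}$. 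A final thin sub-layer correction, linearly interpolating from $R^\pm_\eps x$ to $R^\pm x$ on a strip of width $w_\eps\to 0$ chosen so that $w_\eps^{-1}|R^\pm - R^\pm_\eps|^2\to 0$, replaces $R^\pm_\eps$ by the exact rotation $R^\pm$, yielding $A_\eps\equiv I_{R^-,R^+}$ on $\{|x_1|\ge\delta\}$.

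\textbf{Step 5: Energy estimate and conclusion.} Expanding the modification,
\[
\int_{T_\eps^\pm}\dist^2(A_\eps,SO(2))\,dx \le C\|\hat A_\eps - R^\pm_\eps\|_{L^2(T_\eps^\pm)}^2 + C\delta^{-2}\|u_\eps^\pm - R^\pm_\eps x\|_{L^2(T_\eps^\pm)}^2 = o(\eps).
\]
Since the region $\{|x_1|\ge\delta\}$ carries no elastic energy and no defects, and the interior region is unchanged, $F_\eps(A_\eps,Q_{1/2})\le F_\eps(\hat A_\eps,Q_{1/2})+o(1)\to\psi_0(R^-,R^+)$. Combined with $A_\eps\to I_{R^-,R^+}$ in $L^1$ and the definition of $\psi_0$, the reverse $\liminf$ inequality follows, giving the full limit.

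\textbf{Main obstacle.} The Hausdorff-type concentration of Step 2 is the technical heart of the argument: mass concentration alone is insufficient to guarantee $T_\eps^\pm$ defect-free, and one needs to combine the localized Lauteri--Luckhaus estimate with a rearrangement that uses translation invariance along the interface to exclude spurious defects far from $\{x_1=0\}$ in a near-optimal sequence.
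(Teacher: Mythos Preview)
Your proof has a genuine gap at Step~2, and the route you take in Steps~3--4 depends essentially on it.

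\medskip

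\textbf{The gap.} You assert a Hausdorff-type confinement of the defect set, $\hat S_\eps\subset\{|x_1|\le\tau_\eps\}$, but the argument you sketch (sub-additivity of $\psi$, existence of the thermodynamic limit, translation invariance) delivers at best \emph{energy} concentration --- i.e., $\eps^{-1}\F_\eps(\hat A_\eps,\hat S_\eps;Q_{1/2}\setminus Q_{\delta,1/2})\to 0$ --- not geometric confinement of $\hat S_\eps$. These are very different: even with vanishing energy outside the strip, $\hat S_\eps$ may well intersect $T_\eps^\pm$ (the core contribution is only $o(\eps)$, but $|B_{\lambda\eps}(S)|=o(\eps)$ does not force $S\cap T_\eps^\pm=\emptyset$). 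Once $T_\eps^\pm$ is not defect-free, it is no longer simply connected relative to the curl constraint, the global potential $u_\eps^\pm$ in Step~4 need not exist, and the whole cut-off-at-the-potential-level construction collapses. You flag this as the ``main obstacle'' but do not resolve it; the rearrangement argument you propose is not carried out and it is not clear how it would yield the pointwise inclusion $\hat S_\eps\subset\{|x_1|\le\tau_\eps\}$.

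\medskip

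\textbf{How the paper proceeds instead.} The paper proves only energy concentration (Proposition~\ref{prop-cell-formula-properties}), via the pigeonhole/scaling argument you allude to, and then \emph{avoids} needing $T_\eps^\pm$ to be defect-free. The key idea (Proposition~\ref{th-changing-bc}) is: from $|B_{\lambda\eps}(S_\eps)\cap T|=o(\eps)$ on a well-chosen thin vertical strip $T$ of width $\rho_\eps$, the \emph{projection} of $S_\eps$ onto the $x_1$-axis has one-dimensional measure $o(\rho_\eps)$, so there exists a single vertical line $\gamma^-=\{t_0\}\times(0,1)$ disjoint from $B_{\lambda\eps}(S_\eps)$ on which the tangential trace $A_\eps e_2$ is close to $Re_2$ in $L^2$. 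One then interpolates from this trace to $R$ by an explicit construction (Lemma~\ref{lemma-locally-elastic-connection-eps}) that \emph{introduces} new, controlled defects (finitely many horizontal segments) so that circulations land in $\tau\eps\Z^2$; the point is that the new curl is quantized by projecting averages onto the lattice, and the total cost is $o(\eps)$. This bypasses entirely the need for a global potential on a large region.

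\medskip

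A secondary issue: in Step~1 you apply the refinement lemmas ``at the cost of a bounded multiplicative factor'', but this inflates the energy by a constant $C>1$, so after Steps~3--5 you would only obtain $\limsup F_\eps(A_\eps)\le C\psi_0$, not the sharp limit. The paper does not regularize here; $L^2$ convergence of $A_\eps\chi_{\Omega\setminus B_{\lambda\eps}(S_\eps)}$ is supplied directly by the compactness theorem.
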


An immediate consequence of the theorem is the following.

\begin{corollary}\label{cor-main-cell}
The optimal interfacial energy equals the thermodynamic limit
$$
\psi_0(R^-,R^+)=\psi_\infty(R^-,R^+)
\qquad \forall\;R^-,R^+\in SO(2).
$$
\end{corollary}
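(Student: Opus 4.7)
The proof splits into two inequalities. The first, $\psi_\infty(R^-,R^+)\leq\psi_0(R^-,R^+)$, is essentially built into Theorem \ref{th-main-cell-problem}. Let $(A_\eps)$ be the concentrated sequence from that theorem: $F_\eps(A_\eps,Q_{1/2})\to\psi_0$ and, for some fixed $\delta\in(0,1/4)$, $A_\eps\equiv I_{R^-,R^+}$ on $Q_{1/2}\cap\{|x_1|>\delta\}$ eventually. Setting $\tilde A_\eps(y):=A_\eps(\eps y)$ on $Q_{1/(2\eps)}$ and extending by $I_{R^-,R^+}$ outside, a change of variables in the line integrals shows $(\tilde A_\eps,\eps^{-1}S_{A_\eps})\in\mathcal{C}_1(Q_{1/(2\eps)})$; moreover $\tilde A_\eps\equiv I_{R^-,R^+}$ outside $Q_{\delta/\eps}$, with $\delta/\eps<1/(2\eps)$. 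Hence $\tilde A_\eps$ is admissible for $\psi(R^-,R^+,1/(2\eps))$, and the scaling \eqref{eq-scaling-property-3} yields
\begin{equation*}
\frac{\psi(R^-,R^+,1/(2\eps))}{1/\eps}\leq \eps\, F_1(\tilde A_\eps,Q_{1/(2\eps)})=F_\eps(A_\eps,Q_{1/2}).
\end{equation*}
Passing to the limit $\eps\to 0^+$ and using Proposition \ref{prop-cell-L} gives $\psi_\infty\leq\psi_0$.

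For the opposite inequality, $\psi_0\leq\psi_\infty$, I rescale a nearly optimal competitor for $\psi_\infty$ after periodizing it vertically. Fix $\eta>0$. By definition of $\psi_\infty$ there exist $L$ large and $A_L$ with $A_L\equiv I_{R^-,R^+}$ outside $Q_{L'}$ (for some $L'<L$) and $F_1(A_L,Q_L)\leq 2L(\psi_\infty+\eta)$. I extend $A_L$ periodically in the $x_2$-direction with period $2L$ on the strip $\{|x_1|<L\}$ and complete by $I_{R^-,R^+}$ outside the strip, obtaining $\hat A_L\in L^1_{\loc}(\R^2;\R^{2\times 2})$. Since $A_L\equiv I_{R^-,R^+}$ on the top and bottom faces of $Q_L$, no singularities are created across the gluings; the periodized set $\hat S_L:=\bigcup_{k\in\Z}(S_{A_L}+2kLe_2)$ consists of disjoint translates of the components of $S_{A_L}$, each inheriting the $\tau\Z^2$-circulation of $A_L$, so $(\hat A_L,\hat S_L)$ satisfies (H2) on every bounded subdomain.

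For $\eps$ with $\eps L<1/4$, I define $B_\eps(x):=\hat A_L(x/\eps)$ on $Q_{1/2}$. Counting at most $1/(2\eps L)$ full periods of $\hat A_L$ contained in $Q_{1/(2\eps)}$, and bounding the contribution of a single incomplete period by $2L(\psi_\infty+\eta)$, the scaling \eqref{eq-scaling-property-3} delivers
\begin{equation*}
F_\eps(B_\eps,Q_{1/2})=\eps\, F_1(\hat A_L,Q_{1/(2\eps)})\leq \psi_\infty+\eta+C\eps L\,(\psi_\infty+\eta).
\end{equation*}
Since $B_\eps\equiv I_{R^-,R^+}$ on $\{|x_1|>\eps L\}$ and $A_L$ has $L^1(Q_L)$-norm controlled by $L^2$, a diagonal extraction with $L=L(\eps)\to\infty$ and $\eps L\to 0^+$ (for example $L=\eps^{-1/2}$) produces a sequence $B_\eps\to I_{R^-,R^+}$ in $L^1(Q_{1/2})$ satisfying $\limsup_\eps F_\eps(B_\eps,Q_{1/2})\leq \psi_\infty+\eta$. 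Letting $\eta\to 0^+$ yields $\psi_0\leq\psi_\infty$.

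The most delicate step is the verification of condition (H2) for the periodized field $\hat A_L$: loops enclosing many periodic copies of the singular set must still carry circulation in $\tau\Z^2$. This is guaranteed precisely by the boundary condition $A_L\equiv I_{R^-,R^+}$ on $\partial Q_L$, which prevents any extra circulation from arising across the gluings at $\{x_2=(2k+1)L\}$. Beyond this point, both inequalities are routine consequences of the scaling identity \eqref{eq-scaling-property-3} and of the concentration property of the optimal sequence provided by Theorem \ref{th-main-cell-problem}.
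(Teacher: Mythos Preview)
Your argument for the inequality $\psi_\infty\leq\psi_0$ has a genuine gap. Theorem \ref{th-main-cell-problem} only asserts that $A_\eps=I_{R^-,R^+}$ on $\{|x_1|>\delta\}$; it gives no information on the strip $\{|x_1|\le\delta\}$ near the top and bottom of $Q_{1/2}$. Consequently your rescaled field satisfies $\tilde A_\eps=I_{R^-,R^+}$ only on $\{|y_1|>\delta/\eps\}$, \emph{not} on $\R^2\setminus Q_{\delta/\eps}$ as you claim. When you extend by $I_{R^-,R^+}$ outside $Q_{1/(2\eps)}$ you still have $\tilde A_\eps\neq I_{R^-,R^+}$ in the region $\{|y_1|\le\delta/\eps,\ \delta/\eps<|y_2|<1/(2\eps)\}$, so there is no $L'<1/(2\eps)$ with $\tilde A_\eps=I_{R^-,R^+}$ on $\R^2\setminus Q_{L'}$, and $\tilde A_\eps$ is \emph{not} admissible in the definition \eqref{eq-cell-problem-eps} of $\psi(R^-,R^+,1/(2\eps))$. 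The paper repairs this by truncating at a strictly smaller square $Q_{1/2-\sigma}$, setting $\hat A_\eps=A_\eps\chi_{Q_{1/2-\sigma}}+I_{R^-,R^+}(1-\chi_{Q_{1/2-\sigma}})$ and enlarging the singular set by the segments $\{|x_1|\le\delta,\ x_2=\pm(1/2-\sigma)\}$ and $\{x_1=0,\ 1/2-\sigma\le|x_2|<1/2\}$; this costs an extra $C(\delta+\sigma)$ in energy, which is sent to $0$ at the end.

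For the inequality $\psi_0\leq\psi_\infty$ your construction is essentially the paper's, but the diagonal choice $L=L(\eps)\to\infty$ is both unnecessary and not fully justified: to get $B_\eps\to I_{R^-,R^+}$ in $L^1$ along such a diagonal you would need a uniform bound on $\|A_L-I_{R^-,R^+}\|_{L^1(Q_L)}$ in terms of $L$, and nothing in the definition of $\psi$ provides one (the competitor $A_L$ is unconstrained on the core region). The paper simply keeps $L$ fixed: then $\eps L\to 0$ automatically, $\|A_\eps-I_{R^-,R^+}\|_{L^1(Q_{1/2})}\le C\eps\,\|A_L-I_{R^-,R^+}\|_{L^1(Q_L)}\to 0$, and passing to the limit yields $\psi_0\le\tfrac{1}{2L}F_1(A_L,Q_L)\le\psi_\infty+2\eta$, after which one lets $\eta\to 0$.
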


\begin{proof}

We show separately the two inequalities $\psi_\infty\leq \psi_0$ and $\psi_0\leq \psi_\infty$.

For the first inequality, consider the sequence $A_\eps$, whose existence is guaranteed by Theorem \ref{th-main-cell-problem} such that \eqref{eq-th-main-prop-cell-1} and \eqref{eq-main-prop-cell-pb} hold.
For any  fixed $\delta\in (0,\frac14)$ and $\sigma\in (0,\frac12-\delta)$, we set
\begin{equation}\label{eq-sequence-corollary}
\begin{split}
&\hat A_\eps = A_\eps\chi_{Q_{\frac12-\sigma}}+ I_{R^-,R^+} (1-\chi_{Q_{\frac12-\sigma}}),\\
\hat S_\eps = S_{A_\eps}\cup \{x_1=0,\, &1/2-\sigma\leq |x_2|<1/2\}\cup \{|x_1|\leq \delta, x_2=\pm (1/2-\sigma)\}.
\end{split}
\end{equation}
Since \eqref{eq-main-prop-cell-pb} holds definitively, $(\hat A_\eps, \hat S_\eps)\in \mathcal{C}_\eps(Q_{\frac12})$ (indeed, points which are added to $S_{A_\eps}$ are connected to the boundary of $Q_{\frac12}$ and are not considered to determine the compatibility condition (H2)).
Moreover,
\[
F_\eps(\hat A_\eps, Q_{\frac12}) \leq  F_\eps(A_\eps, Q_{\frac12}) + C(\delta+\sigma),
\]
for a costant $C>0$ depending on $\lambda$. Therefore, by the scaling properties of the energy \eqref{eq-scaling-property-3}, we infer that
\begin{align*}
\eps \psi\big(R^-,R^+,\eps^{-1}\big) & = \inf \Big\{F_\eps (A, Q_{\frac12}) : A = I_{R^-,R^+} \quad\textup{in } \R^2\setminus Q_{{\frac12}-\eta}, \ \eta>0\Big\}\\
& \leq F_\eps (\hat A_\eps,  Q_{\frac12}) \leq  F_\eps(A_\eps, Q_{\frac12}) + C(\delta+\sigma).
\end{align*}
Sending $\eps$ to zero, we get
\begin{align*}
\psi_\infty (R^-,R^+) &= \lim_{\eps\to 0^+}\eps \psi\big(R^-,R^+,\eps^{-1}\big)\\
&\leq
\lim_{\eps\to 0^+}F_\eps(A_\eps, Q_{\frac12}) + C(\delta+\sigma)\\
& = \psi_0(R^-,R^+) + C(\delta+\sigma).
\end{align*}
By the arbitrariness of $\delta,\sigma>0$ the conclusion follows.

As far as the reverse inequality $\psi_0\leq \psi_\infty$ is concerned, for every $\eta>0$ we find $L>0$ and $A_{L}\in L^1_{\textup{loc}}(\R^2;\R^{2\times2})$ be a matrix field such that $A_{L}=I_{R^-,R^+}$ outside $Q_{L'}$, for some $L'<L$, and
\begin{equation}\label{e.scelta}
\psi_\infty(R^-,R^+)\geq
\frac{1}{2L}\psi(R^-,R^+,L) - \eta
\geq \frac{1}{2L}F_1(A_{L},Q_{L})-2\eta.
\end{equation}
We consider the field obtained from $A_L$ extending it periodically in the vertical direction:
\[
\hat A_{L}(x_1,x_2+2kL) := A_{L}(x_1,x_2) \qquad \forall\;(x_1,x_2)\in Q_L,\quad \forall\; k\in \Z.
\]
Then, the fields
\[
A_\eps(x):= \hat A_{L}(\eps^{-1}x) \qquad \forall\; x\in \R^2
\]
belong to $L^1_{\rm loc}(\R^2;\R^{2\times2})$ and, considering the periodicity of $A_\eps$ in the vertical direction with period $2L\eps$, we get
\begin{align*}
\int_{Q_1} |A_\eps(x) -I_{R^-,R^+}(x)|\,d x & \leq \left(\Big\lfloor \frac{1}{2L\eps}\Big\rfloor+2\right) \int_{Q_{L\eps}} |A_\eps(x) -I_{R^-,R^+}(x)|\,d x\\
&=\left(\Big\lfloor \frac{1}{2L\eps}\Big\rfloor+2\right) \eps^2\int_{Q_{L}} |A_{L}(x) -I_{R^-,R^+}(x)|\,d x \to 0 \quad \textup{as }\; \eps \to 0^+,
\end{align*}
where $\Big\lfloor \frac{1}{2L\eps}\Big\rfloor+2$ is an upper bound for the number of disjoint squares of side-length $2L\eps$ with center in the $x_2$-axis contained in $Q_\frac{1}{2}$.
Analogously, the energy of $A_\eps$ is estimated as
\begin{align*}
F_\eps(A_\eps,Q_\frac{1}{2}) \leq \left(\Big\lfloor \frac{1}{2L\eps}\Big\rfloor+2\right)\eps F_1(A_{L},Q_{L}).
\end{align*}
Therefore, using the sequence $A_\eps$ to test $\psi_0$, we get
\begin{align*}
\psi_0(R^-,R^+)&\leq \liminf_{\eps\to 0^+}F_\eps(A_\eps,Q_\frac{1}{2})\\
& \leq \liminf_{\eps\to 0^+}\left(\Big\lfloor \frac{1}{2L\eps}\Big\rfloor+2\right)\eps F_1(A_{L},Q_{L})\\
&=\frac{1}{2L} F_1(A_{L},Q_{L})\stackrel{\eqref{e.scelta}}{\leq} \psi_{\infty}(R^-,R^+)+2\eta,
\end{align*}
and by the arbitrariness of $\eta>0$ the conclusion follows.
\end{proof}

The proof of Theorem \ref{th-main-cell-problem} is postponed to the next sections.

\section{Optimal recovery sequences}
For the proof of Theorem \ref{th-main-cell-problem} we need to construct a recovery sequence for the optimal grain boundary energy which is constantly equal to two rotations on the two sides of the interface.
This will be achieved in several steps in what follows.

\subsection{Energy concentration}
In the first proposition we show that the recovery sequences of matrix fields $A_\eps$ for computing the optimal grain boundary energy concentrate the energy in a neighborhood of the limiting interface.

\begin{proposition}\label{prop-cell-formula-properties}
Let $R^-,R^+\in SO(2)$ and $(A_{\eps},S_\eps)\in \mathcal{C}_\eps(Q_{\frac12})$ be such that
\[
A_\eps\chi_{B_{\lambda\eps}(S_\eps)}\to I_{R^-,R^+} \quad \textup{in $L^1(Q_{\frac12};\R^{2\times 2})$}\quad\textup{and}\quad\lim_{\eps\to 0^+} \frac1\eps \F_\eps(A_\eps,S_\eps,Q_{\frac12})
=\psi_0(R^-,R^+).
\]
Then, for every $\delta \in \big(0,\frac14\big)$, we have that
\begin{equation}\label{eq-prop-striscia}
\lim_{\eps\to 0}\frac{1}{\eps} \F_\eps(A_\eps, S_\eps, Q_{\frac12}\setminus Q_{\delta,\frac12})=0,
\end{equation}
where $Q_{\delta,{\frac12}}=(-\delta,\delta)\times(-\frac12,\frac12)$.
\end{proposition}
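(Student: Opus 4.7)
The plan is to argue by contradiction. Along a subsequence (not relabeled), set
\[
c := \lim_{\eps\to 0}\frac{1}{\eps}\F_\eps(A_\eps,S_\eps;Q_{\frac12}\setminus Q_{\delta,\frac12}),
\]
and assume $c>0$. Then $\frac{1}{\eps}\F_\eps(A_\eps,S_\eps;Q_{\delta,\frac12}) \to \psi_0(R^-,R^+)-c$. I will construct a new admissible sequence $\tilde A_\eps$ that matches $A_\eps$ on $Q_{t_\eps,\frac12}$ for some $t_\eps\in(\delta,2\delta)$, equals the constant rotations $R^\pm$ outside a slightly larger strip, and whose total energy is at most $\psi_0(R^-,R^+)-c+o(1)$, contradicting the infimum definition of $\psi_0(R^-,R^+)$.

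First I perform a slicing argument. Since $|B_{\lambda\eps}(S_\eps)|\leq \F_\eps(A_\eps,S_\eps) = O(\eps)$, the elastic energy density integrates to $O(\eps)$, and $A_\eps\to I_{R^-,R^+}$ in $L^1(Q_{\frac12})$, an application of Fubini on the slab $\delta<|x_1|<2\delta$ (together with Markov's inequality) produces good radii $t_\eps\in(\delta,2\delta)$ such that: the slices $\{x_1=\pm t_\eps\}\cap Q_{\frac12}$ lie at distance larger than $\lambda\eps$ from $S_\eps$; the one-dimensional density of the elastic energy on these slices is $o(1)$; and the trace of $A_\eps$ on them is $L^1$-close to $R^\pm$.

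Next I refine $A_\eps$ on a thin strip $T^\pm_\eps := \{t_\eps<\pm x_1<t_\eps+\eta_\eps\}\cap Q_{\frac12}$ of width $\eta_\eps\to 0$ (e.g.\ $\eta_\eps=\sqrt{\eps}$) by the procedure of Section \ref{s.refinement}, and then apply the rigidity estimate of Theorem \ref{rigidity2} on $T^\pm_\eps$: this yields rotations $R^\pm_\eps\in SO(2)$ with $|R^\pm_\eps-R^\pm|\to 0$ approximating $A_\eps$ in $L^2(T^\pm_\eps)$. I then replace $A_\eps$ by $R^\pm_\eps$ on $T^\pm_\eps$ via a Whitney-type patching in the spirit of Lemma \ref{l.Linfty} (absorbing the patching curl into an enlarged core set at cost $o(1)$ by the rigidity bound), and I interpolate from $R^\pm_\eps$ to $R^\pm$ on a further $O(\eps)$-wide layer using the Read--Shockley construction of Proposition \ref{prop-upper-bound} applied to the vanishing angle $|R^\pm_\eps-R^\pm|$. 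Finally I extend by the constants $R^\pm$ outside. Proposition \ref{prop-upper-bound} produces circulations in $\eps\tau\Z^2$, so the full field $\tilde A_\eps$ belongs to $\mathcal{C}_\eps(Q_{\frac12})$, converges to $I_{R^-,R^+}$ in $L^1(Q_{\frac12})$, and by the energy bound \eqref{eq-upper-construction-L} of Proposition \ref{prop-upper-bound} satisfies
\[
\frac{1}{\eps}\F_\eps(\tilde A_\eps,\tilde S_\eps;Q_{\frac12})\leq \frac{1}{\eps}\F_\eps(A_\eps,S_\eps;Q_{t_\eps,\frac12}) + o(1) \leq \psi_0(R^-,R^+) - c + o(1),
\]
which contradicts $\psi_0(R^-,R^+)\leq \liminf_{\eps\to 0}\frac{1}{\eps}\F_\eps(\tilde A_\eps,\tilde S_\eps;Q_{\frac12})$.

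The main obstacle is precisely the transition layer: one must pass from the generic field $A_\eps$, only $L^1$-close to $R^\pm$ on good slices, to the exact constant rotations $R^\pm$ through a strip of total width $o(1)$, while spending only $o(1)$ rescaled energy \emph{and} without creating any new incompatibility, since cutoff functions are forbidden by (H2). The rigidity estimate of Theorem \ref{rigidity2} supplies an approximating rotation $R^\pm_\eps$, but the residual circulation mismatch at the seam has to be reabsorbed by an explicit quantized dislocation pattern: the Read--Shockley construction of Proposition \ref{prop-upper-bound} delivers exactly such an interpolation, whose cost $C|R^\pm_\eps-R^\pm|(|\log|R^\pm_\eps-R^\pm||+1)$ vanishes with the angle and whose circulations lie in $\eps\tau\Z^2$, so that admissibility is preserved throughout the gluing.
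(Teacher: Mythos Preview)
Your contradiction argument has a genuine gap in the energy accounting. You cut at $t_\eps\in(\delta,2\delta)$, so $Q_{t_\eps,\frac12}\supset Q_{\delta,\frac12}$; hence
\[
\frac{1}{\eps}\F_\eps(A_\eps,S_\eps;Q_{t_\eps,\frac12})\;\geq\;\frac{1}{\eps}\F_\eps(A_\eps,S_\eps;Q_{\delta,\frac12})\;=\;\psi_0(R^-,R^+)-c+o(1),
\]
which is the wrong direction for your claimed upper bound $\leq\psi_0-c+o(1)$. Nothing forbids the entire excess energy $c$ from sitting in the slab $\delta<|x_1|<t_\eps$, in which case you discard nothing and the competitor has energy $\psi_0+o(1)$, no contradiction. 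If instead you cut at $t_\eps<\delta$ to guarantee the bound, you lose any control on the energy near the cut, and the transition cost can no longer be made $o(1)$. Either way, the $o(1)$ transition cost you invoke ultimately relies on the rescaled energy in the slab being $o(1)$, which is precisely the conclusion of the proposition; so the scheme is circular. (Your rigidity step is also delicate: the thin strip $T^\pm_\eps$ of width $\sqrt\eps$ has degenerating aspect ratio, so the rigidity constant blows up, and the curl term $|\Curl A''_\eps|(T^\pm_\eps)$ is only $O(1)$, not $o(1)$.)

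The paper's proof avoids any construction and is a few lines of equipartition. One slices $Q_{\frac12}$ into $k$ horizontal strips $E_{k,i}=(-\tfrac12,\tfrac12)\times(-\tfrac12+\tfrac{i}{k},-\tfrac12+\tfrac{i+1}{k})$ and lets $Q^{k,i}\subset E_{k,i}$ be the square of side $1/k$ centered on the interface. Picking the strip $i_0(\eps)$ of minimal energy and using that (after translation) $A_\eps\vert_{Q^{k,i_0(\eps)}}$ is a competitor for $\psi_0(R^-,R^+,Q_{1/(2k)})=\tfrac{1}{k}\psi_0(R^-,R^+)$ (by scaling), one gets
\[
\tfrac{1}{k}\psi_0=\lim_\eps\tfrac{1}{k}\sum_i\tfrac{1}{\eps}\F_\eps(\cdot,E_{k,i})
\;\geq\;\liminf_\eps\tfrac{1}{\eps}\F_\eps(\cdot,E_{k,i_0})
\;\geq\;\liminf_\eps\tfrac{1}{\eps}\F_\eps(\cdot,Q^{k,i_0})
\;\geq\;\tfrac{1}{k}\psi_0.
\]
All inequalities are equalities, so each strip carries exactly $\tfrac{1}{k}\psi_0$ in the limit, and all of it lies in $Q^{k,i}$; summing gives $\tfrac{1}{\eps}\F_\eps(\cdot,Q_{\frac12}\setminus Q_{1/(2k),\frac12})\to 0$. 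The clearing-out machinery you sketch (Proposition~\ref{th-changing-bc}) is developed \emph{afterwards} in the paper and explicitly takes \eqref{eq-prop-striscia} as input.
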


\begin{proof}
For $k\in \N\setminus\{0,1\}$ we consider the rectangles
\[
E_{k,i}=\Big(-\frac12,\frac12\Big)\times \Big(-\frac12+\frac{i}{k}, -\frac12+\frac{i+1}{k}\Big) \qquad i=0,..,k-1,
\]
and let $Q^{k,i}$ the square of side-length $\frac{1}{k}$ centered at $(0,-\frac12+\frac{i}{k}+\frac1{2k})$.
For any $\eps>0$ we choose $i_0(\eps)\in \{0,\ldots, k-1\}$ so that
\[
\F_\eps(A_\eps, S_\eps,E_{k,i_0(\eps)})=\min_i \F_\eps(A_\eps, S_\eps, E_{k,i}).
\]
By the scaling property \eqref{eq-scaling-property} we then have that
\begin{equation}
\begin{split}
\frac{1}{k}\,\psi_0(R^-,R^+)&=\lim_{\eps\to 0}\frac{1}{k}\sum_{i=0}^{k-1}\frac1\eps \F_\eps(A_\eps, S_\eps, E_{k,i})\geq\liminf_{\eps\to 0}\frac1\eps \F_\eps(A_\eps, S_\eps, E_{k,i_0(\eps)})\\
&
\geq \liminf_{\eps\to 0}\frac1\eps \F_\eps(A_\eps, S_\eps, Q^{k,i_0(\eps)})\geq \psi_0(R^-,R^+, Q_{\frac{1}{2k}})=\frac{1}{k}\,\psi_0(R^-,R^+).
\end{split}
\end{equation}
This shows also that
\[
\lim_{\eps\to 0} \frac1\eps \F_\eps(A_\eps, S_\eps,E_{k,i})=\lim_{\eps\to 0} \frac1\eps \F_\eps(A_\eps, S_\eps, Q^{k,i})=\psi_0(R^-,R^+,Q_{\frac{1}{2k}}) \qquad \forall\;i\in\{0,...,k-1\}.
\]
Passing to the complementary set of $T_{\delta}$, we conclude \eqref{eq-prop-striscia} for $\delta=\frac{1}{2k}$. The conclusion for every $\delta\leq \frac14$ is an immediate consequence of the monotonicity of $\frac1\eps \F_\eps(A_\eps, S_\eps,Q_{\delta,\frac12})$ with respect to $\delta$.
\end{proof}

\subsection{Clearing out}
If a matrix field is $L^2$ closed to a fixed rotation and its  energy is infinitesimal, then incompatibilities can be actually removed on a large part of the domain replacing the field with the constant rotation. This fact is used to change the  boundary conditions of an optimal sequence for $\psi_0$.

\begin{proposition}\label{th-changing-bc}
Let $R\in SO(2)$ and let $(A_\eps,S_\eps)\in \mathcal{C}_\eps((a,b)\times(c,d))$ and
\begin{equation}\label{eq-th-changing-bc-0}
\frac1\eps\F_\eps(A_\eps,S_\eps,(a,b)\times(c,d))+\int_{(0,1)^2\setminus B_{\lambda\eps}(S_\eps)} |A_\eps-R|^2 \,dx\to 0\quad \hbox{as } \eps\to 0.
\end{equation}
Then, for every $\sigma>0$ there exists a sequence $\tilde A_\eps\in L^2((a,b)\times(c,d);\R^{2\times2})$ converging to $R$ in $L^2$ such that
\begin{equation}\label{eq-changing-bc-2}
\tilde A_\eps=A_\eps\quad \hbox{in } (a,a+\sigma)\times(c,d), \qquad \tilde A_\eps= R\quad \hbox{in }  (a+2\sigma,b)\times(c,d),
\end{equation}
and
\begin{equation}\label{eq-changing-bc-3}
F_\eps(	\tilde A_\eps, (a,b)\times(c,d))\to 0 \quad \hbox{as } \eps\to 0^+.
\end{equation}
\end{proposition}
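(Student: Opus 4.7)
The plan is to interpolate from $A_\eps$ to the constant $R$ by applying a cutoff to a \emph{potential} of $A_\eps$ on a defect-free vertical slab, rather than to the matrix field itself. A naive cutoff at the level of the field, $\tilde A_\eps := (1-\phi)A_\eps+\phi R$, would create a curl $\phi'(R-A_\eps)\otimes e_1$ on the transition strip, of total mass not quantized in $\tau\eps\Z^2$; one would then be forced to include the transition strip (or a line inside it) in $\tilde S_\eps$, at the inadmissible core cost of order $(d-c)\lambda\eps/\eps=\Theta(1)$. Cutting a potential instead makes the interpolating field a gradient, hence curl-free and compatible with (H2) for free.

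First I would select a good slab $U_\eps := (t_\eps-\gamma_\eps, t_\eps+\gamma_\eps)\times(c,d)$ with $t_\eps\in(a+\sigma,a+\tfrac{3\sigma}{2})$ and $\gamma_\eps\downarrow 0$, disjoint from $B_{\lambda\eps}(S_\eps)$. This uses a Vitali covering of $S_\eps$ by at most $CF_\eps(A_\eps)/\eps$ disjoint balls of radius $\lambda\eps$ (available since $|B_{\lambda\eps}(S_\eps)|\leq\F_\eps=o(\eps)$), whose projections onto the $e_1$-axis sum to at most $CF_\eps(A_\eps)\to 0$, guaranteeing the existence of a defect-free sub-interval of length $\gtrsim\eps/F_\eps(A_\eps)$. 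Combined with a Fubini estimate, I simultaneously arrange $\|A_\eps-R\|_{L^2(U_\eps)}^2\leq C\gamma_\eps\delta_\eps^2/\sigma$ where $\delta_\eps^2:=\|A_\eps-R\|_{L^2((a,b)\times(c,d)\setminus B_{\lambda\eps}(S_\eps))}^2\to 0$. On $U_\eps$ the field $A_\eps$ is curl-free (by (H1)) and its circulation on every closed loop vanishes (by (H2) together with simple connectedness of $U_\eps\subset\Omega\setminus S_\eps$), so $A_\eps=\nabla v_\eps$ for a potential $v_\eps:U_\eps\to\R^2$; setting $w_\eps(x) := v_\eps(x)-Rx$ with zero mean on $U_\eps$, Poincar\'e yields $\|w_\eps\|_{L^2(U_\eps)}\leq C(d-c)\|A_\eps-R\|_{L^2(U_\eps)}$.

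Next, with a smooth cutoff $\phi$ satisfying $\phi\equiv 1$ for $x_1\leq t_\eps$, $\phi\equiv 0$ for $x_1\geq t_\eps+\gamma_\eps$, $\|\phi'\|_\infty\leq C/\gamma_\eps$, and $\phi'$ vanishing at both endpoints of its support, I define
\begin{equation*}
\tilde A_\eps := \begin{cases} A_\eps, & x_1\leq t_\eps,\\ \nabla(\phi\, v_\eps+(1-\phi)Rx), & x\in(t_\eps,t_\eps+\gamma_\eps)\times(c,d),\\ R, & x_1\geq t_\eps+\gamma_\eps,\end{cases}
\end{equation*}
and $\tilde S_\eps := S_\eps\cap(a,t_\eps)\times(c,d)$. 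The vanishing of $\phi'$ at the endpoints produces matching traces across $\{x_1=t_\eps\}$ and $\{x_1=t_\eps+\gamma_\eps\}$, so $\Curl\tilde A_\eps$ is supported in $\tilde S_\eps$ and (H1) holds; (H2) then follows because any closed rectifiable loop in the complement of $\tilde S_\eps$ may be deformed, through the curl-free region $\{x_1>t_\eps\}$, into a loop in $(a,t_\eps)\times(c,d)\setminus S_\eps$, whose circulation is in $\tau\eps\Z^2$ by (H2) for the original $A_\eps$. The core contribution is automatic: $|B_{\lambda\eps}(\tilde S_\eps)|\leq|B_{\lambda\eps}(S_\eps)|=o(\eps)$.

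The hard part will be the elastic energy on the slab. One has
\begin{equation*}
\int_{U_\eps}|\tilde A_\eps-R|^2\leq 2\|A_\eps-R\|_{L^2(U_\eps)}^2+2\|\phi'\|_\infty^2\|w_\eps\|_{L^2(U_\eps)}^2\leq C\frac{(d-c)^2\delta_\eps^2}{\sigma\,\gamma_\eps},
\end{equation*}
which must be $o(\eps)$ after division by $\eps$. The two-sided constraint $\gamma_\eps\lesssim\eps/F_\eps(A_\eps)$ (defect-free slab) and $\gamma_\eps\gtrsim\delta_\eps^2/\eps$ (energy) has to be met simultaneously; when the raw Poincar\'e bound is not sharp enough, the harmonicity of $w_\eps$ on $U_\eps$ (a consequence, after a preliminary harmonic refinement via Lemma \ref{l.armonica}, of the fact that $\Div A_\eps=\Curl A_\eps=0$ on $U_\eps$) allows a sharpened Poincar\'e estimate via interior bounds for harmonic functions, which closes the balance. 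The $L^2$-convergence $\tilde A_\eps\to R$ finally follows from $A_\eps\to R$ in $L^2$ (granted by the preliminary $L^\infty$-bound of Lemma \ref{l.Linfty} together with $|B_{\lambda\eps}(S_\eps)|\to 0$), the slab energy estimate just proved, and $\tilde A_\eps\equiv R$ to the right of the slab.
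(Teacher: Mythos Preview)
Your approach differs essentially from the paper's, and the difference is not cosmetic: the curl--free ``cutoff of a potential'' construction cannot in general achieve $F_\eps(\tilde A_\eps)\to 0$ under the stated hypotheses.

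Take the concrete test case $A_\eps\equiv R_{\theta_\eps}$ with $\theta_\eps=\eps^{1/4}$, $R=I$, $S_\eps=\emptyset$. Then $\F_\eps/\eps=0$ and $\delta_\eps^2=\|A_\eps-R\|_{L^2}^2\sim\theta_\eps^2=\sqrt{\eps}\to 0$, so the hypothesis holds. On any slab $U_\eps$ of width $2\gamma_\eps$ the potential is $w_\eps(x)=(R_{\theta_\eps}-I)x+\mathrm{const}$, which is \emph{affine} (and in particular harmonic); hence $\|w_\eps\|_{L^\infty(U_\eps)}\sim\theta_\eps$ and no ``sharpened Poincar\'e via harmonicity'' can improve this. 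The slab elastic energy is then
\[
\int_{U_\eps}|\phi'|^2|w_\eps|^2\,dx\;\gtrsim\;\frac{\theta_\eps^2}{\gamma_\eps},
\qquad\text{so}\qquad
F_\eps(\tilde A_\eps)\;\gtrsim\;\frac{\theta_\eps^2}{\gamma_\eps\,\eps}\;=\;\frac{1}{\gamma_\eps\sqrt{\eps}}\;\geq\;\frac{2}{\sigma\sqrt{\eps}}\;\to\;\infty,
\]
since $\gamma_\eps\leq\sigma/2$. Thus the balance you flag (``$\gamma_\eps\gtrsim\delta_\eps^2/\eps$'') genuinely cannot be met, and the harmonicity remark does not rescue it: the Poincar\'e constant on a thin rectangle $(-\gamma,\gamma)\times(c,d)$ is $\sim(d-c)$ even for harmonic (indeed affine) functions.

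The paper's proof is organized around precisely this obstruction. It does \emph{not} attempt a curl--free transition. Instead it selects a single good vertical section $\gamma^-=\{t_0\}\times(c,d)$ (not a slab) with small trace $g:=(A_\eps-R)e_2$, and then invokes the interpolation Lemma~\ref{lemma-locally-elastic-connection-eps}: on a thin strip of width $\ell\sim\rho_\eps=\eps/\sqrt{\eps\vee\omega_\eps}$ one builds a field whose tangential trace matches $g$ on $\gamma^-$ and vanishes on the other side, by stacking $N$ short gradient blocks and letting the horizontal trace \emph{jump} between blocks by vectors in $\tfrac{1}{\ell}\tau\eps\Z^2$. These jumps create \emph{new} quantized defects (finitely many horizontal segments in $\hat S_\eps$), and the resulting energy obeys
\[
\int|\hat A_\eps|^2+|B_{\lambda\eps}(\hat S_\eps)|\;\lesssim\;\eps^{2/3}\Big(\ell\!\int|g|^2\Big)^{1/3}+\frac{\eps^2}{\ell}\;=\;o(\eps).
\]
The missing idea in your plan is exactly this: to reach $R$ with energy $o(\eps)$ one must be willing to \emph{introduce} new dislocations in the transition layer; a purely gradient interpolation is, in general, too expensive by a factor $1/\eps$.
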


The proof of Proposition \ref{th-changing-bc} requires the interpolation result below.

\subsubsection{Elastic interpolation with incompatibilities}
In the following, for every $\ell,L>0$ we consider the rectangle of sides $\ell$ and $L$ $$
Q^0_{\ell,L}:=(0,\ell)\times (0,L).
$$
Moreover, given any vector valued function $g\in L^\infty(\R,\R^2)$, we set
\begin{equation}\label{eq-def-elastic-interp}
u(x_1, x_2)=\left(1-\frac{x_1}{\ell}\right)\int_{0}^{x_2} g(t) \,dt\qquad (x_1,x_2)\in Q^0_{\ell, L}.
\end{equation}
Clearly, $u$ is Lipschitz continuous and
\begin{equation}\label{eq-prop-elastic-interp}
\nabla u(x_1, x_2) e_1 = -\frac1\ell \int_{0}^{x_2} g(t) dt,\qquad \nabla u(x_1, x_2) e_2 = \left(1-\frac{x_1}{\ell}\right) g(x_2).
\end{equation}
In particular, $\nabla u e_1$ is a continuous vector field, which extends continuously to $\partial Q^0_{\ell, L}$. Whereas, $\nabla u e_2$ is the tensor product of a continuous function in $x_1$ and a bounded function in $x_2$. Note that, if $g$ is piecewise continuous, the same holds true for $\nabla u e_2$, thus allowing to define the boundary trace of $\nabla u e_2$ (this will be always the case in the applications below).
Moreover, by direct computation
\begin{equation}\label{eq-lemma-elastic-connection-2}
\begin{split}
\int_{Q^0_{\ell,L}} |\nabla u|^2dx &= \frac1\ell\int_{0}^{L}\left|\int_{0}^{x_2} g(t) dt \right|^2dx_2+ \frac{\ell}3 \int_{0}^{L}|g(t)|^2 dt  \\
&\leq  \left(\frac{L^2}{\ell^2}+\frac13\right)\ell \int_{0}^{L}|g(t)|^2 dt .
\end{split}\end{equation}
The discussion above is the building block of the construction of an elastic interpolation with incompatibilities, which will be used in the proof of Proposition \ref{th-changing-bc}.

\begin{lemma}\label{lemma-locally-elastic-connection-eps}
Let $L,\ell>0$ with $\ell\geq \eps$ and $g\in L^2_{loc}(\R)$.  There exists a field $A\in L^2(Q^0_{\ell,L},\R^{2\times 2})$ 
such that
\begin{equation}\label{eq-lemma-traccia}
		Ae_2= \left(1-\frac{x_1}{\ell}\right) g(x_2)
\end{equation}
and $A e_1$ is piecewise continuous and
satisfies
\begin{equation}
Ae_1=
\begin{cases}
	0 &\hbox{on } (0,\ell)\times\{0\},\\
	a_0 &\hbox{on } (0,\ell)\times\{L\},
\end{cases}
\end{equation}
for some $a_0\in \R^2$. Moreover if $\ell\int_{0}^L|g|^2dt<\eps L$, then
\begin{equation}\label{eq-lemma-locally-elastic-connection-1}
\int_{Q^0_{\ell,L}} |A|^2dx  + |B_{\lambda\eps}(S)|\leq  C\eps^{\frac23} L^{\frac23} \left(\ell\int_0^L|g|^2(t) dt\right)^{\frac13} +C\frac{\eps^2L}{\ell},
\end{equation}
where $C>0$ is a dimensional constant, $S:=\supp\Curl A$ is a union of finitely many horizontal segments of lenght $\ell$,
and $Ae_1$ jumps between across these horizontal segments by a vector in $\frac1\ell\eps\tau \Z^2$.

\end{lemma}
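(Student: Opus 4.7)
My plan is to start from the explicit elastic interpolation $u$ given in \eqref{eq-def-elastic-interp}, which already satisfies $\nabla u\,e_2 = (1-x_1/\ell)\,g(x_2)$, and to perturb only its $e_1$-component by adding a piecewise constant function of $x_2$. Concretely, set
\[
A(x_1,x_2) := \nabla u(x_1,x_2) + v(x_2)\otimes e_1,
\]
where $v\colon (0,L)\to\R^2$ is a step function with values in the lattice $\frac{\eps\tau}{\ell}\Z^2$. Then $Ae_2$ is untouched, and a row-by-row computation gives $\Curl A = -v'$ as a vector-valued distribution, so the incompatibilities are concentrated on the horizontal segments $(0,\ell)\times\{y_i\}$ where $v$ jumps, and the jumps of $Ae_1$ coincide with those of $v$, which lie in $\frac{\eps\tau}{\ell}\Z^2$ by construction.

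I would next partition $(0,L)$ into $N$ equal intervals $(y_{i-1},y_i)$ of length $\delta := L/N$. Writing $h(x_2):= -\tfrac1\ell\int_0^{x_2}g(t)\,dt$ (so that $\nabla u\, e_1 = h$), I set $v\equiv 0$ on the first interval to force $Ae_1=0$ on $(0,\ell)\times\{0\}$, and $v\equiv c_i$ on $(y_{i-1},y_i)$ for $i\geq 2$, where $c_i\in\frac{\eps\tau}{\ell}\Z^2$ is a lattice point nearest to $-\overline{h}_i$, the mean of $-h$ on the interval. The top value $a_0:=c_N+h(L)$ is then automatically constant along $(0,\ell)\times\{L\}$.

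The energy splits into three pieces. First, $\int_{Q^0_{\ell,L}}|Ae_2|^2 = \tfrac{\ell}{3}\int_0^L|g|^2\,dt$; set $G:=\ell\int_0^L|g|^2\,dt$. Second, for $Ae_1 = h+v$, on each subinterval $i\geq 2$ I would use $|c_i+h(x_2)|^2\leq 2|c_i+\overline{h}_i|^2 + 2|\overline{h}_i - h(x_2)|^2$, bounding the first term by $C(\eps\tau/\ell)^2$ (nearest lattice point) and the second by Poincar\'e, $\int_{y_{i-1}}^{y_i}|\overline{h}_i-h|^2\,dx_2\leq C\delta^2\ell^{-2}\int_{y_{i-1}}^{y_i}g^2\,dx_2$; on the first interval the same-shape bound $\int_0^\delta|h|^2\leq C\delta^2\ell^{-2}\int_0^\delta g^2$ follows from $h(0)=0$. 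Summing yields $\ell\int_0^L|h+v|^2\,dx_2\leq C\eps^2L/\ell + C\delta^2 G/\ell^2$. Third, $S$ is the union of at most $N-1$ horizontal segments of length $\ell$, hence $|B_{\lambda\eps}(S)|\leq C\eps\ell N$ (using $\eps\leq\ell$).

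It remains to optimize $N$: minimize $C\delta^2 G/\ell^2 + C\eps\ell L/\delta$ in $\delta=L/N$. The critical $\delta\sim(\eps\ell^3 L/G)^{1/3}$ balances the two terms at $C\eps^{2/3}L^{2/3}G^{1/3}$. When this optimal $\delta$ exceeds $L$, i.e.\ $GL^2\leq\eps\ell^3$, I would just take $N=1$, $v\equiv 0$; then $\ell\int_0^L|h|^2\,dx_2\leq CL^2G/\ell^2$ by Poincar\'e, and this quantity is itself $\leq C\eps^{2/3}L^{2/3}G^{1/3}$ in that regime. Finally, the leftover $G/3$ from $Ae_2$ is absorbed via the smallness hypothesis $G<\eps L$, which gives $G = G^{2/3}G^{1/3}\leq (\eps L)^{2/3}G^{1/3}$; combining all contributions yields \eqref{eq-lemma-locally-elastic-connection-1}. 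The only real difficulty is to correctly set up this joint optimization of the lattice-approximation error against the number of jump lines, and to handle the boundary-forced first interval so that a single Poincar\'e-type estimate controls the whole strip.
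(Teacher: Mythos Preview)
Your proposal is correct and follows essentially the same approach as the paper: both constructions start from the gradient of the interpolant $u$ in \eqref{eq-def-elastic-interp}, partition $(0,L)$ into roughly $L/\delta$ subintervals (the paper sets $\delta=M\ell$ with $M=(L\eps/G)^{1/3}$, which agrees with your optimal $\delta$), add a piecewise constant $\tfrac{\eps\tau}{\ell}\Z^2$-valued correction to $Ae_1$, and balance the resulting elastic contribution against the core cost of the horizontal jump segments. The only cosmetic differences are that the paper resets the $x_2$-integral at the bottom of each subrectangle and then invokes \eqref{eq-lemma-elastic-connection-2} directly, whereas you keep a single global primitive $h$ and use a Poincar\'e inequality on each subinterval; these yield the same bound up to constants.
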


\begin{proof}
Without loss of generality, we can assume that $g\not \equiv 0$, otherwise $A=0$ satisfies the conclusions.
We set
\begin{equation}\label{eq-optrimal-N}
M:=\left(\frac{L\eps}{\ell\int_{0}^L |g(t)|^2dt}\right)^{\frac13}.
\end{equation}
We decompose the rectangle $Q^0_{\ell,L}$ into the $N=\lfloor \frac{L}{M\ell}\rfloor+1$ rectangles
$$
E_j=(0,\ell)\times ((j-1)M\ell, jM\ell)\qquad \hbox{with } j=1,\ldots, N-1;\ \ E_N= (0,\ell)\times ((N-1)M\ell, L).
$$

In each of these rectangles $E_i$ we consider the function $u_i$ provided
in \eqref{eq-def-elastic-interp} and define $A_i=\nabla u_i$. By construction the field $A_i$ has a constant tangential component at the horizontal  sides of $E_i$. Precisely for every $i=1,\ldots, N-1$
$$
A_i(x_1, (i-1)M\ell)e_1=0\quad \hbox{and}\quad \ A_i(x_1, iM\ell)e_1=\bar g_i:= -\frac{1}{\ell} \int_{(i-1)M\ell}^{iM\ell} g(t)\, dt.
$$
We then project on the lattice $\tau\eps\Z^2$ the vector $\ell \bar g_i$ and get
$$
g_i^\eps:= \frac{P_{\tau\eps \Z^2}(\ell \bar g_i)}{\ell},
$$
where $P_{\eps\tau\Z^2}(w)= (i\eps\tau,j\eps\tau)$  for all $w\in \R^2$ with $w\in [i\eps\tau, (i+1)\eps\tau)\times [j\eps\tau,(j+1)\eps\tau)$ and $(i,j)\in \Z^2$. So that in particular $|\ell (g_i^\eps-\bar g_i)|<\sqrt2\tau\eps$. Then we define the field as follows
$$
A(x)= A_1\chi_{E_1}(x) +\sum_{i=2}^N\chi_{E_i}(x) \left[A_i(x) +(g_{i-1}^\eps-\bar g_{i-1})\otimes e_1\right].
$$

Note that the field $Ae_1$ jumps between two neighbouring  rectangles by a vector in $\frac1\ell\eps\tau \Z^2$ and
\begin{equation}\label{e.rotore}
\supp\Curl A = \bigcup_{j\in J} (0,\ell)\times  \{jM\ell\}  ,\qquad \hbox{with } J\subset \{1,\ldots, N-1\}.
\end{equation}
 It remains to compute the energy of $A$.
By direct computation we have that
\begin{equation}
\begin{split}
\int_{Q^0_{\ell,L}} |A|^2dx& \leq 2\sum_{j=1}^N\left(\int_{E_j}|\nabla u_j|^2 dx +  M \ell^2 |g_j^\eps-\bar g_j|^2\right)\\
&\leq2\left(M^2+\frac13\right)\ell\sum_{j=1}^N\int_{(j-1)M\ell}^{jM\ell}|g(t)|^2 dt + 2 \sum_{j=1}^NM \ell^2 |g_j^\eps-\bar g_j|^2\\
&\leq 2\left(M^2+\frac13\right)\ell \int_{0}^{L}|g(t)|^2 dt +C \frac{L}{\ell} \eps^2 .
\end{split}
\end{equation}

Combining this estimate with the information on the support $S$ of the $\Curl A$ in \eqref{e.rotore} and the definition of $M$ we get
\begin{equation}\label{eq-connection-mixed}
\begin{split}
\int_{Q^0_{\ell,L}}|A|^2 dx +|B_{\lambda\eps}(S)|&\leq
C\,\frac{L}{\ell}\eps^2 +
CM^2\ell\,\int_{0}^{L}|g(t)|^2 dt +
(N-1)\big(\lambda\eps \ell+\pi\lambda^2\eps^2\big)\\
&\leq C
\frac{L}{\ell}\eps^2 +
CM^2\ell\,\int_{0}^{L}|g(t)|^2 dt +
C \frac{L}{M}\eps +C \frac{L\eps^2}{M\ell}\\
&\leq C \frac{L}{\ell}\eps^2+
C\,\eps^{\frac23}L^{\frac23}\left(\ell\,\int_{0}^{L}|g(t)|^2 dt\right)^{\frac13},
\end{split}
\end{equation}
where we used that $\eps\leq \ell$ to bound the last term.
\end{proof}

\subsubsection{Proof of Proposition \ref{th-changing-bc}}

Without loss of generality we prove the result in the case $(a,b)\times(c,d)=(0,1)^2$.

Fix $\sigma>0$ and denote  $T^+_\sigma=(\sigma,2\sigma)\times (0,1)$. We then set
\begin{equation}\label{eq-zero energy}
	\omega_\eps:=\frac{1}{\eps}	\int_{T^+_\sigma\setminus B_{\lambda\eps}(S_\eps)}\dist^2(A_\eps,SO(2))dx +\frac{1}{\eps}	|B_{\lambda\eps}(S_\eps)|+\int_{T^+_\sigma\setminus B_{\lambda\eps}(S_\eps)} |A_\eps-R|^2 \,dx,
\end{equation}
and using \eqref{eq-th-changing-bc-0}, we have that $\omega_\eps\to 0$ as $\eps\to 0^+$.
Let  $\rho_\eps:=\eps M_\eps$, with $M_\eps=1/\sqrt{\eps\vee\omega_\eps}$, and  $N_\eps:=\lfloor\sigma/\rho_\eps\rfloor$. Note that  $\rho_\eps\to 0$ and $\omega_\eps M_\eps\to 0$.

We consider the $N_\eps$ rectangles $T_i=(\sigma+i\rho_\eps, \sigma+(i+1)\rho_\eps)\times (0,1)\subset T^+_\sigma$, with $i=0,\ldots , N_\eps-1$.
Since from \eqref{eq-zero energy} we have that
$$
\sum_{i=0}^{N_\eps-1} \frac1\eps \F_\eps(A_\eps, S_\eps, T_i)+\int_{T_i\setminus B_{\lambda\eps}(S_\eps)}|A_\eps-R|^2\, dx\leq \omega_\eps,
$$
we can find an index $i_0\in \{0,\ldots, N_\eps-1\}$ such that
\begin{equation}\label{eq-scelta-striscia}
	\frac1\eps \F_\eps(A_\eps, S_\eps, T_{i_0})+\int_{T_{i_0}\setminus B_{\lambda\eps}(S_\eps)}|A_\eps-R|^2\, dx\leq \frac{1}{N_\eps}\omega_\eps=\frac1\sigma \omega_\eps \rho_\eps.
\end{equation}
In particular, from \eqref{eq-scelta-striscia}, we infer that
\[
|B_{\lambda\eps}(S_{\eps})\cap T_{i_0}|\leq C\eps\, \omega_\eps \rho_\eps,
\]
and a simple covering argument implies that for $\eps$ small enough
\[
|B_{\lambda\eps}(p(S_{\eps}))\cap (\sigma+i_0\rho_\eps,\sigma+ (i_0+1)\rho_\eps)|\leq C \omega_\eps \rho_\eps \leq \frac{\rho_\eps}{4},
\]
where $p(x_1, x_2)=x_1$ is the projection on the first component and the measure of the tubular neighbourhood of the projection is the one dimensional Lebesgue measure.
Therefore, we can choose a vertical section $\gamma^-=\{t_0\}\times (0,1)$ for some $t_0\in (\sigma+i_0\rho_\eps,\sigma+i_0\rho_\eps +\rho_\eps/2)$ such that
\begin{equation}\label{eq-sezione-stima-2}
	\gamma^-\cap B_{\lambda\eps}(S_{\eps}) = \emptyset,
\end{equation}
and
\begin{equation}\label{eq-sezione-stima}
	\rho_\eps\int_{\gamma^-} |A_\eps(x)-R|^2 d\mathcal H^1(x)\le 2\int_{T_{i_0}\setminus B_{\lambda\eps}(S_\eps)} |A_\eps(x)-R|^2 dx \leq C \omega_\eps\rho_\eps= C\eps \omega_\eps M_\eps.
\end{equation}

Moreover we can choose $\gamma^-$ so that the tangential component  of $A_\eps$ on $\gamma^-$ is well defined in $L^2(\gamma^-)$.
We apply then Lemma \ref{lemma-locally-elastic-connection-eps} in the rectangle
$\hat T:=(t_0, \sigma+(i_0+1)\rho_\eps)\times (0,1)$ with
\[
g(t) = \big(A_\eps(t_0,t) - R\big)e_2,
\]
getting a  field $\hat A_\eps: [t_0, \sigma+(i_0+1)\rho_\eps]\times (0,1)  \to \R^{2\times 2}$, with $\hat S_\eps:=\supp\curl \hat A_\eps$, satisfying
\begin{equation}\label{eq-lemma-locally-elastic-connection-1-1}
\begin{split}
\int_{\hat T} |\hat A_\eps|^2dx  + |B_{\lambda\eps}(\hat S_\eps)|&\leq
C\eps^{\frac23} \left(\rho_\eps\int_{0}^1|g|^2 dt\right)^{\frac13} +C\frac{\eps^2}{\rho_\eps}\\
&\leq C\eps^{\frac23}  \left(\eps \omega_\eps M_\eps\right)^{\frac13} +\frac{\eps}{M_\eps}= C\eps((\omega_\eps M_\eps)^{\frac13} +\frac{1}{M_\eps})\leq C\eps(\omega_\eps\vee \eps)^{\frac16}
\end{split}
\end{equation}
where we used  \eqref{eq-sezione-stima} and the definition of $\rho_\eps$ and $M_\eps$, and such that
the field
\begin{equation}
\tilde A_\eps(x)=
\begin{cases}
A_\eps(x)\qquad & \hbox{if } x\in (0,t_0)\times(0,1)\\
\hat A_\eps(x) +R & \hbox{if } x\in [t_0, \sigma+(i_0+1)\rho_\eps)\times (0,1)\\
R & \hbox{if } x\in [\sigma+(i_0+1)\rho_\eps,1)\times (0,1).
\end{cases}
\end{equation}
does not have curl along $\gamma^-$ and $\{\sigma+(i_0+1)\rho_\eps\}\times (0,1)$, thanks to \eqref{eq-lemma-traccia}.
Setting $\tilde S_\eps:= S_\eps\cap [(0,t_0)\times(0,1)]\cup\hat S_\eps$, and using Lemma \ref{lemma-locally-elastic-connection-eps} and \eqref{eq-sezione-stima-2} we also infer that the field $\tilde A_\eps$ is admissible and satisfies
\begin{equation*}
\frac1\eps\F_\eps(\tilde A_\eps,\tilde S_\eps; (0,1)^2)\leq \frac1\eps\F_\eps(A_\eps,S_\eps; (0,t_0)\times(0,1)) +\frac{1}{\eps}\int_{\hat T} |\hat A_\eps|^2dx  +\frac1\eps |B_{\lambda\eps}(\hat S_\eps)| \leq C(\omega_\eps\vee \eps)^{\frac16},	
\end{equation*}
which concludes the proof.

\subsection{Proof of Theorem \ref{th-main-cell-problem}}

	The proof now is a direct application of Proposition \ref{th-changing-bc}. Indeed, for a fixed $\delta>0$, from Propositions \ref{prop-cell-formula-properties}
and the compactness result (Theorem \ref{t.compactness}) we  obtain an optimal sequence $(A_\eps,S_\eps)\in \mathcal{C}_\eps(Q_\frac12)$ such that
\begin{equation}\label{eq-th-conclusione}
A_\eps\chi_{Q_1\setminus B_{\lambda\eps}(S_\eps)}\to I_{R^-,R^+} \quad \textup{in $L^2(Q_1;\R^{2\times 2})$}\quad\textup{and}\quad\lim_{\eps\to 0^+} \frac1\eps\F_\eps( A_\eps,S_\eps;Q_{\delta,\frac12})
=\psi_0(R^-,R^+),
\end{equation}
with $Q_{\delta,\frac12} = (-\delta, \delta)\times (-1/2,1/2)$ and
	\begin{equation}
		\frac1\eps\F_\eps(A_\eps, S_\eps,Q_\frac12\setminus Q_{\delta,\frac12})\to 0.
	\end{equation}
Therefore we can apply Proposition \ref{th-changing-bc} to the rectangles $(\delta, 1/2)\times (-1/2,1/2)$ and $(-1/2,-\delta]\times (-1/2,1/2)$ (with the constant rotations $R^-$ and $R^+$ respectively), and obtain a new admissible field $\tilde A_\eps$ for which satisfies \eqref{eq-th-conclusione} and
$$
\tilde A_\eps = I_{R^-,R^+} \quad \hbox{in } Q_\frac12\setminus Q_{\delta,\frac12}.
$$
Extending $\tilde A_\eps$ with $I_{R^-,R^+}$ outside $Q_\frac12$ provides
\eqref{eq-main-prop-cell-pb} and conclude the proof.

	\section{$\Gamma$-convergence}\label{gamma coonvergence}

We are now ready to prove the main result of the paper, namely the $\Gamma$-convergence result of the functionals $F_\eps$.
For every $R^-,R^+\in SO(2)$, $n\in \mathbb{S}^1$, let $R_n\in SO(2)$ be the rotation such that $R_ne_1=n$ we define
\begin{equation}
	\label{def-Phi}
	\Phi(R^-,R^+,n):= \psi_0(R^-R_n,R^+R_n)=\psi_\infty(R^-R_n,R^+R_n).
\end{equation}

\begin{remark}\label{rem-properties-psi0}
	From the invariance  properties of the energy (see Remark \ref{rem-invariance-rotation0} and \ref{rem-invariance-rotation}) and the definitions of $\psi_\infty$ we deduce that, for all $R^-,R^+\in SO(2)$ and $n\in\mathbb{S}^1$, the energy density $\Phi$ satisfies
	\begin{equation}\label{eq-cell-problem-eps-1}
		\begin{split}
			\Phi(R^-,R^+, n)=\lim_{L\to +\infty}\frac{1}{2L}\inf\Big\{F_1(A,Q_L^n)\,:\ &A\in L^1_{loc}(\R^2;\R^{2\times 2})\,,\ \\
			&A = I_{R^-,R^+,n} \hbox{ in } \R^2\setminus Q^n_{L'}, \hbox{ for some } L'<L\Big\},
		\end{split}
	\end{equation}
	where
	\begin{gather*}
		I_{R^-,R^+,n}(x):=R^-\chi_{\{x\cdot n<0\}}(x) +R^+\chi_{\{x\cdot n\geq0\}}(x) ,
	\end{gather*}
and the set $Q_L^n$ is the square  centered at $0$,  with a side parallel to $n$ and length $2L$.
\end{remark}

We now prove the $\Gamma$-convergence result.

\begin{theorem}\label{t.main1}
The functionals $F_\eps$ $\Gamma$-converge to
\begin{equation}\label{eq-limit}
F_0(A)=\begin{cases}{\displaystyle
\int_{J_A}\Phi(A^-,A^+,\nu_A) \,d\Ha^{1}},\qquad & \hbox{ if $A$ is a microtation},\\
+\infty & otherwise,
\end{cases}
\end{equation}
with respect to the convergence of $A_\eps$ to $A$ given by
\[
A_\eps \chi_{\Omega\setminus B_{\lambda\eps}(S_{A_\eps})}\to A \qquad \textup{in } L^1(\Omega;\R^{2\times 2}).
\]
The energy density $\Phi:SO(2)\times SO(2)\times \mathbb{S}^1\to [0,+\infty)$ satisfies the Read-Shockley bounds
\begin{equation}\label{eq-log-bounds}
C_1|R^--R^+| (|\log|R^--R^+||+1)\leq \Phi(R^-,R^+,n)\leq C_2|R^--R^+| (|\log|R^--R^+||+1)
\end{equation}
 for every $R^-,R^+\in SO(2)$ and $ n\in \mathbb{S}^1$, with $C_1,C_2>0$ dimensional constants.
\end{theorem}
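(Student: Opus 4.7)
My plan is to deduce the $\Gamma$-convergence from the cell-problem identification of Corollary \ref{cor-main-cell}, the compactness Theorem \ref{t.compactness}, the construction of Proposition \ref{prop-upper-bound} and the lower bound of Proposition \ref{prop-lower-bound}. For the bounds \eqref{eq-log-bounds}, I would first test the thermodynamic-limit formula \eqref{eq-cell-problem-eps-1} with the fields produced by Proposition \ref{prop-upper-bound}, using $|R^-R_n-R^+R_n|=|R^--R^+|$ and the identity $\Phi(R^-,R^+,n)=\psi_\infty(R^-R_n,R^+R_n)$ from \eqref{def-Phi}, to obtain the Read--Shockley upper bound. For the lower bound, I apply Theorem \ref{t.compactness} to an optimal sequence realizing $\psi_0(R^-R_n,R^+R_n)$ (granted by Theorem \ref{th-main-cell-problem}): the $L^1$-limit is forced to be the micro-rotation $I_{R^-R_n,R^+R_n}$, whose jump on $Q_{1/2}$ has $\Ha^1$-measure $1$ and modulus $|R^--R^+|$, and Proposition \ref{prop-lower-bound} then yields $C_1|R^--R^+|\,\big|\log|R^--R^+|\big|\leq \Phi(R^-,R^+,n)$.

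For the $\Gamma$-liminf inequality, I take any sequence $A_\eps\chi_{\Omega\setminus B_{\lambda\eps}(S_{A_\eps})}\to A$ in $L^1$ with $\liminf F_\eps(A_\eps)<+\infty$ (otherwise the inequality is trivial), so that $A$ is a micro-rotation by Theorem \ref{t.compactness}. Up to a subsequence, let $\mu$ be the weak$^*$ limit of
\[
\mu_\eps := \frac{1}{\eps}\Big(\dist^2(A_\eps,SO(2))\,\chi_{\Omega\setminus B_{\lambda\eps}(S_{A_\eps})}+\chi_{B_{\lambda\eps}(S_{A_\eps})}\Big)\,\mathcal{L}^2\LL\Omega,
\]
so that $\mu(\Omega)\leq \liminf F_\eps(A_\eps)$; it then suffices to prove the density estimate
\[
\frac{d\mu}{d(\Ha^1\LL J_A)}(x_0)\geq \Phi(A^-(x_0),A^+(x_0),\nu_A(x_0))\qquad\text{for } \Ha^1\text{-a.e.\ } x_0\in J_A.
\]
At such an $x_0$ I would perform a diagonal blow-up: pick $\eps_n,r_n\to 0$ with $\eps_n/r_n\to 0$ such that the rescaled fields $\tilde A_n(y):=A_{\eps_n}(x_0+r_n y)$ lie in $\mathcal{C}_{\eps_n/r_n}(Q_L^{\nu_A(x_0)})$ for every fixed $L$, converge in $L^1_{\loc}$ to $I_{A^-(x_0),A^+(x_0),\nu_A(x_0)}$, and satisfy the scaling identity $F_{\eps_n}(A_{\eps_n},Q_{r_n L}^{\nu_A(x_0)}(x_0))=r_n F_{\eps_n/r_n}(\tilde A_n,Q_L^{\nu_A(x_0)})$ inherited from \eqref{eq-scaling-property-3}; the thermodynamic-limit formula \eqref{eq-cell-problem-eps-1} then yields the desired bound upon letting first $n\to\infty$ and then $L\to\infty$.

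For the $\Gamma$-limsup inequality, I use the density of polyhedral micro-rotations recalled after Definition \ref{def-micro-rotation}, together with a diagonal argument, to reduce to the case when $A$ is piecewise constant on a finite polyhedral Caccioppoli partition. On each interior segment $\Sigma\subset J_A$ with lateral values $R^\pm$ and normal $\nu$, I tile a thin neighborhood of $\Sigma$ with squares of side $2\delta$ oriented by $\nu$, avoiding small balls of radius $\delta$ around the finitely many vertices of the partition, and I place on each tile a rescaled version of the competitor provided by Theorem \ref{th-main-cell-problem} for $\psi_0(R^-R_\nu,R^+R_\nu)$; by the scaling \eqref{eq-scaling-property-3}, the energy on each tile tends to $2\delta\,\Phi(R^-,R^+,\nu)$, so summing over tiles and segments recovers $\int_{J_A}\Phi(A^-,A^+,\nu_A)\,d\Ha^1$. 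The vertex balls are filled with the local competitor from Proposition \ref{prop-upper-bound}, whose contribution is bounded by $C\delta|\log\delta|\to 0$ via \eqref{eq-log-bounds}; the circulation condition (H2) is preserved because each local competitor agrees with the constant target rotation outside a compactly contained subset of its domain, so no new incompatibilities appear at tile interfaces or at the boundary of the vertex balls.

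The hardest step will be the diagonal blow-up in the $\Gamma$-liminf, which requires the simultaneous choice of $(\eps_n, r_n)$ so that the density quotient $\mu(Q_{r_n L}^{\nu_A(x_0)}(x_0))/(2 r_n L)$ converges to the Radon--Nikodym derivative at $x_0$ for every fixed $L$, the energy near $\partial Q_{r_n L}^{\nu_A(x_0)}(x_0)$ is negligible, and $\tilde A_n$ can be turned, after a small boundary modification via the clearing-out Proposition \ref{th-changing-bc}, into an admissible competitor for the cell-problem defining $\psi_\infty(A^-(x_0)R_{\nu_A(x_0)},A^+(x_0)R_{\nu_A(x_0)})$.
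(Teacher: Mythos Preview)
Your approach is essentially the paper's, with one unnecessary complication in the $\Gamma$-liminf. The paper, like you, performs a blow-up at $\Ha^1$-a.e.\ $x_0\in J_A$ along a diagonal sequence $\rho_\eps\to 0$ and rescales to obtain $\tilde A_\eps(y)=A_\eps(2\rho_\eps y+x_0)$ converging in $L^1(Q_{1/2})$ to $I_{A^-(x_0),A^+(x_0)}$. But at this point the paper invokes the definition of $\psi_0$ directly: by \eqref{eq-energy-density},
\[
\liminf_{\eps\to 0} F_{\eps/2\rho_\eps}(\tilde A_\eps,Q_{1/2})\;\geq\;\psi_0(A^-(x_0),A^+(x_0))\;=\;\Phi(A^-(x_0),A^+(x_0),e_1),
\]
with \emph{no boundary modification} needed, since the infimum in $\psi_0$ is over sequences merely converging in $L^1$. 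Your plan to pass through $\psi_\infty$ via \eqref{eq-cell-problem-eps-1} forces you to impose the exact boundary condition $A=I_{R^-,R^+}$ outside a compact set, hence the clearing-out step you flag as ``hardest''. That step would in fact require showing that the blown-up energy outside a strip vanishes, which is \emph{not} guaranteed for a generic (non-optimal) sequence and would need a further measure-concentration argument. All of this is bypassed by using $\psi_0$ and then the equality $\psi_0=\psi_\infty$ of Corollary \ref{cor-main-cell}.

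For the $\Gamma$-limsup and the Read--Shockley bounds your plan matches the paper. One simplification: the paper does not use Proposition \ref{prop-upper-bound} at the vertices; it simply places the short residual pieces of $J_A$ (of total length $O(\delta_\eps)$) into $S$, contributing $O(\delta_\eps)$ to $F_\eps$, which already vanishes.
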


\begin{remark}\label{rem-gamma-convergence}
We recall that the $\Gamma$-convergence consists in the following two statements:
\begin{itemize}
\item[i)] \emph{$\Gamma$-$\liminf$ inequality}: For every
$A_\eps\chi_{\Omega\setminus B_{\lambda\eps}(S_{A_\eps})} \to A$ micro-rotation in $L^1(\Omega;\R^{2\times 2})$  we have
\begin{equation}\label{eq-lower}
F_0(A)\leq \liminf_{\eps\to 0}F_\eps(A_\eps);
\end{equation}
\item[ii)]   \emph{$\Gamma$-$\limsup$ inequality}: For every micro-rotation $A$ there exists a sequence $A_\eps$ such that $A_\eps\chi_{\Omega\setminus B_{\lambda\eps}(S_{A_\eps})}\to A$  in $L^1(\Omega;\R^{2\times2})$ and
\begin{equation}\label{eq-upper}
F_0(A)\geq \limsup_{\eps\to 0}F_\eps(A_\eps).
\end{equation}
\end{itemize}
\end{remark}

\begin{proof}
We start showing the
\textit{$\Gamma$-liminf inequality}: Let $A_\eps\in L^1(\Omega;\R^{2\times 2})$ and $A$ be a micro-rotation, with $A_\eps\chi_{\Omega\setminus B_{\lambda\eps}(S_{A_\eps})} \to A$.
Without loss of generality we can assume that $F_\eps(A_\eps,\Omega)$ is bounded and consider the Radon measures
\begin{equation}
			\label{eq-lower-measure}
			\mu_\eps(E):= F_\eps (A_\eps, E)   \qquad E\subset \Omega,
		\end{equation}
which we may assume converging weakly to some Radon measure $\mu$.
Let
\[
\sigma:=|A^+-A^-| \mathcal{H}^1\LL J_A, \qquad \mu=:\mu^a+\mu^s=:f\sigma+\mu^s,
\]
where $\mu^a$ and $\mu^s$ are the absolutely continuous part and the singular part of $\mu$ with respect to $\sigma$, respectively.
Then,  for $\sigma$-almost every $x_0$ in $\Omega$, and therefore for $\Ha^1$-almost every $x_0$ in $J_A$, we have that
\begin{equation}
\begin{split}\label{eq-lower-bound-blow-up1}
				f(x_0)&=\lim_{\rho\to 0} \frac{\mu(Q_\rho(x_0))}{\sigma(Q_{\rho}(x_0))}=
				\lim_{\rho\to 0}\frac{2\rho}{\sigma(Q_\rho(x_0))}\frac{\mu(Q_\rho(x_0))}{2\rho}\\
				&
				=\frac{1}{|A^-(x_0)-A^+(x_0)|}\lim_{\rho\to 0}\frac{\mu(Q_\rho(x_0))}{2\rho}.
			\end{split}
		\end{equation}
In order to prove the $\Gamma$-$\liminf$ inequality is enough to show
		\begin{equation}
			\label{eq-lower-derive}
			\lim_{\rho\to 0} \frac{\mu(Q_\rho(x_0))}{2\rho}\geq \Phi(A^-(x_0),A^+(x_0),\nu_A(x_0))\quad \hbox{for $\Ha^1$-a.e. } x_0\in J_A.
		\end{equation}
Indeed, from \eqref{eq-lower-derive} combined with \eqref{eq-lower-bound-blow-up1}, we infer
$$
		f(x_0)\geq \frac{\Phi(A^-(x_0),A^+(x_0),\nu_A(x_0))}{|A^+(x_0)-A^-(x_0)|} \quad \hbox{for $\Ha^1$-a.e. } x_0\in J_A,
$$
and, as a  consequence,
		\begin{equation*}
			\lim_{\eps\to 0} F_\eps(A_\eps)\geq \mu(\Omega)\geq\mu^a(\Omega)= \int_\Omega f d\sigma\geq \int_{J_A}\Phi(A^-(x),A^+(x),\nu_A(x))d \mathcal{H}^1(x),
		\end{equation*}
		which is the required lower bound.

\medskip

It remains to show \eqref{eq-lower-derive} for all $x_0$ such that \eqref{eq-lower-bound-blow-up1} holds true.

Thanks to the invariance under right rotation of the energy it is not restrictive to assume $\nu_A(x_0)=e_1$.

In order to simplify the notation we  write $A^+$ and $A^-$ instead of $A^+(x_0)$ and $A^-(x_0)$. We can assume that $x_0$ also satisfies
		\begin{equation}\label{eq-blow-up-0}
			\lim_{\rho\to 0} \mean_{Q_\rho(x_0)}|A(x)-I_{A^+,A^-}(x-x_0)|dx=0.
		\end{equation}
In other words, \eqref{eq-blow-up-0} accounts for
\[
A^\rho(y):=A(2\rho y+x_0)\to I_{A^+,A^-}\qquad L^1(Q_\frac12;\R^{2\times 2})\quad\textup{as $\rho\to 0$}.
\]
Since  $\mu(Q_\rho(x_0))=\lim_{\eps\to 0} \mu_\eps(Q_\rho(x_0))$ for a.e. $\rho>0$ and $A_\eps\chi_{\Omega\setminus B_{\lambda\eps}(S_{A_\eps})}\to A$, we have that
		\begin{equation}
			\label{eq-blow-up-1}
			\liminf_{\rho\to 0}\lim_{\eps\to 0}\left(\mean_{Q_{\rho}(x_0)}|A_\eps\chi_{\Omega\setminus B_{\lambda\eps}(S_{A_\eps})}-A|\,dx +  \frac{\left|\mu_\eps(Q_\rho(x_0))- \mu(Q_\rho(x_0))\right|}{2\rho} \right)= 0.
		\end{equation}
		Therefore, we can construct a diagonal sequence $\rho_\eps$ such that
		\begin{equation}
			\label{eq-blow-up-1bis}
			\lim_{\eps\to 0}\left(\mean_{Q_{\rho_\eps}(x_0)}|A_\eps\chi_{\Omega\setminus B_{\lambda\eps}(S_{A_\eps})}-A|\,dx +  \frac{\left|\mu_\eps(Q_{\rho_\eps}(x_0))- \mu(Q_{\rho_\eps}(x_0))\right|}{2\rho_\eps}\right)= 0.
		\end{equation}
Observe that from \eqref{eq-blow-up-1bis} and \eqref{eq-blow-up-0} we get
		\begin{equation}
			\label{eq-blow-up-2}
			\begin{split}
				\lim_{\eps\to 0}\mean_{Q_{\rho_\eps}(x_0)}|A_\eps\chi_{\Omega\setminus B_{\lambda\eps}(S_{A_\eps})}(x)-I_{A^+,A^-}(x-x_0)|\,dx=0.
			\end{split}
		\end{equation}
This implies that  $\tilde A_\eps(y)=A_\eps\chi_{\Omega\setminus B_{\lambda\eps}(S_{A_\eps})}(2\rho_\eps y+x_0)\to I_{A^+,A^-}$ in $L^1$. Moreover, from \eqref{eq-blow-up-1bis} and \eqref{eq-lower-bound-blow-up1} we infer
\begin{align*}
			\lim_{\rho\to 0}\frac{\mu(Q_\rho(x_0))}{2\rho}&=\lim_{\eps\to 0}\frac{\mu_\eps(Q_{\rho_\eps}(x_0))}{2\rho_\eps}=\lim_{\eps\to 0}\frac{F_\eps(A_\eps,Q_{\rho_\eps}(x_0))}{2\rho_\eps}\\
			&=\lim_{\eps\to 0} F_{\eps/2\rho_\eps}(\tilde A_\eps, Q_\frac12)\geq \psi_0(A^-,A^+)=\Phi(A^-,A^+,e_1),
\end{align*}
which is \eqref{eq-lower-derive}, recalling that we assumed $\nu_A(x_0)=e_1$. So that the proof of the lower bound is complete.

\medskip

We now show the \emph{$\Gamma$-limsup inequality}: Fix a micro-rotation $A$.
By a density argument (see \cite{Bellettini-Chambolle-Goldman}) we can assume that $A$ is piecewise constant on a finite partition of $\Omega$ with polygons. The construction of the recovery sequence is then an immediate consequence of the characterization of the line tension by means of the asymptotic formula.
Let
		$$
		A=\sum_{i=1}^N R_i\chi_{E_i},
		$$
with $R_i\in SO(2)$ and $\{E_i\}$ is a polyhedral partition of $\Omega$. The boundary of the partition is made of segments $\Gamma_{i,j}=\overline E_i\cap\overline E_j\neq\emptyset$, with $i,j\in\{1,\ldots, N\}$, which have normal $\nu_{ij}$, and that can meet in a finite set of points $\{x_1,\ldots, x_L\}$.
With this notation we have
		$$
		F_0(A)=\sum_{i,j}\mathcal{H}^1(\Gamma_{ij})\Phi(R_i,R_j,\nu_{i,j}).
		$$
		\begin{figure}[htb]
			\def\svgwidth{150pt}	\fontsize{10}{4}{
			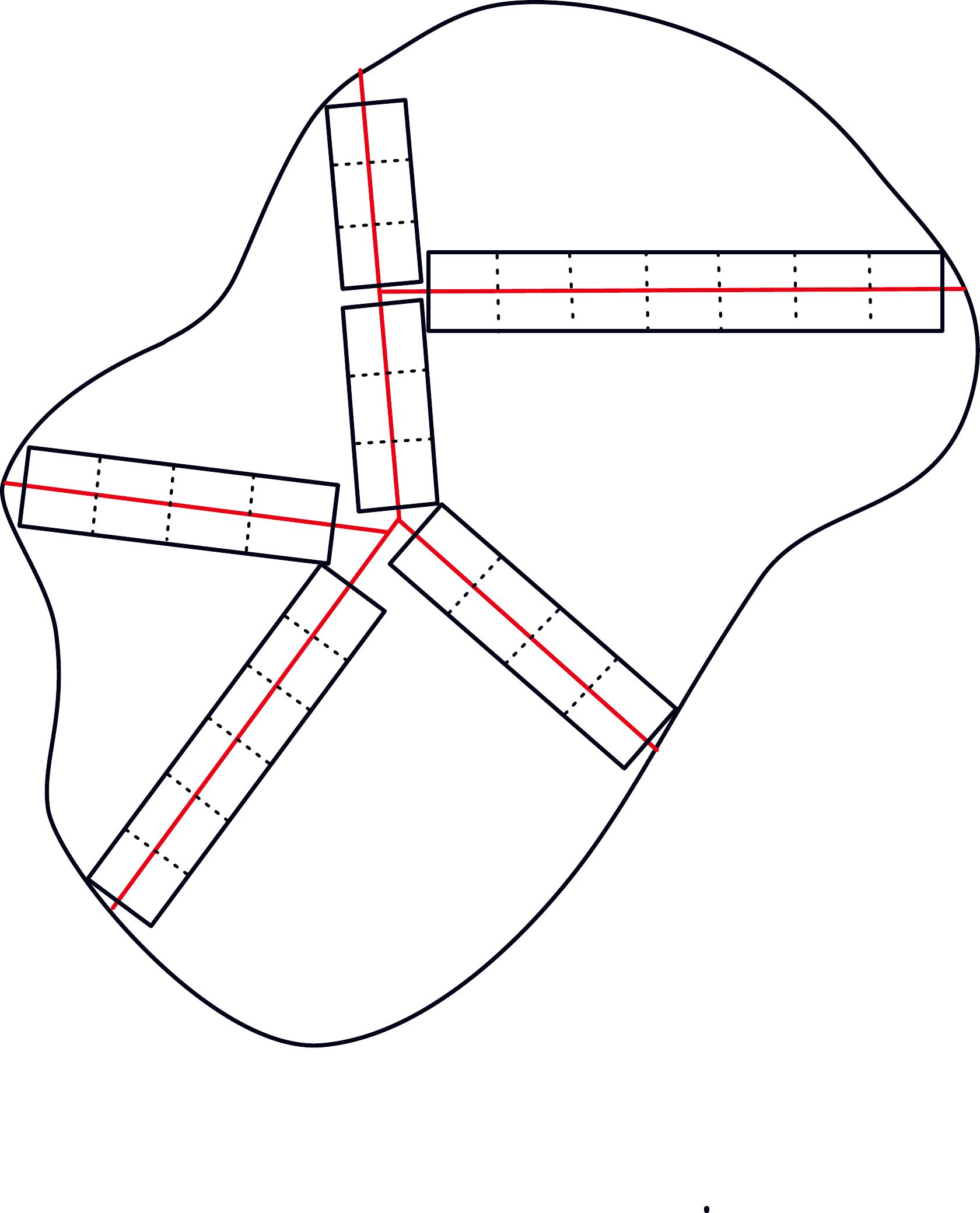
			\caption[]{Construction of the field $A_\eps$ for the $\Gamma-\limsup$}
			\label{figura5}}
		\end{figure}
Then, we consider a sequence $\delta_\eps$, with $\eps<\!\!<\delta_\eps<\!\!<1$, and we  construct rectangles $T^\eps_{ij}$ with one side of length $\delta_\eps$ orthogonal to the segment $\Gamma_{ij}$, symmetric with respect to $\Gamma_{ij}$ (as in Figure \ref{figura5}) and height such that the rectangles $T^\eps_{ij}$ are pairwise disjoint and
\[
\bigcup_{ij}\Gamma_{ij}\subset \bigcup_{i,j} T^\eps_{ij}\cup \bigcup_{i=1}^{L} B_{2\delta_\eps}(x_i).
\]
Now  from \eqref{eq-cell-problem-eps-1} for each pair $i,j$ we find a field $A_\eps^{i,j}$ such that
\begin{equation}\label{eq-ciccio}
		\lim_{\eps\to 0} \frac{\eps}{2\delta_\eps}F_1(A_\eps^{i,j}, Q_{\frac{\delta_\eps}\eps}^{\nu_{ij}})=\Phi(R_i,R_j,\nu_{ij}),
\end{equation}
and
		$$
A_\eps^{i,j}=I_{R_i,R_j,\nu_{ij}} \quad \hbox{in  } \R^2\setminus Q_{L^{i,j}_\eps}^{\nu_{ij}}\ \hbox{for } L^{i,j}_\eps<\frac{\delta_\eps}\eps.
$$
Set
\begin{equation}\label{eq-AL-estesa1}
	\hat A_\eps^{i,j}(x+2k\delta_\eps\nu_{ij}^\perp)=A_\eps^{i,j}\Big(\frac{x}{\eps}\Big)\qquad \forall x\in
	\big\{x\in \R^2:\ |x\cdot\nu_{ij}^\perp|\leq \delta_\eps\big\} , \quad \forall\:\, k\in\Z.
\end{equation}
We then  define the field
$$
A_\eps(x)=\begin{cases}
\hat A_\eps^{i,j}(x-p_{ij}) \quad &\hbox{if } x\in T^\eps_{ij}\\
A(x) & \hbox{otherwise in } \Omega,
\end{cases}
$$
where $p_{ij}\in \Gamma_{ij}$ are given points.
Note that  $T^\eps_{ij}-p_{ij}\subset \{x\in \R^2:\ |x\cdot\nu_{ij}|\leq \delta_\eps\}$.
Since each $T^\eps_{ij}$ can be covered with $N_{ij}^\eps$ disjoint copies of cubes of the type $Q_{\delta_\eps}^{\nu_{ij}}$ with
\begin{equation}\label{eq-franco}
2N_{ij}^\eps\delta_\eps\to \mathcal{H}^1(\Gamma_{ij}),
\end{equation}
when $\eps\to 0$ (and hence $\delta_\eps\to 0$). We have that $A_\eps\to A$ and
		\begin{equation}
			F_\eps(A_\eps,\Omega)\leq \sum_{i,j} N_{ij}^\eps F_\eps(\hat A_\eps^{i,j}, Q_{\delta_\eps}^{\nu_{ij}})+ C M \delta_\eps\leq \sum_{i,j} N_{ij}^\eps \eps F_1(A_\eps^{i,j}, Q_{\frac{\delta_\eps}\eps}^{\nu_{ij}})		+ C M \delta_\eps,
		\end{equation}
where $M$ is the number of segments $\Gamma_{ij}$. Taking the limit as $\eps\to 0$, and using \eqref{eq-ciccio} e \eqref{eq-franco} we conclude \eqref{eq-upper}.

\medskip
It remains to show the properties of the energy density $\Phi$.
The bound \eqref{eq-log-bounds} is an immediate consequence of Propositions \ref{prop-lower-bound} (the lower bound) and Proposition \ref{prop-upper-bound} (the upper bound), together with the fact that $\Phi(R^-,R^+,n)=\psi_\infty(R^-R_n,R^+R_n)$, for all $R^-,R^+\in SO(2)$, $n\in\mathbb{S}^1$, and $R_ne_1=n$. In particular, \eqref{eq-log-bounds} implies that $\Phi(R^-,R^+,n)$ tends to zero with infinite slope when $|R^+-R^-|$ tends to zero.
Finally, since $F_0$ is a $\Gamma$-limit, it is lower semicontinuous with respect to the $L^1$ convergence. This fact easily implies that $\Phi$ is lower semicontinuous and subadditive in the first two entries.
\end{proof}

\begin{remark}
Note that the recovery sequence constructed in the proof of the $\Gamma$-$\limsup$ inequality converges $L^1(\Omega;\R^{2\times 2})$ to the limiting configuration $A$, not only in the topology of the statement (i.e., the convergence of $A_\eps \chi_{\Omega\setminus S_{A_\eps}}\to A$).
\end{remark}

	\appendix

	\section{Coverings}\label{s:covering}

	The following elementary covering argument is used in several steps and we report it here for readers' convenience (see \cite{LL2}).

	\begin{lemma}\label{preliminare}
		Let $S\subset\R^n$ be a compact set, let $\sigma>0$ and $h\in \R$, $h>1$. Then there exists a constant $C(h,n)>0$ (depending only on $h$ and $n$) such that
		$$
		|B_{h\sigma}(S)|\leq C(h,n) |B_\sigma(S)|\,.
		$$
	\end{lemma}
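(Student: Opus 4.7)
The plan is to prove this via a Vitali-type covering argument: extract a maximal collection of disjoint small balls centered in $S$ and use volume comparison.

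First I would pick a maximal collection of points $\{x_i\}_{i\in I}\subset S$ such that the balls $\{B_{\sigma/2}(x_i)\}_{i\in I}$ are pairwise disjoint (existence by Zorn, or a greedy construction; the collection is at most countable by separability of $\R^n$, and one may even take it finite by the compactness of $S$). By maximality, for every $y\in S$ there must exist some $i\in I$ with $|y-x_i|<\sigma$, for otherwise $y$ could be added to the family without spoiling disjointness. Hence $S\subset\bigcup_{i\in I} B_\sigma(x_i)$, and by the triangle inequality the same centers cover the thickened set:
\[
B_{h\sigma}(S)\subset\bigcup_{i\in I}B_{(h+1)\sigma}(x_i).
\]

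Next I would carry out the volume comparison. On each ball, $|B_{(h+1)\sigma}(x_i)|=\bigl(2(h+1)\bigr)^n|B_{\sigma/2}(x_i)|$. Summing over $i\in I$ and using subadditivity,
\[
|B_{h\sigma}(S)|\leq\bigl(2(h+1)\bigr)^n\sum_{i\in I}|B_{\sigma/2}(x_i)|.
\]
Since the balls $B_{\sigma/2}(x_i)$ are pairwise disjoint and, as $x_i\in S$, are all contained in $B_\sigma(S)$, the sum on the right is bounded by $|B_\sigma(S)|$. This yields the claim with $C(h,n)=\bigl(2(h+1)\bigr)^n$.

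There is no real obstacle here; the only mildly delicate point is justifying the maximal disjoint collection (and keeping it countable), which is standard. The constant $C(h,n)$ is not optimal but the proof as above is the simplest, and it suffices for all applications invoked in the body of the paper (in particular the uses inside the refinement Lemmas~\ref{l.Linfty} and \ref{l.regolarizzazione rotore}).
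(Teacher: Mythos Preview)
Your proof is correct and follows essentially the same Vitali-type covering argument as the paper: select a maximal $\sigma$-separated (you use $\sigma/2$-disjoint balls, the paper uses $\sigma$-disjoint balls) family of centers in $S$, cover $B_{h\sigma}(S)$ by enlarged balls, and compare volumes. The only difference is the separation scale, which yields your constant $(2(h+1))^n$ versus the paper's slightly sharper $(h+2)^n$; this is immaterial for the applications.
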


	\begin{proof}
		Let $\{x_j\}_{j\in J} \subset S$ be a maximal set such that $|x_i-x_j|\geq 2\sigma$ for every $i,j\in J$, $i\neq j$ (maximal for inclusion: for every $x\in S$ there exists $j\in J$ such that $|x_j-x|<2\sigma$).
		Note that, since $S$ is bounded, the set $J$ is finite.
		By construction, for every point in $B_{h\sigma(S)}$ there exists $j\in J$ such that
		\[
		|x-x_j|\leq \dist(x,S)+2\sigma\leq (h+2)\sigma,
		\]
		i.e.,
		$$
		B_{h\sigma}(S)\subseteq \bigcup_{j\in J} B_{(h+2)\sigma}(x_j)\,.
		$$
		Then, using the fact that the balls $B_{\sigma}(x_j)\cap B_{\sigma}(x_i)=\emptyset$ for every $i,j\in J$, $i\neq j$, we get that
		\begin{align*}
			|B_{h\sigma}(S)|&\leq \sum_{j\in J}|B_{(h+2)\sigma}(x_j)|= (h+2)^n\sum_{j\in J}|B_{\sigma}(x_j)|\\
			&=(h+2)^n|\bigcup_{j\in J} B_{\sigma}(x_j)|\leq (h+2)^n |B_\sigma(S)|.
		\end{align*}
	\end{proof}


	\begin{lemma}\label{l.covering1}
		Let $\mu$ be a measure whose support is contained in the union of balls $\{B_{\rho_i}(x_i)\}_{i\in I}$
		with equi-bounded radii $\rho_i\leq r$ for some $r>0$ for every $i\in I$.
		Then, there exists a constant $\eps_0>0$ with this property:
		for every $R\geq2r$ there exists a subfamily $I'\subset I$ such that
		\begin{itemize}
			\item the balls $\{B_{2R}(x_i)\}_{i\in I'}$ are disjoint,
			\item  $\mu (\bigcup_{i\in I}B_{R}(x_i)) \geq \eps_0 \mu(\R^n)$.
		\end{itemize}
	\end{lemma}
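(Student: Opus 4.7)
The plan is to combine the Besicovitch covering theorem with an elementary graph-colouring argument: Besicovitch supplies a bounded (dimensional) number of disjoint subfamilies whose $R$-enlargements cover $\supp \mu$, and a colouring step then further thins one chosen subfamily so that the $B_{2R}$-balls become pairwise disjoint, at the cost of only a dimensional factor.

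Concretely, the first step is to apply Besicovitch to the family $\{B_{R/2}(x_i)\}_{i\in I}$, which yields $N=N(n)$ subfamilies $\mathcal F_1,\ldots,\mathcal F_N$ of pairwise disjoint balls whose union contains every centre $x_j$, $j\in I$. Denote by $J_k\subset I$ the index set of $\mathcal F_k$. For each $j\in I$ there exist $k$ and $i\in J_k$ with $|x_j-x_i|\leq R/2$; using the hypothesis $\rho_j\leq r\leq R/2$, the triangle inequality yields $B_{\rho_j}(x_j)\subset B_R(x_i)$. In particular
\[
\supp\mu\subset\bigcup_{j\in I}B_{\rho_j}(x_j)\subset\bigcup_{k=1}^N\bigcup_{i\in J_k} B_R(x_i),
\]
so by pigeonhole one can fix an index $k^\ast$ with $\mu\big(\bigcup_{i\in J_{k^\ast}} B_R(x_i)\big)\geq N^{-1}\mu(\R^n)$.

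In the second step I thin $J_{k^\ast}$ so that the corresponding $B_{2R}$ become disjoint. Disjointness of $\{B_{R/2}(x_i)\}_{i\in J_{k^\ast}}$ only gives $|x_i-x_{i'}|\geq R$, whereas one needs $\geq 4R$. Consider then the graph on $J_{k^\ast}$ whose edges join pairs with $|x_i-x_{i'}|<4R$: a standard volume-packing argument (the disjoint balls $B_{R/2}(x_{i'})$ for neighbours $i'$ of a fixed $i$ all fit inside $B_{5R}(x_i)$, giving at most $10^n$ such neighbours) bounds the degree by a dimensional constant $M(n)$. Greedy colouring then partitions $J_{k^\ast}$ into at most $K(n):=M(n)+1$ independent sets, on each of which all pairwise distances are $\geq 4R$ and hence all $B_{2R}$ are pairwise disjoint. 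A final pigeonhole picks an independent set $I'$ with
\[
\mu\Big(\bigcup_{i\in I'} B_R(x_i)\Big)\geq \frac{1}{N(n)K(n)}\,\mu(\R^n),
\]
which proves the lemma with $\eps_0:=1/(N(n)K(n))$.

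The argument is purely combinatorial once Besicovitch is granted, so there is no real obstacle. The only detail worth stressing is that the Besicovitch scale has to be $R/2$ rather than, say, $R$ or $\rho_i$: this is precisely what the hypothesis $R\geq 2r$ is there for, since it guarantees $\rho_j+R/2\leq R$ and hence the inclusion $B_{\rho_j}(x_j)\subset B_R(x_i)$ that is used to cover $\supp\mu$ through the centres of the Besicovitch subfamilies.
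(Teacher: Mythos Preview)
Your argument is correct, but the invocation of Besicovitch is unnecessary overhead compared with the paper's more elementary route. The paper simply selects a maximal subset $J$ of the centres $\{x_i\}_{i\in I}$ with pairwise distances at least $R/2$; maximality immediately gives the covering $\bigcup_{i\in I}B_{\rho_i}(x_i)\subset\bigcup_{i\in J}B_R(x_i)$ via the same inclusion $B_{\rho_j}(x_j)\subset B_{R/2+\rho_j}(x_i)\subset B_R(x_i)$ that you use. The paper then observes directly that the balls $\{B_{4R}(x_i)\}_{i\in J}$ have overlap bounded by a dimensional constant $N$ (the same volume-packing you use in your second step), partitions $J$ into $N$ families with pairwise disjoint $B_{2R}$-balls, and applies a single pigeonhole to obtain $\eps_0=1/N$.

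Your proof reaches the same conclusion but in two stages: Besicovitch first produces $N(n)$ disjoint subfamilies of $B_{R/2}$-balls, and only then do you run the separation/colouring argument on the chosen subfamily $J_{k^\ast}$, costing an extra factor $K(n)$ in the constant. Since the centres in $J_{k^\ast}$ are $R$-separated anyway, your second step is precisely the paper's argument applied to $J_{k^\ast}$ instead of to a maximal $R/2$-net; the Besicovitch step does no real work. Both proofs are correct; the paper's is shorter and yields a slightly better constant.
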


	\begin{proof}
		Let $\{x_i\}_{i\in J}$ be a maximal set of centers such that $|x_i-x_j|\geq R/2$ for every $i,j\in J$.
		It then follows that
		\[
	\supp(\mu) \subset \bigcup_{i\in I}B_{\rho_i}(x_i)\subset \bigcup_{i\in J}B_{R}(x_i).
		\]
		Indeed, if $j\in I\setminus J$, then there exists $i \in J$ such that $x_j\in B_{R/2}(x_i)$ and therefore
		$B_{\rho_j}(x_j) \subset B_{R/2+\rho_j}(x_i)\subset B_{R}(x_i)$ (because $\rho_j\leq R/2$).

		By the choice of the set $J$, there exists a dimensional constant $N\in \N$ such that
		each $x\in \R^n$ belongs to at most $N-1$ balls $B_{4R}(x_i)$ with $i\in J$.
		In particular, we can find $N$ subfamilies $J_k\subset J$ such that
		$J = \cup_{k=1}^N J_k$ and the balls $\{B_{2R}(x_i)\}_{i\in J_k}$ are disjoint for every $k$.
		Therefore, there exists at least one family $I'=J_{\bar k}$ such that
		\[
		\mu (\bigcup_{i\in I'}B_{R}(x_i)) \geq \frac1N\mu(\R^n).
		\]
		The lemma is then proved with $\eps_0=\frac1N$.
	\end{proof}

\section{Whitney-type extension lemma}\label{a.Whitney}
In this subsection we prove the extension result which allows to truncate fields $A$ and to deduce an $L^\infty$ bound in Lemma \ref{l.Linfty}. This result is contained in \cite{LL2}. For the reader's convenience we give the details below.

\begin{lemma}\label{lemma-extension}
Let $W\subseteq\Omega_0$  be open, $A\in C(\Omega_0;\R^{2\times 2})$, and assume that there is an open set  $U\subseteq W$ such that:
\begin{itemize}
\item[(W1)] $B_{\eps}(x) \setminus W \neq \emptyset$ for every $x\in U$,
\item[(W2)] $\Curl A =0$ in the sense of distributions in $B_{6\eps}(U)$,
\item[(W3)] there exists $M>0$ such that for all $x\in B_{6\eps}(U)\setminus W$
\[
\sup_{r\in (0,\eps)} \mean_{B_r(x)} |A| \leq M.
\]
\end{itemize}
Then, there exist a dimensional constant $C>0$ and a field $\tilde A \in C(\Omega_{0}; \R^{2\times 2})$, such that
\begin{gather}
\Curl \tilde A= \Curl A \qquad \textup{in}\; \Omega_{0},\label{e.W-ext-1}\\
\big\{\tilde A \neq A \big\}\subset B_\eps(U),\label{e.W-ext-2}\\
|\tilde A(x)| \leq C M \qquad \forall\; x \in B_\eps(U).\label{e.W-ext-3}
\end{gather}
\end{lemma}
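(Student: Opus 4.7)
The strategy is a classical Whitney-type extension: we modify $A$ inside $W\cap B_\eps(U)$ by replacing it with the gradient of a Lipschitz extension of a local scalar potential of $A$. Since $\Curl A=0$ on $B_{6\eps}(U)$ by (W2), writing the modification as $\tilde A=A+\nabla\phi$ with $\phi$ compactly supported in $B_\eps(U)$ automatically gives both \eqref{e.W-ext-1} and \eqref{e.W-ext-2}, reducing the problem to the design of $\phi$ so that $|A+\nabla\phi|\le CM$ on $B_\eps(U)$.

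\emph{Local potentials and the key Lipschitz estimate.} Letting $r\to 0^+$ in (W3) and using the continuity of $A$, we obtain the pointwise bound $|A(y)|\le M$ for every $y\in B_{6\eps}(U)\setminus W$. For each $x_0\in U$ the ball $B_{6\eps}(x_0)\subseteq B_{6\eps}(U)$ is convex and, by (W2), $A$ is curl-free there, so there exists $u_{x_0}\in C^1(B_{6\eps}(x_0);\R^2)$ with $\nabla u_{x_0}=A$; we normalise it by $u_{x_0}(y_{x_0})=0$ at a point $y_{x_0}\in B_\eps(x_0)\setminus W$ furnished by (W1). A key geometric sub-lemma, built on iterated applications of (W1) along a chain of overlapping $\eps$-balls inside $B_{2\eps}(x_0)$, asserts that any two points $p,q\in B_{2\eps}(x_0)\setminus W$ can be joined by a polygonal path in $B_{6\eps}(x_0)\setminus W$ of length at most $C(|p-q|+\eps)$. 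Integrating $A$ along such a path then yields
\[
|u_{x_0}(p)-u_{x_0}(q)|\le CM\bigl(|p-q|+\eps\bigr),\qquad p,q\in B_{2\eps}(x_0)\setminus W,
\]
and in particular $\|u_{x_0}\|_{L^\infty(B_{2\eps}(x_0)\setminus W)}\le CM\eps$. The McShane Lipschitz extension then produces $\tilde u_{x_0}\in\Lip(B_{2\eps}(x_0);\R^2)$ with $\tilde u_{x_0}=u_{x_0}$ on $B_{2\eps}(x_0)\setminus W$, $\|\nabla\tilde u_{x_0}\|_{\infty}\le CM$ and $\|\tilde u_{x_0}\|_{\infty}\le CM\eps$.

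\emph{Gluing via partition of unity.} Select a Vitali-type subcover $\{B_\eps(x_i)\}_{i\in I}$ of $U$ with $\{B_{\eps/5}(x_i)\}$ pairwise disjoint (hence the doubled family $\{B_{2\eps}(x_i)\}$ of bounded multiplicity) and a subordinate smooth partition of unity $\{\varphi_i\}\subset C_c^\infty(B_{2\eps}(x_i))$ with $\sum_i\varphi_i\equiv 1$ on $B_\eps(U)$ and $|\nabla\varphi_i|\le C/\eps$. A second chaining argument based on (W1) permits normalising the $u_{x_i}$ so that the overlap constants $u_{x_i}-u_{x_j}$ on $B_{2\eps}(x_i)\cap B_{2\eps}(x_j)\setminus W$ are of size at most $CM\eps$, and the same bound passes to $\tilde u_{x_i}-\tilde u_{x_j}$. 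Set
\[
\tilde u:=\sum_i\varphi_i\,\tilde u_{x_i}\ \text{on }B_\eps(U),\qquad \tilde A:=\nabla\tilde u\ \text{on }B_\eps(U),\qquad \tilde A:=A\ \text{on }\Omega_0\setminus B_\eps(U).
\]
Using the identity $\sum_i\nabla\varphi_i\equiv 0$ on $B_\eps(U)$ together with $\tilde u_{x_i}\equiv u_{x_i}$ on the good set, one verifies that $\tilde A-A$ is locally a gradient on $B_{6\eps}(U)$, giving \eqref{e.W-ext-1}; continuity of $\tilde A$ across $\partial B_\eps(U)$ follows because every boundary point is reached in the limit through good points, where $\tilde u_{x_i}\equiv u_{x_i}$, forcing $\tilde A=A$. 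The bound \eqref{e.W-ext-3} is then immediate from $|\nabla\tilde u_{x_i}|\le CM$, $|\tilde u_{x_i}|\le CM\eps$, $|\nabla\varphi_i|\le C/\eps$ and the bounded multiplicity of the covering.

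\emph{Main obstacle.} The critical step is the Lipschitz estimate for the local potential $u_{x_0}$ on $B_{2\eps}(x_0)\setminus W$ in Euclidean distance: although $|\nabla u_{x_0}|=|A|\le M$ on the good set, integrating $A$ along a straight segment that crosses $W$ is useless because $A$ may be enormous inside $W$. The essential input is (W1), which provides a uniform $\eps$-density of good points throughout $U$, and by a short enlargement argument throughout $B_{2\eps}(x_0)$; chaining such good points produces the polygonal paths of controlled length avoiding $W$ that underlie the Lipschitz estimate. Once this connectivity estimate is secured, the remainder is a standard McShane-plus-partition-of-unity construction modeled on the classical Whitney extension theorem.
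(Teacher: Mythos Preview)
Your proposal has a genuine gap precisely at the step you flag as the ``main obstacle''. The chaining sub-lemma --- that any two points $p,q\in B_{2\eps}(x_0)\setminus W$ can be joined by a polygonal path in $B_{6\eps}(x_0)\setminus W$ of controlled length --- does not follow from (W1)--(W3) and is false in general. Condition (W1) only guarantees that $W^c$ is $\eps$-dense in $U$; it says nothing about connectedness. For instance, $W$ could be a thin annulus of thickness $<\eps$ centered at $x_0$: then (W1) holds trivially, but $B_{6\eps}(x_0)\setminus W$ has two components and no path in $W^c$ joins them. Even when paths exist, your length bound $C(|p-q|+\eps)$ gives only $|u(p)-u(q)|\le CM(|p-q|+\eps)$, which is not a Lipschitz estimate and does not feed into McShane.

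The paper bypasses connectivity entirely, and this is the real reason (W3) is stated as a maximal-function bound rather than the weaker pointwise bound $|A|\le M$ you extract from it. The classical fact is: if $u\in C^1$ on a ball $B$ and $\sup_{0<r<\eps}\mean_{B_r(x)}|\nabla u|\le M$ for every $x$ in some subset $K\subset B$, then $u|_K$ is $CM$-Lipschitz in the Euclidean metric, irrespective of the topology of $K$. The proof compares $u(x)$ and $u(y)$ through the ball averages $(u)_{B_r(x)}$, $(u)_{B_{2r}(x)}$, $(u)_{B_r(y)}$ with $r=|x-y|$, using only the Poincar\'e inequality on balls contained in $B$ --- never a path in $K$. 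This is what the paper invokes when it says ``the maximal function of $A$ is bounded in $K$ by $M$, and therefore $u_\ell|_{B_{6\eps}(p_\ell)\cap K}$ is Lipschitz''. With the Lipschitz estimate in hand, the paper then uses a genuine Whitney decomposition of $W$ (not McShane), which has the additional advantage of producing a $C^1$ extension, so that $\tilde A=\nabla\tilde u$ is continuous as the statement requires; a McShane extension would only give $\tilde A\in L^\infty$.
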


\begin{proof}
We fix a maximal set of points $\{p_1,\ldots, p_N\}\subset U$ satisfying $|p_i-p_j|\geq\eps/4$ for $i\neq j$. Hence,
\[
U\subset \bigcup_{\ell=1}^N B_{\frac{\eps}{2}}(p_\ell).
\]
Since $\Curl A=0$ in $B_{6\eps}(U)$, for each $\ell$ we can find a function $u_\ell\in C^1(B_{6\eps}(p_\ell))$ such that $\nabla u_\ell=A$ in $B_{6\eps}(p_\ell)$.
Consider the compact set
\[K=\bigcup_{\ell=1}^N \overline B_{6\eps}(p_\ell)\setminus W.\]
From (W3) we have that the maximal function of $A$ is bounded in $K$  by $M$, and therefore we deduce that $u_\ell\vert_{B_{6\eps}(p_\ell)\cap K}$ is Lipschitz continuous with constant $C M$, with $C>0$ a dimensional constant.

Consider next a Whitney decomposition of $W$, i.e., a family of dyadic closed cubes
with disjoint interior, $Q_j$ such that
\[W=\bigcup_j Q_j \qquad \textup{and}\qquad \frac{1}{3C} \dist(Q_j, W^c)\leq \diam(Q_j)\leq \frac{1}{C} \dist(Q_j, W^c), \qquad C>2.\]
If a cube $Q_j$ intersects $B_{\eps/2}(U)$, then $\dist(Q_j, W^c)<3\eps/2$ because of condition (W1). As a consequence, for any such cube there exists $z_j\in \partial W$ such that $|z_j - x_{Q_j}|=\dist(x_{Q_j}, W^c)$, where $x_{Q_j}$ is the center of the cube $Q_j$, and moreover by triangular inequality $|x_{Q_j}-z_j|<3\eps$.
Let $\{\varphi_j\}$ a partition of unity subordinated to the family $\{\hat Q_j\}$ where $\hat Q_j$ is the dilation of $Q_j$ with fixed center and factor $\frac54$, i.e., $\hat Q_j= x_{Q_j} + \frac54(Q_j-x_{Q_j})$.
By the property of the Whitney decomposition $\hat Q_j\subset W$.
Then for every $\ell$ we set
$$
\tilde u_\ell(x) :=
\begin{cases}
\sum_{j :\  \hat Q_j\cap B_{\eps/2}(p_\ell)\neq\emptyset}\varphi_j(x)[u_\ell(z_j)+ A(z_j)(x-z_j)]& \textrm{if }x\in B_{\eps/2}(p_\ell) \cap  U,\\
u_\ell(x) & \textrm{if }x\in B_{\eps/2}(p_\ell)\setminus U.
\end{cases}
$$
Note that, if $\hat Q_j\cap B_{\eps/2}(p_\ell)\neq\emptyset$, then
\begin{align*}
|x_{Q_j}-p_\ell|&\leq \frac{\eps}{2}+ \frac{\diam(\hat Q_j)}{2}
=  \frac{\eps}{2}+\frac{5\diam(Q_j)}{8}
\leq \frac{\eps}{2}+\frac{5 \dist(Q_j,K)}{8C}\\
&\leq \frac{\eps}{2}+\frac{15\eps}{8C}  < \eps.
\end{align*}
This implies that
\[
|z_{j}-p_\ell|\leq
|z_{j}-x_{Q_j}|+|x_{Q_j}-p_\ell| <4\eps.
\]
Thus, the definition of $\tilde u_\ell$ is well-posed because $u_\ell(z_j)$ is defined.
By Whitney Extension Theorem, $\tilde u_\ell$ is a $C^1$ extension of ${u_\ell}_{\vert_ {B_{\eps/2}(p_\ell)\setminus W}}$ in $ B_{\eps/2}(p_\ell)$.

If $x\in B_{\eps/2}(p_\ell)\cap  B_{\eps/2}(p_m)$, then there exists a constant $c\in \R$ such that $u_\ell=u_m+c$ in $ B_{6\eps}(p_\ell)\cap  B_{6\eps}(p_m)$ because $\nabla u_\ell = \nabla u_m=A$ in $ B_{6\eps}(p_\ell)\cap  B_{6\eps}(p_m)$. In this case it is simple to verify that also the gradients of the extensions coincide.
	Indeed, for every $x\in U$ let $\mathcal{I}_{x}$ be the set of indices of squares $Q_i$ such that  $\varphi_i(x) \neq 0$ (note that $\mathcal{I}_{x}$ is made at most of $4$ squares); then $\sup_{i\in \mathcal{I}_x} \nabla \varphi_i(x) = 0$ and
\begin{align*}
\nabla\tilde u_\ell(x)&
=\sum_{j :\  \hat Q_j\cap B_{\eps/2}(p_\ell)\neq\emptyset}\varphi_j(x) A(z_j)+\nabla \varphi_j(x) [u_\ell(z_j)+ A(z_j)(x-z_j)]\\
&=\sum_{j \in \mathcal{I}_x}\varphi_j(x) A(z_j)+\nabla \varphi_j(x) [u_m(z_j)+c+ A(z_j)(x-z_j)]\\
&=\sum_{j :\  Q_j\cap B_{\eps/2}(p_m)\neq\emptyset}\varphi_j(x) A(z_j)+\nabla \varphi_j(x) [u_m(z_j)+ A(z_j)(x-z_j)]\\
&=\nabla\tilde u_m(x).
\end{align*}
This implies that
\[
\tilde A(x) :=
\begin{cases}
\nabla \tilde u_\ell(x) & \textup{if }x\in B_{\eps/2}(p_\ell)\cap \tilde \Omega, \qquad \ell=1, \ldots, N, \\
A(x) & \textup{if }x \in \tilde \Omega\setminus \bigcup_{\ell}B_{\eps/2}(p_\ell),
\end{cases}
\]
is well-defined. Moreover, by construction $\Curl \tilde A = \Curl A = 0$ in $\bigcup_{\ell}B_{\eps/2}(p_\ell)\cap \tilde \Omega$, thus implying \eqref{e.W-ext-1} and \eqref{e.W-ext-2}.
Moreover, for every $x\in B_{\eps/2}(p_\ell)\cap U$ we have that
\begin{equation}
\begin{split}
|\nabla\tilde u_\ell(x)|\leq & \|A\|_{L^\infty( B_{\eps}(U)\setminus W)} +\left|\sum_{j :\  \hat Q_j\cap B_{\eps/2}(p_\ell)\neq\emptyset}\nabla \varphi_j(x) [u_\ell(z_j)+ A(z_j)(x-z_j)]\right|.
\end{split}
\end{equation}
For $i,j\in \mathcal{I}_x$ such that $\hat Q_j\cap \hat Q_i\neq \emptyset$ we have that
\begin{align*}
\left\vert u_\ell(z_j)+ A(z_j)(x-z_j) -
u_\ell(z_i) \right\vert &
\leq \left\vert u_\ell(z_j) -u_\ell(z_i) \right\vert
+ \left\vert A(z_j)(x-z_j)  \right\vert\\
&\leq CM |z_j-z_i| + C |A(z_j)| |x-z_j|
\\
&\leq C (M + \|A\|_{L^\infty(B_{\eps}(p_\ell)\setminus W)}) \, \diam(Q_j)\qquad \forall\:x\in \hat Q_j,
\end{align*}
where we used that
\begin{align*}
|z_j-z_i| & \leq |z_j-x_{Q_j}| + |x_{Q_j}-x_{Q_i}| +
|x_{Q_i}-z_i|\\
&\leq C \left(\diam(Q_j) + \diam(Q_i)\right) \leq C\diam(Q_j),
\end{align*}
and
\begin{align*}
|x-z_j| & \leq \diam(\hat Q_j)/2+|z_j-x_{Q_j}|
\leq C \diam(Q_j),\qquad \forall\:x\in \hat Q_j,
\end{align*}
since any two intersecting squares have comparable diameters.
Therefore, for every $x\in W\cap B_{\eps/2}(p_\ell)$, let $i\in \mathcal{I}_x$ be fixed and,
using that $\sum_{j\in \mathcal{I}_x} \nabla \varphi_j(x)=0$
because $\sum_{j\in \mathcal{I}_x} \varphi_j=1$ in a neighborhood of $x$,
we can estimate as follows:
\begin{align*}
&\left|\sum_{j \in \mathcal{I}_x}\nabla \varphi_j(x) [u_\ell(z_j)+ A(z_j)(x-z_j)]\right|\\
&\leq
\left|\sum_{j \in \mathcal{I}_x}\nabla \varphi_j(x) [u_\ell(z_j)+ A(z_j)(x-z_j) - u_\ell(z_i)]\right|\\
&\leq \sum_{j \in \mathcal{I}_x} \|\nabla \varphi_j(x)\|_\infty
C (M + \|A\|_{L^\infty(B_{\eps}(U)\setminus W)}) \diam(Q_j)\leq C(M + \|A\|_{L^\infty(B_{\eps}(U)\setminus W)}).
\end{align*}
Joining the two estimate above we conclude that
$$
\|\nabla \tilde u_\ell\|_{ L^\infty(B_{\eps/2}(p_\ell)\cap W)}\leq C (M + \|A\|_{L^\infty(B_{\eps}(U)\setminus W)})\leq CM.
$$
Therefore, since $u_\ell\vert_{B_{\eps/2}(p_\ell)\setminus W}$ is $CM$-Lipschitz, we conclude that the $C^1$ function $\tilde u_\ell$ is Lipschitz with constant $CM$ and therefore
\[
\|\tilde A\|_{L^\infty(B_{6\eps}(U)\cap \tilde\Omega)} \leq C M.
\]
\end{proof}


\begin{thebibliography}{99}
\bibitem{Alberti-Choksi-Otto}
Alberti, G.; Choksi, R.; Otto, F.
\textit{Uniform energy distribution for an isoperimetric problem with long-range interactions.}
J. Amer. Math. Soc. 22 (2009), no. 2, 569--605.


\bibitem{Alicandro-DeLuca-Garroni-Ponsiglione} Alicandro, R.; De Luca, L.;
Garroni, A.; Ponsiglione, M. \textit{Metastability and dynamics of discrete topological singularities in two dimensions: a $\Gamma$-convergence approach}. Arch. Ration. Mech. Anal. 214 (2014), no. 1, 269–330.

\bibitem{Bellettini-Chambolle-Goldman} Bellettini, G., Chambolle, A., Goldman, M. \textit{The $\Gamma$-limit for singularly perturbed functionalsof Perona-Malik type in arbitrary dimension}
Mathematical Models and Methods in Applied Sciences Vol. 24, No. 6 (2014) 1091–1113

\bibitem{Conti-Garroni} Conti, S.; Garroni, A. \textit{Sharp rigidity estimates for incompatible fields as a consequence of the Bourgain Brezis div-curl result}. C. R. Math. Acad. Sci. Paris 359 (2021), 155–160.

\bibitem{DeLuca-Garroni-Ponsiglione} De Luca, L.; Garroni, A.; Ponsiglione, M. \textit{$\Gamma$-convergence analysis of systems of edge dislocations: the self energy regime}. Arch. Ration. Mech. Anal. 206 (2012), no. 3, 885–910.

\bibitem{DeLuca-Ponsiglione-Spadaro} De Luca, L.; Ponsiglione, M.; Spadaro, E. \textit{Two slope functions minimizing fractional seminorms and applications to misfit dislocations}.
Preprint arXiv:2207.04741

\bibitem{Esedoglu} Esedoglu, S. \textit{Grain size distribution under simultaneous grain boundary migration and grain rotation in two dimensions}, Comput. Mater. Sci., 121 (2016), pp. 209--216.

\bibitem{Garroni-Leoni-Ponsiglione} Garroni, A.; Leoni, G.; Ponsiglione, M. \textit{Gradient theory for plasticity via homogenization of discrete dislocations.} J. Eur. Math. Soc. (JEMS) 12 (2010), no. 5, 1231–1266.


\bibitem{Giuliani-Theil} Giuliani, A.; Theil, F.  \textit{Long range order in atomistic models for solids.} J. Eur. Math. Soc. (JEMS) 24 (2022), no. 10, 3505–3555.


\bibitem{Howe} Howe, J.  \textit{Interfaces in materials: atomic structure, thermodynamics and kinetics of solid-vapor, solid-liquid and solid-solid interfaces}, Wiley-Interscience (1997)


\bibitem{LL2} Lauteri, G., Luckhaus S. \textit{An Energy Estimate for Dislocation Configurations and the Emergence of Cosserat-Type Structures in Metal Plasticity},  arXiv:1608.06155v2.

\bibitem{Luckhaus-Mugnai} Luckhaus, S.; Mugnai, L. \textit{On a mesoscopic many-body Hamiltonian describing elastic shears and dislocations.} Contin. Mech. Thermodyn. 22 (2010), no. 4, 251–290.


\bibitem{MSZ} Müller, S.; Scardia, L.; Zeppieri, C. I. \textit{Geometric rigidity for incompatible fields, and an application to strain-gradient plasticity}. Indiana Univ. Math. J. 63 (2014), no. 5, 1365–1396.

\bibitem{Ponsiglione} Ponsiglione, M. \textit{Elastic energy stored in a crystal induced by screw dislocations: from discrete to continuous}. SIAM J. Math. Anal. 39 (2007), no. 2, 449–469.

\bibitem{Read-Shockley} Read, W.T.; Shockley, W.
\textit{Dislocation models of crystal grain boundaries}
(1950) Physical Review, 78 (3), pp. 275 - 289

\bibitem{Spadaro}
Spadaro, E.
\textit{Uniform energy and density distribution: diblock copolymers’ functional.}
Interfaces and free boundary 11 (3), 447-474.

\bibitem{Stein} Stein, E. M. \textit{Singular integrals and differentiability properties of functions}. Princeton Mathematical Series, No. 30 Princeton University Press, Princeton, N.J. 1970 xiv+290 pp.

\bibitem{VdM} van der Merwe, J.
\textit{On the stresses and energies associated with inter-crystalline boundaries.} Proc. Phys. Soc. A 63 (1950), 616--637.


\end{thebibliography}
\end{document}